\crefname{hypothesis}{Hypothesis}{Hypotheses}
\title{Optimal maximum norm estimates for virtual element methods}
\author{Wen-ming He\thanks{Department of Mathematics, Lingnan Normal University,
Zhanjiang, Guangdong 524000,  P. R. China (\email{he\_wenming@aliyun.com}) }
\and 
Hailong Guo\thanks{Corresponding author. School of Mathematics and Statistics,  The University of Melbourne,  Parkville, VIC 3010, Australia   
  (\email{hailong.guo@unimelb.edu.au}),}
}
\begin{document}

\maketitle

\begin{abstract}
The maximum norm error estimations for virtual element methods are studied. To establish the error estimations, we prove higher local regularity based on delicate analysis of Green's functions and high-order local error estimations for the partition of the virtual element solutions.  The maximum norm of the exact gradient and the gradient of the projection of the virtual element solutions are proved to achieve optimal convergence results. For high-order virtual element methods, we establish the optimal convergence results in $L^{\infty}$ norm. Our theoretical discoveries are validated by a numerical example on general polygonal meshes. 
 
\end{abstract}

\begin{keywords}
  virtual element method, maximal error estimate, Green's function
  \end{keywords}

\begin{AMS}
 65N30, 65N25,65N15.  
\end{AMS}

\section{Introduction}
\label{sec:int}

%
%
%
%
%

In recent years, there has been a surge of interest in developing numerical methods for numerically solving partial differential equation using general polygonal/polyhedral meshes.  
The construct of finite element shape functions on convex polygons was first articulated by Wachspress (1971) \cite{Wa1971} and popularized in his book: A Rational Finite Element Basis \cite{Wa1975}.
Since that,  considerable literature has grown up around the theme of developing finite element/difference methods using general polygons/polyhedra.     Famous  examples include
  the  polygonal finite element methods\cite{ST2004, SM2006},   mimetic finite difference methods\cite{BLM2014,Sh1996,KR2003, HS1999, SS1996},  hybrid high-order methods\cite{DEL2014, DE2015, Lema2020},  polygonal discontinuous Galerkin methods \cite{MWWY2014}, etc.  The interesting readers are referred to  \cite{MRS2014} for the recent review.

  Virtual element methods were originated from developing higher-order mimetic finite difference methods\cite{BBL2009,BLM2011}  using the framework of finite element methods.  The key idea is to use non-polynomial basis functions which are similar to the polygonal finite element methods\cite{ST2004} or the extended/generalized finite element methods\cite{FrBe2010, BaOs1983}. Different from other numerical methods using general polygons/polyhedrons,  virtual element methods preserve the form of classical finite element methods on simplexes while use general polygons/polyhedrons. The beauty of virtual element methods is that the non-polynomial basis functions are never explicitly constructed (or needed). The local stiffness (or mass) matrix is assembled only using the polynomial basis functions and suitable projections \cite{AHMAD2013}. This capability allows virtual element methods to handle more general continuity requirements like $C^r$ continuity with $r\ge 1$. The usage of polygonal/polyhedral meshes makes virtual element methods handle hanging nodes naturally and simplifies the procedure of adaptive mesh refinement.

  Virtual element method was firstly proposed by Beir\~{a}o da Veiga etc. In their seminal study \cite{BBCMMR2013},  Beir\~{a}o da Veiga etc. used the Poisson equation to illustrate the abstract framework of constructing and analyzing arbitrary order virtual element methods using a local $H^1$-projector. Within the abstract framework, the virtual element method is proven to be convergent optimally in both the $H^1$ norm and $L^2$ norm.  In a study conducted by Ahmad etc.\cite{AHMAD2013}, it was shown that a local $L^2$-orthogonal projector can be easily computed using the local $H^1$-projector by slightly changing the local virtual element space (or non-polynomial basis functions), which may facilitate the treatment of lower order terms and $L^2$ error analysis.  In \cite{BBMR2016}, Beir\~{a}o Da Veiga etc. investigated the virtual element methods for general second-order elliptic equations with variable coefficients. 
  Thereafter,  there has been a lot of study of virtual element for other equations, just to name a few, see\cite{BBMR2016,CMS2017,ADLP2017,BM2013,  CGM2016, BBDMR2018, ZZMC2019
  }.  In the most recent work \cite{GuXZ2019}, Guo etc. examined the superconvergence property of the linear virtual element method and its corresponding post-processing technique.

The greater part of the literature on virtual element methods seems to have been based on the optimal error estimate in the energy norm. 
Up to now, there are only limited studies focusing on the analysis of the maximal norm error for virtual element methods in the literature.  The first attempts on maximal norm error estimation for virtual element methods was presented by Brenner and Sung in \cite{BS2018}, where a suboptimal $L^{\infty}$ error estimate was obtained.   To our best knowledge,   there is no optimal maximal norm error estimation for virtual element methods. In contrast, the maximal norm error estimates \cite{ScWa1978, HeZZ2018} for classical numerical methods like finite element methods\cite{BS2008, Ci2002} and finite volume methods\cite{ZhZo2015}  have been  in the maturity stage.

The main purpose of this paper is to establish the optimal maximum norm estimates for virtual element methods under the general setting.  For the maximum norm estimates for the classical numerical methods, the main ingredient is the inverse error estimate \cite{BS2008, Ci2002}. However, for the virtual element solutions, they are no longer piecewise polynomials even though we don't explicitly  construct their non-polynomial parts.  To overcome the difficulty for maximum error for gradients, we consider the gradient of projection of the virtual element solutions that are actually  what we can compute and use the inverse estimate for polynomials on general polygonal domain\cite{DPDR2020}.  For the maximum norm error for functions values, we bypass those difficulties by establishing an inverse error estimate using only the maximum principle for $H^1$ functions \cite{BGS2017}.  Our error estimations use the regularity results based on the delicate estimation of Green's functions.  By using a special partition of unity, we can prove the high-order local error estimates.  It is worth pointing out that the high-order local $L^2$ error works except linear virtual element method because of the $2k$ conjecture  \cite{ChHu2013}.  The established high-order local error estimates pave the way for our optimal maximum norm error estimates.

The rest of the paper is organized as follows. The model problem and its related Sobolev spaces are introduced in Section \ref{sec:model}.  The construct of virtual element methods is given in Section \ref{sec:vem}. Then,  the presentation and proof  our main results  are given  in Section \ref{sec:main}.   Two numerical examples are provided to verify our theoretical results  in Section 6.  Finally, some conclusion is drawn in Section \ref{sec:con}.

\section{Model problem}
\label{sec:model}
In this paper, the standard notation for Sobolev spaces as in \cite{BS2008, Ci2002, Evans2008} will be adopted. 
Let $\Omega\subset \mathbb{R}^2$ be a bounded polygonal domain with Lipschitz boundary $\partial\Omega$. 
For any subdomain  $\mathcal{D}$ of $\Omega$ and $1\le p \le \infty$,  let $W^{k,p}(\mathcal{D})$ denote the Sobolev space with norm
$\|\cdot\|_{W^{k, p}(\mathcal{D})} $ and seminorm $|\cdot|_{W^{k, p}(\mathcal{D})}$.
When $p=2$,  $W^{k,2}(\mathcal{D})$ is abbreviated as  $H^{k}(\mathcal{D})$ and $p$ is omitted from its norms.
Similarly,  $W^{0,p}(\mathcal{D})$ is abbreviated as $L^{p}(\mathcal{D})$.
 $(\cdot, \cdot)_{\mathcal{D}}$ denotes the standard $L^2$ inner product in $\mathcal{D}$ and the subscript
 is ignored when $\mathcal{D} = \Omega$.  Let $\mathbb{P}_k(\mathcal{D})$ denote  the space of polynomials of degree less than or equal to $k$ over $\mathcal{D}$.  In this paper, the letter $C$ or $c$ denotes a generic positive constant which may be different at different occurrences. 
 We will use $x \lesssim y$ to denote $x \le Cy$ for some constant $C$ independent of the mesh size.

We take the following second order elliptic equation as our model equation
\begin{equation}
\left\{
\begin{split}
  -\nabla\cdot (\alpha \nabla  u) =f, &\text{    in }\Omega,\\
    u=0, & \text{    on }\partial\Omega,
\end{split}	
\right.
\label{equ:model}
\end{equation}
where  the diffusion coefficient matrix $\alpha$ is a constant coefficient matrix  satisfying that   $ \alpha_0 \|\mathbf{x}\|^2 \le \mathbf{x}^T \alpha \mathbf{x} \le \alpha_1 \|\mathbf{x}\|^2  $ for some positive constant $\alpha_0$ and $\alpha_1$.  To simplify the notation, we denote the differential operator as $\L := -\nabla\cdot (\alpha \nabla). $

The variational formulation of \eqref{equ:model}  is to find  $u\in H^1_0(\Omega)$ such that
\begin{equation}\label{equ:varform}
a(u,  v) = (f, v), \quad \forall v\in H^1_0(\Omega),
\end{equation}
where  the bilinear form $a(\cdot, \cdot)$ is defined to be 
\begin{equation}\label{equ:bilinear}
a(w,v) = (\alpha\nabla w, \nabla v). 
\end{equation}
The boundedness of the coefficient matrix $\alpha$ and the Lax-Milgram  theorem imply the variational problem \eqref{equ:varform} has a unique solution $u$.

\section{Virtual element methods}
\label{sec:vem}
One of the important  merits of  virtual element methods is that they allow for general polygonal partitions of the computational domain $\Omega$. 
 Suppose   $\mathcal{T}_h$,   consisting of non-overlapping polygonal elements $E$, is a partition of $\Omega$.
For any element $E\in \mathcal{T}_h$, its diameter is denoted by
$h_E$.  The largest element diameter  of $\mathcal{T}_h$ is denoted by $h$.   Without  loss of generality,   we assume that there exists $\rho \in (0,1)$ such that
 the mesh $\mathcal{T}_h$
satisfies the following two assumptions \cite{BGS2017, BBMR2016}:
\begin{itemize}
\item [(i).]  every element $E$ is star-shaped with respect to every point of a disk $D$ of radius $\rho h_E$;
\item [(ii).] every edge $e$ of $E$ has length $|e| \ge \rho h_E$.
\end{itemize}

For a polygonal element $E$,  we denote the barycenter of $E$ by $x_{E}$.  For any nonnegative integer $r\in\mathbb{N}$,  let $\mathcal{M}_r(E)$ be the set of polynomials defined as
\begin{equation}\label{equ:mom}
\mathcal{M}_r(E):=\left\{m|m=\left(\frac{\mathbf{x}-\mathbf{x}_E}{h_E}\right)^{\mathbf{s}} \text{ for }  |\mathbf{s}|\le r\right\},
\end{equation}
with $\mathbf{s}=(s_1,s_2)\in\mathbb{N}^2$ being the multi-index and $|\mathbf{s}|=s_1+s_2$. We can check that $\mathcal{M}_r(E)$ is a basis for the space of polynomials of degree $\le r$ on $E$.  Furthermore, we define the subspace $\mathcal{M}^*_r(E)$  of $\mathcal{M}_r(E)$  as 
\begin{equation}\label{equ:mmom}
\mathcal{M}^*_r(E):=\left\{m|m=\left(\frac{\mathbf{x}-\mathbf{x}_E}{h_E}\right)^{\mathbf{s}} \text{ for }  |\mathbf{s}| = r \right\}.
\end{equation}
 To define the virtual element space, we begin with defining the
local virtual element spaces on each element. For any positive integer $k${\color{red},}  let
\begin{equation}
\mathbb B_{k}(\partial E):=\{v\in C^0(\partial E):v|_e\in\mathbb P_k(e),\quad \forall e\in \partial E\}.
\end{equation}
Then, the enlarged local virtual element space $V(E)$\cite{AHMAD2013} on the element $E$ can be defined as
\begin{equation}
V_k(E)=\{v\in H^1(E):v|_{\partial E}\in \mathbb B_{k}(\partial E),\quad\Delta v|_E \in \mathbb{P}_{k-2}(E) \},
\end{equation}
 with $\mathbb{P}_{-1}(E) = \{ 0\}$.

The soul of virtual element methods is that the non-polynomial basis functions are never explicitly constructed  and needed.  This is made possible
by introducing the  projection operator $\Pi^{\nabla}_k$.  For any function $v^{h} \in V(E)$,  its projection  $\Pi^{\nabla}_kv^{h}$ is defined to satisfy the following
orthogonality:
\begin{equation}\label{equ:orthproj}
(\nabla p, \nabla (\Pi_k^{\nabla} v^{h} - v^{h}))_{ E} = 0, \quad \forall p\in \mathbb{P}_k(E),
\end{equation}
plus (to take care of the constant part of $\Pi_k^{\nabla}$):
\begin{equation}
\int_{\partial E}(\Pi_k^{\nabla} v^{h}-v^{h})ds=0, \quad \text{ for } k = 1,
\end{equation}
or 
\begin{equation}
\int_{ E}(\Pi_k^{\nabla} v^{h}-v^{h})d\mathbf{x}=0, \quad \text{ for } k \ge 2.
\end{equation}

The modified local virtual element space  \cite{AHMAD2013} is defined as
\begin{equation}
W_k(E)=\{v^{h}\in V_k(E): (v^{h}- \Pi_k^{\nabla}v^{h}, q^*) = 0, \, \forall q^*\in  \mathcal{M}_{k-1}^*(E)\cup\mathcal{M}_k^*(E)\}.
\end{equation}
Then,  the  virtual element space  \cite{AHMAD2013, BBMR2014} is
\begin{equation}
V_{h}=\{v\in H^1(\Omega):v|_{ E}\in W_k(E), \,\forall E\in \mathcal{T}_h\}.
\end{equation}
Furthermore, let $V_{h,0} = V_{h}\cap H^1_0(\Omega)$
be the subspace of $V_{h}$ with homogeneous boundary condition.

Similarly, we can define the $L^2$ projection $\Pi_k^0$  as
\begin{equation}\label{equ:l2proj}
( p, \Pi_k^{0} v^{h} - v^{h})_{ E} = 0, \quad \forall p\in \mathbb{P}_k(E).
\end{equation}
For the linear virtual element method\cite{AHMAD2013, BBMR2014}, these two projections are equivalent, i.e. $\Pi_k^{\nabla} =\Pi_k^0$.

On each element $E\in \mathcal{T}_h$, we can define the following discrete bilinear form
\begin{align}
a^E_h(u^{h},v^{h})=(\alpha\nabla \Pi_k^{\nabla} u^{h}, \nabla \Pi_k^{\nabla} v^{h})_E+S^E(u^{h}-\Pi_k^{\nabla} u^{h}, v^{h}-\Pi_k^{\nabla} v^{h})
\end{align}
for any $ u^{h},v^{h}\in V(E)$.  The discrete bilinear form $S^E$ is symmetric,   positive definite, and continuous, which is also fully computable using only
the degrees of freedom of $u^{h}$.    The readers are referred to \cite{AHMAD2013,BBCMMR2013, BBMR2014} for the detail definition of $S^E$,  which is selected to make $a^E_h(\cdot, \cdot)$ satisfy 
\begin{itemize}
	\item $k$-Consistency: For  all $p\in\mathbb{P}_k(E)$ and all $v_h\in W_k(E)$,
	\begin{equation}
		a_h^E(p, v_h) = a^E(p, v_h),
	\end{equation}
	where $a^E(u,v) = \int_{E}\alpha\nabla u\cdot \nabla v d\mathbf{x}$. 
	\item Stability:  There exist two positive constants $\alpha_*$ and $\alpha^*$, independent of $h$ and $E$, such that 
	\begin{equation}
		\alpha_*a^E(v_h,v_h)\le a_h^E(v_h, v_h) \le \alpha^* a^E(v_h, v_h), \quad \forall v_h \in W_k(E).
	\end{equation}
\end{itemize}

Then, we can define the  discrete bilinear form $a_h(\cdot, \cdot)$:
\begin{align}
a_h(u^h,v^{h})=\sum_{E\in\mathcal{T}_h}a^E_h(u^h,v^{h}),
\end{align}
for any $ u^h,v^{h}\in V_{h}$.
The virtual element method for the model  problem \eqref{equ:model} is to  find $u^h \in V_{h, 0}$ such that
\begin{equation}\label{equ:vem}
a_h(u^h,v^{h})=(f,\Pi_{k-1}^0 v^{h}),\quad \forall v^{h} \in V_{h,0}.
\end{equation}

Assume $\chi_1, \cdots, \chi_{N_E}$ are the basis functions of the dual space of $W_k(E)$.  Define the local interpolation  $I_{h,E}w \in W_k(E)$  of a smooth enough function $w$ as a 
\begin{equation}
	\chi_i(w-I_{h,E}w) = 0.
\end{equation}
The global interpolation operator $I_h: C^0(\Omega)\rightarrow V_h$ is defined as
\begin{equation}\label{equ:interpolation}
	I_hw|_{E} := I_{h,E}w,\quad  \forall w\in C^0(\Omega). 
\end{equation}

For the virtual element method \eqref{equ:vem}, \cite{AHMAD2013, BBCMMR2013, BBMR2016} established the following convergence results in $L^2$ and $H^1$ norms.
\begin{theorem} \label{thm:h1error}
 Let $u$ be the solution to the problem \eqref{equ:model}, and
let  $u_{h}\in V_{h,0}$  be the solution of the discretized problem. Assume further that $\Omega$ is convex, and that the exact
solution $u(x)$  belongs to $H^{k+1}(\Omega)$. Then the  following estimate holds:
\begin{align}\label{equ:homoerror}
h\|u-u^h\|_{H^{1}(\Omega)} + \|u-u^h\|_{L^2(\Omega)}\lesssim h^{k+1}\|u\|_{H^{k+1}(\Omega)}.
\end{align}
\end{theorem}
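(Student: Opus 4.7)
The plan is to use a Strang-type argument for the $H^1$ estimate and an Aubin--Nitsche duality argument for the $L^2$ estimate, since this is the classical road map from \cite{BBCMMR2013, AHMAD2013, BBMR2016}. First, I would split the error as $u-u^h = (u-I_h u) + (I_h u - u^h)$, where $I_h u$ is the virtual element interpolant defined in \eqref{equ:interpolation}. The term $\|u-I_h u\|_{H^1(\Omega)}$ is handled by the standard interpolation estimate $\|u-I_h u\|_{H^1(\Omega)} \lesssim h^k \|u\|_{H^{k+1}(\Omega)}$ (which is the virtual element analogue of finite element interpolation and is proved element-by-element by combining the boundedness of $I_{h,E}$ on $H^1(E)$ with a Bramble--Hilbert argument using the mesh regularity assumptions (i)--(ii)).

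For the discrete error $e_h := I_h u - u^h \in V_{h,0}$, I would test the scheme against $e_h$ and use the stability constant $\alpha_*$ to obtain
\begin{equation*}
\alpha_* \, a(e_h,e_h) \le a_h(e_h,e_h) = a_h(I_h u, e_h) - (f,\Pi^0_{k-1} e_h).
\end{equation*}
On each element $E$, I would introduce a local polynomial approximant $u_\pi \in \mathbb{P}_k(E)$ to $u$ and use the $k$-consistency property to rewrite $a_h^E(I_h u, e_h) = a_h^E(I_h u - u_\pi, e_h) + a^E(u_\pi, e_h)$. Combining with the continuous equation $a(u,e_h) = (f,e_h)$ yields the standard three-term decomposition: a stability term controlled by $a_h^E(I_h u - u_\pi, I_h u - u_\pi)^{1/2}$ (bounded by $h^k \|u\|_{H^{k+1}(E)}$ via the triangle inequality, stability, interpolation and polynomial approximation), a polynomial best-approximation term $a^E(u_\pi - u,e_h)$, and a data-oscillation term $(f, e_h - \Pi^0_{k-1} e_h)$. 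Using Cauchy--Schwarz, the $L^2$-orthogonality of $\Pi^0_{k-1}$, and absorbing $\|e_h\|_{H^1(\Omega)}$ gives $\|e_h\|_{H^1(\Omega)} \lesssim h^k \|u\|_{H^{k+1}(\Omega)}$, which combined with the interpolation bound delivers the $H^1$ half of \eqref{equ:homoerror}.

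For the $L^2$ estimate I would run an Aubin--Nitsche duality argument. Let $\phi \in H^1_0(\Omega) \cap H^2(\Omega)$ solve $\mathcal{L}\phi = u - u^h$ with $\|\phi\|_{H^2(\Omega)} \lesssim \|u-u^h\|_{L^2(\Omega)}$ by elliptic regularity (convex $\Omega$). Then
\begin{equation*}
\|u-u^h\|_{L^2(\Omega)}^2 = a(u-u^h,\phi) = a(u-u^h,\phi - I_h\phi) + a(u-u^h, I_h\phi).
\end{equation*}
The first summand is bounded by the already-established $H^1$ estimate times $\|\phi - I_h\phi\|_{H^1(\Omega)} \lesssim h \|\phi\|_{H^2(\Omega)}$. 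For the second summand I would manipulate it as $[a(u,I_h\phi) - (f,\Pi^0_{k-1} I_h\phi)] + [a_h(u^h,I_h\phi) - a(u^h,I_h\phi)] + (f, \Pi^0_{k-1}I_h\phi - I_h\phi)$, using the scheme \eqref{equ:vem} and the consistency/stability of $a_h^E$ to bound each bracket by $h^{k+1}\|u\|_{H^{k+1}(\Omega)}\|\phi\|_{H^2(\Omega)}$ after inserting polynomial projections of $u$ and $\phi$ and exploiting the orthogonalities \eqref{equ:orthproj} and \eqref{equ:l2proj}.

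The main obstacle is the second duality bracket, where $u^h$ is non-polynomial and one cannot simply invoke Galerkin orthogonality as in the finite element case; one must carefully insert polynomial projections on both arguments and use the $k$-consistency of $a_h^E$ together with the enhanced orthogonality built into $W_k(E)$ so that the low-order moments of $v^h - \Pi^\nabla_k v^h$ vanish. This is exactly the mechanism that promotes the $L^2$ rate from $h^k$ to $h^{k+1}$ and is the subtle point that requires the modified virtual element space of \cite{AHMAD2013}.
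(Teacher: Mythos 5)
Your proposal is essentially the standard Strang-plus-duality argument of the cited references \cite{BBCMMR2013, AHMAD2013, BBMR2016}; the paper itself offers no proof of Theorem \ref{thm:h1error} but simply quotes it from exactly those works, so your route coincides with the one the paper relies on. The sketch is correct, including the key point that the enhanced space $W_k(E)$ and the orthogonality of $\Pi^0_{k-1}$ are what recover the optimal $h^{k+1}$ rate in the duality step (noting that $f=\L u\in H^{k-1}(\Omega)$ with $\|f\|_{H^{k-1}(\Omega)}\lesssim\|u\|_{H^{k+1}(\Omega)}$ for the constant-coefficient operator, so the data-oscillation terms are absorbed into the stated bound).
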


In the proof of maximum norm error estimate, we  shall introduce  second-order elliptic equations with inhomogeneous boundary condition. For the inhomogeneous boundary value problem, we have the following error estimate.

\begin{theorem} \label{thm:inhomo}
 Assume $\phi\in H^1(\Omega)$ is the solution for the following second-order elliptic equation with inhomogeneous boundary condition:
 \begin{eqnarray}
\begin{cases}
\L\phi\equiv -\nabla\cdot(\alpha\nabla \phi(\mathbf{x}))=f(\mathbf{x}),\quad &\text{in}\quad \Omega,\\
\phi(\mathbf{x})=g(\mathbf{x}), &\text{on}\quad \partial\Omega.\label{equation1.1tt}
\end{cases}
\end{eqnarray}

We further assume that $\partial\Omega$ is smooth enough such that $\phi\in H^{k+1}(\Omega)$ and 
  $g\in  H^{k+\frac{1}{2}}(\partial\Omega)$. Let $\phi^h$ be the $k$th order virtual element approximation of $\phi$, there holds
 \begin{equation}\label{equ:inhomoerror}
 	h\|\phi-\phi^h\|_{H^{1}(\Omega)} + \|\phi-\phi^h\|_{L^2(\Omega)}\lesssim h^{k+1}
 	(\|\phi\|_{H^{k+1}(\Omega)}+\|g\|_{H^{k+\frac{1}{2}}(\partial\Omega)}).
 \end{equation}
\end{theorem}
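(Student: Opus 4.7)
The plan is to reduce the inhomogeneous boundary value problem to the homogeneous situation already handled in Theorem~\ref{thm:h1error} by introducing a lifting of the boundary data. Under the assumed regularity of $\partial\Omega$, the trace theorem supplies an extension $\tilde g\in H^{k+1}(\Omega)$ of $g$ with $\|\tilde g\|_{H^{k+1}(\Omega)}\lesssim \|g\|_{H^{k+\frac12}(\partial\Omega)}$. Setting $\phi=\tilde g+\phi_{0}$, the residual $\phi_{0}:=\phi-\tilde g\in H^{1}_{0}(\Omega)$ satisfies $\L\phi_{0}=f-\L\tilde g$ with homogeneous Dirichlet data, and the identity $\phi_{0}=\phi-\tilde g$ immediately gives
\[\|\phi_{0}\|_{H^{k+1}(\Omega)}\lesssim \|\phi\|_{H^{k+1}(\Omega)}+\|g\|_{H^{k+\frac12}(\partial\Omega)}.\]

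On the discrete side I would mirror this decomposition by writing $\phi^{h}=\phi_{0}^{h}+I_{h}\tilde g$, where $I_{h}\tilde g\in V_{h}$ is the VEM interpolant defined in \eqref{equ:interpolation} and $\phi_{0}^{h}\in V_{h,0}$ is the solution of
\[a_{h}(\phi_{0}^{h},v^{h})=(f,\Pi_{k-1}^{0}v^{h})-a_{h}(I_{h}\tilde g,v^{h}),\qquad \forall v^{h}\in V_{h,0}.\]
This modified problem is precisely the VEM discretization of a homogeneous Dirichlet problem whose continuous source is $f-\L\tilde g$, so a direct transcription of the proof of Theorem~\ref{thm:h1error}, combined with a Strang-type treatment of the consistency error generated by the extra right-hand side term $a_{h}(I_{h}\tilde g,\cdot)$, will yield
\[\|\phi_{0}-\phi_{0}^{h}\|_{L^{2}(\Omega)}+h\|\phi_{0}-\phi_{0}^{h}\|_{H^{1}(\Omega)}\lesssim h^{k+1}\bigl(\|\phi_{0}\|_{H^{k+1}(\Omega)}+\|\tilde g\|_{H^{k+1}(\Omega)}\bigr).\]

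The lifting contribution is controlled by the standard VEM interpolation estimate available under the mesh assumptions (i)--(ii), namely
\[\|\tilde g-I_{h}\tilde g\|_{L^{2}(\Omega)}+h\|\tilde g-I_{h}\tilde g\|_{H^{1}(\Omega)}\lesssim h^{k+1}\|\tilde g\|_{H^{k+1}(\Omega)}\lesssim h^{k+1}\|g\|_{H^{k+\frac12}(\partial\Omega)}.\]
A triangle inequality combining the last two displays with the regularity bound for $\phi_{0}$ then delivers \eqref{equ:inhomoerror}.

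The step I expect to be most delicate is the verification that the extra term $a_{h}(I_{h}\tilde g,v^{h})$ produces only an $\mathcal{O}(h^{k+1})$ perturbation in $L^{2}$, rather than the $\mathcal{O}(h^{k})$ that a naive use of coercivity would give. This hinges on a Strang-type estimate in which the $k$-consistency and stability of $a_{h}^{E}$ replace $a_{h}(I_{h}\tilde g,v^{h})$ by $a(\tilde g,v^{h})$ up to a residual controlled by $\|\tilde g-I_{h}\tilde g\|_{H^{1}}$ on each element. Once the $H^{1}$ estimate is secured, an Aubin--Nitsche duality argument analogous to the one underpinning Theorem~\ref{thm:h1error} upgrades it to the sharp $L^{2}$ bound; the dual problem has right-hand side $\phi_{0}-\phi_{0}^{h}\in L^{2}(\Omega)$ and, by the assumed smoothness of $\partial\Omega$, its solution inherits the $H^{2}$-regularity needed for the argument to close.
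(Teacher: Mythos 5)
Your proposal is correct and follows essentially the same route as the paper, whose proof of this theorem consists precisely of invoking the homogeneous estimate of Theorem~\ref{thm:h1error} together with the standard lifting argument that you spell out (extension $\tilde g$ of $g$, reduction to a homogeneous problem for $\phi-\tilde g$, interpolation of the lifting, and a Strang/duality treatment of the discrete side). Your version simply makes explicit the details the paper delegates to the cited references.
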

\begin{proof}
  The error estimate \eqref{equ:inhomoerror} can be established by the optimal error estimate for homogeneous problems \eqref{equ:homoerror} and the standard lift argument \cite{Ci2002, BS2008, StFi2008}.
\end{proof}

\section{Main results}
\label{sec:main}
Suppose the domain $\Omega$ is convex. The main result of this paper is the following theorem.
\begin{theorem} \label{thm:maxerror}
Let $u$ be the solution of \eqref{equ:varform} and $u^h$ be its virtual element  solution.  If $u\in W^{k+1,\infty}(\Omega)$ and $f\in H^k(\Omega)$, then the following result holds
    \begin{equation}\label{equ:h1max}
      \|\nabla u-\nabla \Pi^{\nabla}_k u_{h}\|_{L^{\infty}( \Omega)} \lesssim h^{k}|\ln h|^{2}(\|u\|_{W^{k+1,\infty}(\Omega)}+\|f\|_{H^{k}(\Omega)}).
    \end{equation}
   Furthermore, if we assume $k\ge 2$, then we have 
   \begin{equation}   \label{equ:l2max}
     \|u-u^h\|_{L^{\infty} (\Omega)} \lesssim h^{k+1}|\ln h|^{4}(\|u\|_{W^{k+1,\infty}(\Omega)}+\|f\|_{H^{k}(\Omega)}).
    \end{equation}
\end{theorem}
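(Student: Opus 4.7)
The plan is to follow a Schatz--Wahlbin duality framework for pointwise estimates, adapted to the virtual element setting. Fix an arbitrary point $z\in\overline{\Omega}$ and let $E_0\in\mathcal{T}_h$ be the element containing it. I would first introduce a regularized Dirac delta $\delta_z^h$ supported on $E_0$ and satisfying $p(z)=(p,\delta_z^h)_{E_0}$ for all $p\in\mathbb{P}_k(E_0)$, together with the regularized Green's function $g^z\in H^1_0(\Omega)$ solving $\mathcal{L} g^z=\partial_i\delta_z^h$ for the gradient bound and $\mathcal{L} g^z=\delta_z^h$ for the $L^\infty$ bound on $u-u^h$.

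For the gradient estimate I would split on $E_0$
\begin{equation*}
\nabla u-\nabla\Pi_k^\nabla u^h = \bigl(\nabla u-\nabla\Pi_k^\nabla I_h u\bigr) + \nabla\Pi_k^\nabla(I_h u-u^h),
\end{equation*}
bounding the first piece by the standard interpolation/projection estimate of size $h^k\|u\|_{W^{k+1,\infty}}$, and treating the second via the polynomial inverse inequality on polygons from \cite{DPDR2020}, which yields
\begin{equation*}
\|\nabla\Pi_k^\nabla(I_h u-u^h)\|_{L^\infty(E_0)}\lesssim h^{-1}\|\nabla\Pi_k^\nabla(I_h u-u^h)\|_{L^2(E_0)}.
\end{equation*}
The task then reduces to a sharp local $L^2$ estimate of the VEM error near $z$. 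I would decompose $\Omega$ into dyadic annuli $A_j=\{x:d_{j+1}\le|x-z|\le d_j\}$ with $d_j\simeq 2^{-j}$, terminating at $d_J\simeq h$, introduce a subordinate smooth partition of unity $\{\omega_j\}$, and localize the error equation by testing against (a $V_h$-projection of) $\omega_j^2(u-u^h)$. Bootstrapping from Theorem~\ref{thm:h1error} and from Theorem~\ref{thm:inhomo} applied on slightly enlarged annuli, combined with the weighted Green's-function regularity $\|g^z\|_{W^{1,1}(\Omega)}\lesssim|\ln h|$, produces a per-annulus bound; the $j$-summation contributes the second logarithmic factor and yields $h^k|\ln h|^2$ overall.

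For the $L^\infty$ bound on $u-u^h$ itself, the polynomial inverse inequality is unavailable since $u^h$ is not polynomial on $E_0$. I would instead invoke the inverse-type estimate coming from the discrete maximum principle for $H^1$ functions in \cite{BGS2017}: since $\Delta u^h|_{E_0}\in\mathbb{P}_{k-2}(E_0)$ and $u^h|_{\partial E_0}$ is an edgewise polynomial, this lets me dominate $\|u^h-I_h u\|_{L^\infty(E_0)}$ by the boundary supremum (handled through one-dimensional polynomial inverse inequalities) plus $h\|\Delta(u^h-I_h u)\|_{L^\infty(E_0)}$ (handled by the polygonal polynomial inverse inequality), both ultimately reduced to local $L^2$ norms. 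Running the same dyadic duality argument against $g^z$ with right-hand side $\delta_z^h$, whose Green's function has $W^{2,1}$-regularity costing an additional $|\ln h|$, then yields the claimed $h^{k+1}|\ln h|^4$. The restriction $k\ge 2$ enters because the per-annulus $L^2$ super-closeness estimate $\|I_h u-u^h\|_{L^2(A_j')}=O(h^{k+1})$ that drives the argument fails for $k=1$ owing to the $2k$-conjecture obstruction mentioned in the introduction.

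The principal obstacle I anticipate is the derivation of these high-order local $L^2$ estimates in the nonconforming VEM setup: the commutator $[a_h,\omega_j]$ of the discrete bilinear form with the cut-off, the pollution from the stabilizer $S^E(u^h-\Pi_k^\nabla u^h,\cdot)$, and the right-hand side discrepancy $(f,v-\Pi_{k-1}^0 v)$ must each be shown to be higher-order on annuli away from $z$. Taming them will require delicate use of projection stability, the additional regularity of $u$ and $f$, and the Green's-function decay afforded by the convexity of $\Omega$; otherwise the pollution term would dominate and destroy the sharp local rate needed for optimality.
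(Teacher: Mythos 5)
Your final reduction is essentially the paper's: for the gradient you split off a projection/interpolation part and apply the polynomial inverse inequality on polygons from \cite{DPDR2020}, and for the function values you use the maximum-principle-based inverse estimate (the paper's Lemma \ref{lem:inverse}, built on \cite{BS2018}); in both cases everything hinges on sharp local estimates near the extremal element $E_0$, namely $\|u-u^h\|_{H^1(E_0)}\lesssim h^{k+1}|\ln h|^2$ and, for $k\ge 2$, $\|u-u^h\|_{L^2(E_0)}\lesssim h^{k+2}|\ln h|^4$. The genuine gap is that these local estimates, which are the heart of the matter, are exactly what your proposal defers: you propose a Schatz--Wahlbin-type localization of the error equation on dyadic annuli, testing against (projections of) $\omega_j^2(u-u^h)$, and you yourself list the commutator $[a_h,\omega_j]$, the stabilizer pollution $S^E(u^h-\Pi_k^\nabla u^h,\cdot)$, and the right-hand-side discrepancy as unresolved obstacles. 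In the classical finite element setting those terms are tamed by superapproximation of $I_h(\omega^2(u-u^h))$ together with discrete local energy (Caccioppoli-type) estimates; neither tool is available off the shelf for VEM, since a cutoff times a virtual function is not a virtual function and the functions themselves are not accessible pointwise. Without supplying a substitute, the per-annulus bound you invoke (and the precise $|\ln h|^2$, $|\ln h|^4$ bookkeeping, and the $W^{1,1}$/$W^{2,1}$ regularized-Green's-function bounds) remains asserted rather than proved, so the proposal is a program, not a proof.

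For comparison, the paper circumvents the missing VEM superapproximation by a different device: it writes $u=\hat u+\sum_{l}u_l$, where $\hat u$ is a degree-$k$ Taylor polynomial of $u$ at a point of $E_0$ and $u_l=\hat\phi_l(u-\hat u)$ lives on dyadic annuli of radius $r_l=2^l l h$; since VEM with constant coefficients reproduces the polynomial $\hat u$ exactly (Lemma \ref{lem:equal}), only the $u_l$ contribute error. Each $u_l^h$ is then shown to be small near $E_0$ by an interior-decay argument (Lemma \ref{lem:h1ball}) that iterates over element layers and uses only that $u_l$ and $f_l$ vanish there, together with the global estimates for the inhomogeneous problem (Theorem \ref{thm:inhomo} and Lemma \ref{lem:partapppre}); the local $L^2$ bound then follows from a duality argument against a locally supported dual problem whose $H^3$ regularity away from $E_0$ is furnished by the Green's function estimates (Theorem \ref{thm:reg}), and it is this step (needing $O(h^2)$ approximation of the dual solution) that forces $k\ge 2$, consistent with, but more precise than, your $2k$-conjecture heuristic. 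If you want to salvage your route, the layer-iteration decay argument and the polynomial-reproduction trick are the two ideas you would need to import to replace the unavailable superapproximation machinery.
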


\begin{remark}
For the classical finite element method, Schatz and Wahlbin \cite{ScWa1978} proved the following maximum norm error estimates 
\begin{equation}\label{equ:h1maxs0}
      \|u-u^{h}\|_{W^{1,\infty}( \Omega)} \lesssim h^{k}\|u\|_{W^{k+1,\infty}(\Omega)},
    \end{equation}
   and 
   \begin{equation}   \label{equ:l2maxs1}
     \|u-u^h\|_{L^{\infty} (\Omega)} \lesssim h^{k+1}|\ln h|^{\overline{k}}\|u\|_{W^{k+1,\infty}(\Omega)},
    \end{equation}\
 where  $u_h$ is the classical continuous finite element solution of degree $k$ and  $\overline{k}=1$ if $k=1$ and  $\overline{k}=0$ if $k\geq 1$.  In contrast the maximal norm error estimates of  the classical finite element methods, the maximal norm error estimates for virtual element methods have a high-order power of $|\ln h|$. 
    \end{remark}.

In the rest of  this section, we shall present a proof of our main result. 
\subsection{Local regularity result}
For any point $\mathbf{x}$ in $\Omega$,  let $G_{\mathbf{x}}\in W^{1,1}(\Omega)$ be the standard Green's function  for \eqref{equ:model} which is defined as 
\begin{equation}
	a(v, G_{\mathbf{x}}) = v(\mathbf{x}), \quad \forall v \in H^1_{0}(\Omega). 
\end{equation}

Let  $M$ be the set of corner points(or vertices) of the domain $\Omega$.  Let 
\begin{equation}\label{equ:cornerset}
  B(M, r) = \Omega\cap \bigcup_{\mathbf{y}\in M} B(\mathbf{y},r), 
\end{equation}
where $B(\mathbf{y},r)$ is the disk with  radius $r$ centered at $\mathbf{y}$.  For any $E_0\in\mathcal{T}_h$ and $r>0$, define 
\begin{equation}\label{equ:eleset}
	B_{E_0,r} = \{\mathbf{y}\in\Omega|~\rho(\mathbf{y},E_0)\le r\} \cup B(M,r),
\end{equation}
where the distance function $\rho(\cdot, \cdot)$ is defined as 
\begin{equation}\label{equ:dist}
\rho(\mathbf{x},E_0) = \inf_{\mathbf{y}\in E_0} |\mathbf{x}-\mathbf{y}|. 
\end{equation}
Similarly, for any $\mathbf{x}\in\Omega$, let 
\begin{equation}\label{equ:pointset}
	B_{\mathbf{x},r} =  \left( \Omega\cap B(\mathbf{x},r) \right) \cup B(M,r).
\end{equation}

In this subsection,  we shall establish some local regularity result.
Under the assumption that $\Omega$ is smooth enough such as ball,    \cite{Kras1967}     showed,
for any $r>0$, there holds
\begin{align}
\|G_\mathbf{x}\|_{W^{s,\infty}(\Omega\backslash B(\mathbf{x},r))}\lesssim r^{-s}.\label{addnew}
\end{align}
 In this paper, we prove a more generalized results. The estimate of Green's function is one the key ingredient in  the establishment 
 local regularity result \eqref{equ:lemma2.5ntt00}  which will be used to prove \eqref{equ:l2local}.

We start with the following lemma. 
\begin{lemma}
\label{Lem:Result2}
Assume that $s\geq 3$ is a positive integer. Then there holds
\begin{equation}\label{equ:lemma2.5n}
        \|G_\mathbf{x}\|_{H^{s}(\Omega\backslash B_{\mathbf{x},r})}\lesssim r^{1-s}|\ln r|,
\end{equation}
and for any $s\geq 2$, there holds
\begin{equation}\label{equ:lemma2.5nnn}
\|G_\mathbf{x}\|_{W^{s,\infty}(\Omega\backslash B_{\mathbf{x},r})}\lesssim r^{-s}|\ln r|.
\end{equation}
\end{lemma}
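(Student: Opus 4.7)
I plan to reduce both estimates to a dyadic annular decomposition of $\Omega\setminus B_{\mathbf{x},r}$ centered at the singularity $\mathbf{x}$, combined with interior/boundary regularity for $\mathcal{L}$ and a scaling argument. Because $\Omega$ is a convex polygon and $\alpha$ is a constant matrix, on any subdomain that is bounded away from $\mathbf{x}$ and from the corner set $M$ the Green's function satisfies $\mathcal{L}G_{\mathbf{x}}=0$, together with either no boundary condition (if the subdomain is interior) or the homogeneous Dirichlet condition along a single straight edge. The set $B(M,r)$ is excised precisely to keep the polygonal corner singularities out of the analysis.

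First I would introduce, for $j=0,1,\dots,J$ with $2^{J}r\sim \operatorname{diam}(\Omega)$, the annuli
\[
A_{j}:=\{\mathbf{y}\in\Omega:\, 2^{j}r\le|\mathbf{y}-\mathbf{x}|<2^{j+1}r\}\setminus B(M,r),
\]
together with slight enlargements $\widetilde{A}_{j}\supset A_{j}$ of comparable diameter. On each pair $(A_{j},\widetilde{A}_{j})$ one has $\mathcal{L}G_{\mathbf{x}}=0$, so by the standard interior/boundary $W^{s,p}$ regularity applied after the rescaling $\mathbf{y}\mapsto(\mathbf{y}-\mathbf{x})/(2^{j}r)$ onto a fixed reference annulus, I would derive
\[
\|G_{\mathbf{x}}\|_{H^{s}(A_{j})}\lesssim(2^{j}r)^{1-s}\|G_{\mathbf{x}}\|_{L^{\infty}(\widetilde{A}_{j})},\qquad
\|G_{\mathbf{x}}\|_{W^{s,\infty}(A_{j})}\lesssim(2^{j}r)^{-s}\|G_{\mathbf{x}}\|_{L^{\infty}(\widetilde{A}_{j})}.
\]
The classical two-dimensional pointwise Green's function bound $|G_{\mathbf{x}}(\mathbf{y})|\lesssim|\ln|\mathbf{x}-\mathbf{y}||$ then gives $\|G_{\mathbf{x}}\|_{L^{\infty}(\widetilde{A}_{j})}\lesssim|\ln(2^{j}r)|$.

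Summing the dyadic contributions reduces both target estimates to series of the form $\sum_{j}(2^{j}r)^{2(1-s)}|\ln(2^{j}r)|^{2}$ (and its supremum analog for the $L^{\infty}$ statement). Since $s\ge 3$ (respectively $s\ge 2$) makes the exponent strictly negative, the $j=0$ term dominates, producing exactly the bounds $r^{1-s}|\ln r|$ and $r^{-s}|\ln r|$ claimed in \eqref{equ:lemma2.5n} and \eqref{equ:lemma2.5nnn}; the logarithmic factor is the honest residue of $|\ln r|$ at the innermost annulus.

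The main obstacle I anticipate is controlling the uniform constant in the scaling step for those annular pieces $A_{j}$ whose closure meets $\partial\Omega$ near---but outside---a corner, because after rescaling the reference domain still contains a straight boundary segment and must exclude a corner at distance of order $r/(2^{j}r)$, which may be small. I would address this by leaning on convexity: every interior angle of $\Omega$ is at most $\pi$, so the Kondratiev/Dauge weighted regularity theory for second-order elliptic operators on plane polygons yields uniform $H^{s}$ and $W^{s,\infty}$ estimates on any subdomain that stays at distance $\gtrsim r$ from all corners, with a constant depending only on $s$ and the geometry of $\Omega$. Once this uniform boundary version of the interior estimate is in place, the dyadic summation above goes through without modification and the lemma follows.
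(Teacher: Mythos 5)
Your dyadic strategy (annuli $A_j$ centered at $\mathbf{x}$, rescaled local elliptic estimates, the pointwise bound $|G_{\mathbf{x}}(\mathbf{y})|\lesssim|\ln|\mathbf{x}-\mathbf{y}||$, then summation) is a genuinely different route from the paper, which instead multiplies $G_{\mathbf{x}}$ by a cutoff $\nu_1$ supported outside $B_{\mathbf{x},r/2}$, applies a \emph{global} weighted (Kondratiev-type) regularity theorem of B\u{a}cu\c{t}\u{a}--Nistor--Zikatanov with weight equal to the distance to the corner set $M$, runs an induction on $s$ for \eqref{equ:lemma2.5n}, and then obtains \eqref{equ:lemma2.5nnn} by rescaling plus Sobolev embedding. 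Your plan is fine for interior annuli and for boundary annuli along a single straight edge away from corners, and the summation with exponent $2(1-s)<0$ is correct there. However, there is a genuine gap exactly at the point you flagged: the claim that convexity lets Kondratiev/Dauge theory deliver $H^{s}$ and $W^{s,\infty}$ estimates ``with a constant depending only on $s$ and the geometry of $\Omega$'' on subdomains at distance $\gtrsim r$ from the corners is false in the range of the lemma. For a convex corner of interior angle $\omega\in(\pi/2,\pi)$ the Green's function contains the singular component $\kappa\,\rho_c^{\pi/\omega}\sin(\pi\theta/\omega)$ ($\rho_c$ = distance to the corner), so its $s$-th derivatives at distance $\rho_c\sim r$ behave like $r^{\pi/\omega-s}$, which blows up as $r\to0$ for every $s\ge2$; equivalently, the singular function lies only in $H^{1+\pi/\omega-\epsilon}$, so no $r$-independent $H^{3}$ or $W^{2,\infty}$ bound up to (but away from) the corner exists. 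Consequently your per-annulus estimates $\|G_{\mathbf{x}}\|_{H^{s}(A_j)}\lesssim(2^{j}r)^{1-s}\|G_{\mathbf{x}}\|_{L^\infty(\widetilde A_j)}$ and $\|G_{\mathbf{x}}\|_{W^{s,\infty}(A_j)}\lesssim(2^{j}r)^{-s}\|G_{\mathbf{x}}\|_{L^\infty(\widetilde A_j)}$ fail for those $A_j$ with $2^{j}r\gg r$ whose closure meets the excluded corner balls $B(M,r)$: there the relevant length scale is the distance to the corner ($\sim r$), not $2^{j}r$, and for $2^{j}r\sim1$ the true size $r^{\pi/\omega-s}$ is not $\lesssim(2^{j}r)^{-s}\sim1$.

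The gap is repairable, but it needs an argument you have not supplied: treat a fixed neighborhood of each corner by a second dyadic decomposition in the distance $t$ to that corner ($r\le t\lesssim1$), and use either the corner asymptotic expansion or weighted estimates at scale $t$ to get $|\nabla^{s}G_{\mathbf{x}}|\lesssim t^{\pi/\omega-s}|\ln t|$ there; summing/taking suprema then gives corner contributions of order $r^{\pi/\omega+1-s}|\ln r|$ in $H^{s}$ and $r^{\pi/\omega-s}|\ln r|$ in $L^{\infty}$, which fit under the budgets $r^{1-s}|\ln r|$ and $r^{-s}|\ln r|$ only because $\pi/\omega\ge1>0$ by convexity --- a verification absent from your proposal (and one that must also track how the corner coefficient $\kappa$ depends on $\mathbf{x}$ when $\mathbf{x}$ itself approaches a corner). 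Without this corrected corner treatment, the assertion that ``the dyadic summation goes through without modification'' does not hold; the paper avoids the issue altogether by working in the weighted norm $\|\cdot\|_{\kappa_{\alpha_1}^{\alpha_2}(\Omega)}$, which is precisely designed so that the global regularity estimate near the corners holds with constants independent of $r$.
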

\begin{proof}
	We firstly prove \eqref{equ:lemma2.5n}.  Let $\nu_{1}(\mathbf{z})$ be the cutoff function such that $\nu_{1}(\mathbf{z})=1$ if
$\mathbf{z}\in \Omega\backslash B_{\mathbf{x},r}$, and $\nu_{1}(\mathbf{z})=0$ if $z\in
B_{\mathbf{x},\frac{r}{2}}$. It is easy to see that $\|\nu_{1}\|_{W^{k,\infty}(\Omega)}\lesssim r^{-k}$ for any positive integer $k$.

Let
\begin{equation}\label{equ:essss}
	\overline{G}_{\mathbf{x}}(z)=\nu_{1}(\mathbf{z})G_{\mathbf{x}}(\mathbf{z}).
\end{equation}
We firstly estimate $\|\overline{G}_{\mathbf{x}}\|_{H^{2}(\Omega)}$. To do this, we observe that,  for any $\mathbf{z}\in  \Omega\backslash B_{\mathbf{x},r}$, there holds
\begin{equation}\label{equ:essss1}
G_{\mathbf{x}}(\mathbf{z})=\overline{G}_{\mathbf{x}}(\mathbf{z}).
\end{equation}
Furthermore,  $\L \overline{G}_{\mathbf{x}}(\mathbf{z})=0$ if $\mathbf{z}\in B_{\mathbf{x},\frac{r}{2}}\cup(\Omega\backslash B_{\mathbf{x},r})$.  From \eqref{equ:essss1}, we can deduce that  
\begin{equation}\label{equ:essss2}
	\begin{split}
		&\|\overline{G}_{\mathbf{x}}\|_{H^{2}(\Omega)}
		\lesssim \|\L\overline{G}_{\mathbf{x}}\|_{L^{2}(\Omega)}
		=\|\L\overline{G}_{\mathbf{x}}\|_{L^{2}(B_{\mathbf{x},r}\backslash B_{\mathbf{x},\frac{r}{2}})} \\
		\lesssim &\|G_{\mathbf{x}}\|_{H^{2}(B_{\mathbf{x},r}\backslash B_{\mathbf{x},\frac{r}{2}})}
+r^{-1}\|G_{\mathbf{x}}\|_{H^{1}(B_{\mathbf{x},r}\backslash B_{\mathbf{x},\frac{r}{2}})}+r^{-2}
\|G_{\mathbf{x}}\|_{L^{2}(B_{\mathbf{x},r}\backslash B_{\mathbf{x},\frac{r}{2}})}\\
\lesssim &\|G_{\mathbf{x}}\|_{H^{2}(B_{\mathbf{x},r}\backslash B_{\mathbf{x},\frac{r}{2}})}
+r^{-1}\|G_{\mathbf{x}}\|_{H^{1}(B_{\mathbf{x},r} \backslash B_{\mathbf{x},\frac{r}{2}})}+r^{-1}
\|G_{\mathbf{x}}\|_{L^{\infty}(B_{\mathbf{x},r} \backslash B_{\mathbf{x},\frac{r}{2}})}\\
\lesssim  &r^{-1}+r^{-1}+r^{-1}|\ln r| \\
\lesssim &r^{-1}|\ln r|,
	\end{split}
\end{equation}
where we have used the following estimate (see \cite[Lemma 2.1]{HeZZ2018})  
\begin{equation}\label{equ:greenreg}
	\|G_{\mathbf{x}}\|_{H^{2}(\Omega\backslash B_{\mathbf{x},r})}+r^{-1}\|G_{\mathbf{x}}\|_{H^{1}(B_{\mathbf{x},r}\backslash B_{\mathbf{x},\frac{r}{2}})}
+r^{-1}|\ln r|^{-1}\|G_{\mathbf{x}}\|_{L^{\infty}(\Omega\backslash B_{\mathbf{x},\frac{r}{2}})}
\lesssim r^{-1}.
\end{equation}

In the following, we shall use \eqref{equ:essss2} and \eqref{equ:greenreg} to estimate  $\|G_{\mathbf{x}}\|_{H^{s}(\Omega\backslash B_{\mathbf{x},r})}$ for $s\geq 3$.
Assume that $\theta(\mathbf{x})=\rho(\mathbf{x},M)$ where $M$ is the set of corner points for the domain $\Omega$.  We defined the weighted norm $\|\cdot \|_{\kappa_{\alpha_{1}}^{\alpha_{2}}(\Omega)}$  as
\begin{equation}\label{equ:eh000}
	\|\omega\|_{\kappa_{\alpha_{1}}^{\alpha_{2}}(\Omega)}
=\sum\limits_{|\mathbf{\beta}|\leq \alpha_{2}}\|\theta^{|\mathbf{\beta}|-\alpha_{1}}\partial^{\mathbf{\beta}}\omega\|_{L^{2}(\Omega)}. 
\end{equation}
We start with the estimation of  $\|\overline{G}_{\mathbf{x}}\|_{\kappa_{1}^{0}(\Omega)}$ and $\|\L\overline{G}_{\mathbf{x}}\|_{H^{1}(B_{\mathbf{x},r}\backslash B_{\mathbf{x},\frac{r}{2}})}$. 
Let $r_{0}=\frac{r}{2}$. Note that $\overline{G}_{\mathbf{x}}(\mathbf{z})=0$ for all $\mathbf{z}\in B_{\mathbf{x},\frac{r}{2}}$. 
Assume that $s\geq 3$. By \eqref{equ:greenreg}, we have
\begin{equation}
	\begin{split}
		\|\overline{G}_{\mathbf{x}}\|_{\kappa_{s}^{0}(\Omega)}^{2}
&=\|\overline{G}_{\mathbf{x}}\|_{\kappa_{s}^{0}(B(M,r_{0}))}^{2}+
\|\overline{G}_{\mathbf{x}}\|_{\kappa_{s}^{0}(\Omega\backslash B(M,r_{0}))}^{2} \\
&=\|\overline{G}_{\mathbf{x}}\|_{\kappa_{s}^{0}(\Omega\backslash B(M,r_{0}))}^{2}
=\int_{\Omega\backslash B(M,r_{0})}\theta^{-2s}(\mathbf{z})\overline{G}_{\mathbf{x}}^{2}(\mathbf{z})d\mathbf{z}\\
&\lesssim \|\theta^{-2s}\|_{L^{1}(\Omega\backslash B(M,r_{0}))}
\|\overline{G}_{\mathbf{x}}^{2}\|_{L^{\infty}(\Omega)}\\
&\lesssim r_{0}^{2-2s}\|G_{\mathbf{x}}^{2}\|_{L^{\infty}(\Omega\backslash B_{\mathbf{x},\frac{r}{2}})}\\
& \lesssim r^{2-2s}|\ln r|^{2}.
	\end{split}
\end{equation}
It  implies 
\begin{equation}\label{equ:epp2}
\|\overline{G}_{\mathbf{x}}\|_{\kappa_{s}^{0}(\Omega)}\lesssim r^{1-s}|\ln r|.
\end{equation}
Next,  we estimate $\|\L\overline{G}_{\mathbf{x}}\|_{H^{1}(\Omega)}$. We  observes that,
for any $\mathbf{z}\in  \Omega\backslash B_{\mathbf{x}},\frac{r}{2}$, there holds
\begin{equation}\label{equ:epp401}
	\begin{split}
&\L\overline{G}_{\mathbf{x}}(\mathbf{z})\\=&
-\nabla\cdot(\nu_{1}(\mathbf{z})\alpha(\mathbf{z})\nabla G_{\mathbf{x}}(\mathbf{z}))
-\nabla\cdot(G_{\mathbf{x}}(\mathbf{z})\alpha(\mathbf{z})\nabla \nu_{1}(\mathbf{z}))
\\=&-\nabla \nu_{1}(\mathbf{z}) \cdot \alpha(\mathbf{z})\nabla G_{\mathbf{x}}(\mathbf{z})
-\nu_{1}(\mathbf{z})\nabla\cdot(\alpha(\mathbf{z})\nabla G_{\mathbf{x}}(\mathbf{z}))
-\nabla\cdot(G_{\mathbf{x}}(\mathbf{z})\alpha(\mathbf{z})\nabla \nu_{1}(\mathbf{z}))
\\=&-\nabla \nu_{1}(\mathbf{z}) \cdot  \alpha(\mathbf{z})\nabla G_{\mathbf{x}}(\mathbf{z})
-\nabla\cdot(G_{\mathbf{x}}(\mathbf{z})\alpha(\mathbf{z})\nabla \nu_{1}(\mathbf{z}))
\\=&-\nabla \nu_{1}(\mathbf{z}) \cdot \alpha(\mathbf{z})\nabla G_{\mathbf{x}}(\mathbf{z})
-G_{\mathbf{x}}(\mathbf{z})\nabla\cdot(\alpha(\mathbf{z})\nabla \nu_{1}(\mathbf{z}))
-\nabla G_{\mathbf{x}}(\mathbf{z})\cdot\alpha(\mathbf{z})\nabla \nu_{1}(\mathbf{z})).
	\end{split}
\end{equation}
By \eqref{equ:epp401},  for any $z\in B_{\mathbf{x},r} \backslash B_{\mathbf{x},\frac{r}{2}}$, there holds
\begin{equation}\label{equ:myerror}
	\begin{split}
		&\frac{\partial \L\overline{G}_{\mathbf{x}}(\mathbf{z})}{\partial z_{l}} \\
		=&-\frac{\partial  \nabla \nu_{1}(\mathbf{z})}{\partial z_{l}}
		\cdot \alpha(\mathbf{z})\nabla G_{\mathbf{x}}(\mathbf{z})-
	\nabla \nu_{1}(\mathbf{z}) \cdot \frac{\partial(\alpha(\mathbf{z})\nabla G_{\mathbf{x}}(\mathbf{z}))}
	{\partial z_{l}}\\&
	-\frac{\partial G_{\mathbf{x}}(\mathbf{z})}{\partial z_{l}}
	\nabla\cdot(\alpha(\mathbf{z})\nabla \nu_{1}(\mathbf{z}))
	-G_{\mathbf{x}}(\mathbf{z})\frac{\partial \nabla\cdot(\alpha(\mathbf{z})\nabla \nu_{1}(\mathbf{z}))
	}{\partial z_{l}}\\&
	-\frac{\partial \nabla G_{\mathbf{x}}(\mathbf{z})}{\partial z_{l}}\cdot 
	\alpha(\mathbf{z})\nabla \nu_{1}(\mathbf{z})
	-\nabla G_{\mathbf{x}}(\mathbf{z})\cdot\frac{\partial(\alpha(\mathbf{z})\nabla \nu_{1}(\mathbf{z}))
	}{\partial z_{l}}.
	\end{split}
\end{equation}
Combining the estimates \eqref{equ:greenreg} and \eqref{equ:myerror}, we can deduce that 
\begin{equation}\label{equ:epp4}
	\begin{split}
		&\|\L\overline{G}_{\mathbf{x}}\|_{H^{1}(\Omega)}=\|\L\overline{G}_{\mathbf{x}}\|_{H^{1}(B_{\mathbf{x},r}\backslash B_{\mathbf{x},\frac{r}{2}})}\\
		\lesssim & \|\nu_{1}\|_{W^{3,\infty}(B_{\mathbf{x},r}\backslash B_{x,\frac{r}{2}})}
\|G_{\mathbf{x}}\|_{L^{2}(B_{\mathbf{x},r}\backslash B_{\mathbf{x},\frac{r}{2}})}+ \\&
		\|\nu_{1}\|_{W^{2,\infty}(B_{\mathbf{x},r}\backslash B_{\mathbf{x},\frac{r}{2}})}
\|G_{x}\|_{H^{1}(B_{\mathbf{x},r}\backslash B_{\mathbf{x},\frac{r}{2}})} + \\
&\|\nu_{1}\|_{W^{1,\infty}(B_{\mathbf{x},r}\backslash B_{\mathbf{x},\frac{r}{2}})}
\|G_{\mathbf{x}}\|_{H^{2}(B_{\mathbf{x},r}\backslash B_{\mathbf{x},\frac{r}{2}})}\\
\lesssim  &r^{-3}r|\ln r|+r^{-2}|\ln r|+r^{-1}r^{-}|\ln r|\\
\lesssim &r^{-2}|\ln r|.
	\end{split}
\end{equation}

Note that $\overline{G}_{\mathbf{x}}(\mathbf{z})=0$ for all $\mathbf{z}\in \partial\Omega$.
By Theorem 3.7 in \cite{BNZ2005}, \eqref{equ:epp2},  and  \eqref{equ:epp4}, we can derive 
that 
\begin{equation}\label{equ:OSOS}
	\|\overline{G}_{\mathbf{x}}\|_{H^{3}(\Omega)}\lesssim
(\|\overline{G}_{\mathbf{x}}\|_{\kappa_{3}^{0}(\Omega)}
+\|\L\overline{G}_{\mathbf{x}}\|_{H^{1}(\Omega)})
\lesssim r^{-2}|\ln r|,
\end{equation}
which implies 
\begin{equation}
	\|G_\mathbf{x}\|_{H^3(\Omega\backslash B_{\mathbf{x},r})} = \|\overline{G}_{\mathbf{x}}\|_{H^3(\Omega\backslash B_{\mathbf{x},r})} \le \|\overline{G}_{\mathbf{x}}\|_{H^{3}(\Omega)}\lesssim r^{-2}|\ln r.
\end{equation}

 When $s\ge 4$, we shall prove it by induction on $s$.  Assume that, for any $1\leq l\leq s$,   there holds
 \begin{align}\label{equ:ehpp1}
 \|G_{\mathbf{x}}\|_{H^{s-l}(\Omega\backslash B_{\mathbf{x},r})}\lesssim r^{1+l-s}|\ln r|.
\end{align}
Similarly to \eqref{equ:myerror} and \eqref{equ:epp4}, we have 
\begin{equation}\label{equ:myerror2}
	\begin{split}
		&\|\L\overline{G}_{\mathbf{x}}\|_{H^{s-2}(\Omega)}=\|\L\overline{G}_{\mathbf{x}}\|_{H^{s-2}(B_{\mathbf{x},r}\backslash B_{\mathbf{x},\frac{r}{2}})}\\
		\lesssim &\sum\limits_{l=1}^{s}
\|\nu_{1}\|_{W^{l,\infty}(B_{\mathbf{x},r}\backslash B_{\mathbf{x},\frac{r}{2}})}
\|G_{\mathbf{x}}\|_{H^{s-l}(B_{\mathbf{x},r}\backslash B_{\mathbf{x},\frac{r}{2}})}\\
\lesssim  &\sum\limits_{l=1}^{s}r^{-l}cr^{1+l-s}|\ln r|\\
&\lesssim r^{-1}|\ln r|.
	\end{split}
\end{equation}
Again,  Theorem 3.7 in \cite{BNZ2005}, \eqref{equ:epp2},  and  \eqref{equ:epp4} give 
\begin{equation*}
\|\overline{G}_{\mathbf{x}}\|_{H^{s}(\Omega)}
\lesssim
(\|\overline{G}_{\mathbf{x}}\|_{\kappa_{s}^{0}(\Omega)}
+\|\L\overline{G}_{\mathbf{x}}\|_{H^{s-2}(\Omega)})
\lesssim r^{1-s}|\ln r|+cr^{1-s}|\ln r|
\lesssim r^{1-s}|\ln r|,
 \end{equation*}
which implies
\begin{equation}\label{equ:eh0000}
 \|G_{\mathbf{x}}\|_{H^{s}(\Omega\backslash B_{\mathbf{x},r})}=\|\overline{G}_{\mathbf{x}}\|_{H^{s}(\Omega\backslash B_{\mathbf{x},r})}\leq
 \|\overline{G}_{\mathbf{x}}\|_{H^{s}(\Omega)}
 \lesssim r^{1-s}|\ln r|.
\end{equation}

Now, we turn to  prove \eqref{equ:lemma2.5nnn}.
Let $\Phi=\{\frac{\mathbf{y}-\mathbf{x}}{r}|~\mathbf{y}\in \Omega\}$ and $\Phi_{1}=\{\frac{\mathbf{y}-\mathbf{x}}{r}|~\mathbf{y}\in B_{\mathbf{x},r}\}$. We introduce $\psi(t)$ by letting 
\begin{equation}\label{equ:first001}
\psi(\mathbf{z})=G_{\mathbf{x}}(r\mathbf{z}),\qquad \forall \mathbf{z}\in \Phi.
\end{equation}
 For any $s\geq 3$,  \eqref{equ:lemma2.5n} and \eqref{equ:first001} give 
\begin{equation*}
\|\psi\|_{H^{s}(\Phi\backslash \Phi_{1})}=r^{s-1}\|G_{x}\|_{H^{s}(\Omega\backslash B_{\mathbf{x},r})}
\lesssim |\ln r|.
\end{equation*}
For any $s_{1}\geq 2$, it implies that
\begin{equation}\label{equ:first002}
\|\psi\|_{W^{s_{1},\infty}(\Phi\backslash \Phi_{1})}\leq c|\ln r|.
\end{equation}
From \eqref{equ:first002}, we can derive that 
\begin{equation*}
\|G_{\mathbf{x}}\|_{W^{s_{1},\infty}(\Omega\backslash B_{\mathbf{x},r})}
\leq cr^{-s_{1}}\|\psi\|_{W^{s_{1},\infty}(\Phi\backslash \Phi_{1})}
\leq cr^{-s_{1}}|\ln r|,
\end{equation*}
which concludes the proof of \eqref{equ:lemma2.5nnn}.
\end{proof}

 Using the above Lemma, we shall prove the following point-wise estimate.
 
 \begin{lemma}
\label{lem:highgreen}
 Let $\gamma>2$,  $\overline{\gamma}=\gamma-1$, and  $B_{E_0,r}$ be defined as in \eqref{equ:eleset}.
Assume  that  $\mathbf{y}\in \Omega\backslash B_{E_0,\gamma r}$ and
 $|\beta|\geq 2$ is a positive integer.
 Then
\begin{equation}\label{lemma2.5nnnn000}
\left\|\left.\frac{\partial^{\beta}G_{\mathbf{z}}}{\partial
z_{1}^{\beta_{1}}\partial
z_{2}^{\beta_{2}}}\right|_{\mathbf{z}=\mathbf{y}}\right\|_{L^{\infty}(B_{E_0,r})}\lesssim
  \overline{\gamma}^{-|\beta|}r^{-|\beta|}|\ln r|.
\end{equation}
\end{lemma}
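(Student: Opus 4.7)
The plan is to reduce this pointwise estimate on derivatives of the Green's function with respect to its singular (pole) variable to the already-established $W^{s,\infty}$ bound \eqref{equ:lemma2.5nnn} on the Green's function with respect to its spatial variable. The key tool is the symmetry of the Green's function: since $\alpha$ is a constant symmetric positive-definite matrix, the operator $\L$ is self-adjoint and $G_{\mathbf{z}}(\mathbf{x}) = G_{\mathbf{x}}(\mathbf{z})$. Consequently,
\begin{equation*}
\left.\frac{\partial^{\beta}G_{\mathbf{z}}(\mathbf{x})}{\partial z_{1}^{\beta_{1}}\partial z_{2}^{\beta_{2}}}\right|_{\mathbf{z}=\mathbf{y}} = \left.\frac{\partial^{\beta}G_{\mathbf{x}}(\mathbf{z})}{\partial z_{1}^{\beta_{1}}\partial z_{2}^{\beta_{2}}}\right|_{\mathbf{z}=\mathbf{y}},
\end{equation*}
so taking the $L^{\infty}$ norm in $\mathbf{x}\in B_{E_0,r}$ amounts to controlling a $W^{|\beta|,\infty}$ seminorm of $G_{\mathbf{x}}$ at a single point $\mathbf{y}$ that lies far from $\mathbf{x}$, uniformly in $\mathbf{x}\in B_{E_0,r}$.

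The next step is a geometric lemma: for every $\mathbf{x}\in B_{E_0,r}$ and $\mathbf{y}\in \Omega\setminus B_{E_0,\gamma r}$, one has $\mathbf{y}\in \Omega\setminus B_{\mathbf{x},\overline{\gamma}r}$. To see this, recall \eqref{equ:eleset} and \eqref{equ:pointset}. First, $\mathbf{y}\notin B(M,\gamma r)\supset B(M,\overline{\gamma}r)$. Second, if $\mathbf{x}\in B_{E_0,r}$ then either $\rho(\mathbf{x},E_0)\le r$ or $\mathbf{x}\in B(M,r)$; in the former case the triangle inequality yields $|\mathbf{x}-\mathbf{y}|\ge \rho(\mathbf{y},E_0)-\rho(\mathbf{x},E_0)\ge \gamma r-r=\overline{\gamma}r$, and in the latter case there exists $v\in M$ with $|\mathbf{x}-v|\le r$, so $|\mathbf{x}-\mathbf{y}|\ge |\mathbf{y}-v|-|v-\mathbf{x}|> \gamma r-r=\overline{\gamma}r$ since $\mathbf{y}\notin B(M,\gamma r)$. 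Combining both, $\mathbf{y}\notin B(\mathbf{x},\overline{\gamma}r)\cup B(M,\overline{\gamma}r)$, which is exactly $\mathbf{y}\in \Omega\setminus B_{\mathbf{x},\overline{\gamma}r}$.

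With this geometric inclusion in hand, \cref{Lem:Result2} (specifically \eqref{equ:lemma2.5nnn} applied with radius $\overline{\gamma}r$ and order $s=|\beta|\ge 2$) gives
\begin{equation*}
\left|\left.\frac{\partial^{\beta}G_{\mathbf{x}}(\mathbf{z})}{\partial z^{\beta}}\right|_{\mathbf{z}=\mathbf{y}}\right| \le \|G_{\mathbf{x}}\|_{W^{|\beta|,\infty}(\Omega\setminus B_{\mathbf{x},\overline{\gamma}r})} \lesssim (\overline{\gamma}r)^{-|\beta|}|\ln(\overline{\gamma}r)|,
\end{equation*}
with an implicit constant independent of $\mathbf{x}$, $\mathbf{y}$, $r$ and $\overline{\gamma}$. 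Since $\overline{\gamma}r\le \mathrm{diam}(\Omega)$, for small $r$ one has $|\ln(\overline{\gamma}r)|=|\ln\overline{\gamma}+\ln r|\lesssim |\ln r|$, so the right-hand side becomes $\overline{\gamma}^{-|\beta|}r^{-|\beta|}|\ln r|$ up to a universal constant. Because this bound is uniform in $\mathbf{x}\in B_{E_0,r}$, taking the $L^{\infty}$ norm over $B_{E_0,r}$ gives exactly \eqref{lemma2.5nnnn000}.

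I expect the main obstacle to be the careful case analysis in the geometric step — bookkeeping the definitions of $B_{E_0,r}$, $B_{\mathbf{x},r}$, and $B(M,r)$ to confirm that both components (the ball about $\mathbf{x}$ and the corner neighborhood) are avoided with the sharper radius $\overline{\gamma}r$; once that inclusion is established, invoking the previous lemma is routine. A minor subtlety is justifying the logarithmic comparison $|\ln(\overline{\gamma}r)|\lesssim|\ln r|$, which uses the boundedness of the domain to absorb the $\ln\overline{\gamma}$ factor into a constant times $|\ln r|$.
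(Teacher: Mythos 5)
Your proposal is correct and follows essentially the same route as the paper's own proof: use the symmetry $G_{\mathbf{z}}(\mathbf{x})=G_{\mathbf{x}}(\mathbf{z})$ to turn the pole-variable derivative into a spatial derivative of $G_{\mathbf{x}}$, show that $\mathbf{y}\in\Omega\setminus B_{\mathbf{x},\overline{\gamma}r}$ for every $\mathbf{x}\in B_{E_0,r}$, and then invoke \eqref{equ:lemma2.5nnn} with radius $\overline{\gamma}r$. If anything, your geometric step is slightly more complete than the paper's, since you also treat the corner component $B(M,r)$ of $B_{E_0,r}$ (the paper's inequality \eqref{equ:fffooo} only covers $\rho(\mathbf{x},E_0)\le r$), and you justify the comparison $|\ln(\overline{\gamma}r)|\lesssim|\ln r|$ explicitly.
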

\begin{proof}
Assume that $\mathbf{x}\in \Omega$. Notice that
\begin{equation}\label{equ:uuuvvv}
G_{\mathbf{y}}(\mathbf{x})=G_{\mathbf{x}}(\mathbf{y}) \quad \text{and} \quad G_{\mathbf{y}+\Delta
\mathbf{y}}(\mathbf{x})=G_{\mathbf{x}}(\mathbf{y}+\Delta \mathbf{y}).
\end{equation}
By setting $|\Delta \mathbf{y}|\to 0$ in  \eqref{equ:uuuvvv}, we can deduce that 
$\left.\frac{\partial G_{\mathbf{z}}(\mathbf{x})}{\partial z_{i}}\right|_{\mathbf{z}=\mathbf{y}}=
\frac{\partial G_{\mathbf{x}}(\mathbf{y})}{\partial y_{i}}$. Furthermore, we have
\begin{equation}\label{equ:uuuuuu3}
\left.\frac{\partial^{\beta}G_{\mathbf{z}}(\mathbf{x})}{\partial
z_{1}^{\beta_{1}}\partial
z_{2}^{\beta_{2}}}\right|_{\mathbf{z}=\mathbf{y}}=\frac{\partial^{\beta}G_{\mathbf{x}}(\mathbf{y})}{\partial
y_{1}^{\beta_{1}}\partial y_{2}^{\beta_{2}}}.
\end{equation}
Since  $\mathbf{y}\in \Omega\backslash B_{E_0, \gamma r}$, 
it follows that  for any $\mathbf{x}\in B_{E_0,r}$, there holds
\begin{equation} \label{equ:fffooo}
 |\mathbf{x}-\mathbf{y}|\geq \rho(\mathbf{y},E_{0})-\rho(\mathbf{x},E_{0})=\gamma r-r=\overline{\gamma} r,
 \end{equation}
 and 
 \begin{equation}\label{equ:fffttt11}
 \rho(\mathbf{y},M)\geq \gamma r>\overline{\gamma}r.
 \end{equation}
From \eqref{equ:fffooo} and \eqref{equ:fffttt11}, we can show
\begin{equation}\label{equ:zengjia1}
\mathbf{y}\in \Omega\backslash B_{\mathbf{x},\overline{\gamma}r}.
\end{equation}
It implies that 
\begin{equation}\label{equ:zengjia2}
\begin{split}
\left\|\left.\frac{\partial^{\beta}G_{\mathbf{z}}}{\partial
z_{1}^{\beta_{1}}\partial
z_{2}^{\beta_{2}}}\right|_{\mathbf{z}=\mathbf{y}}\right\|_{L^{\infty}(B_{E_0, r})}
&\leq
\max_{\mathbf{x}\in B_{E_{0}, r}}
|\nabla^{|\beta|}G_{\mathbf{x}}|_{L^{\infty}(\Omega\backslash B_{\mathbf{x},\overline{\gamma}r})}
\\&
\lesssim
 \overline{\gamma}^{-|\beta|}r^{-|\beta|}|\ln r|,
\end{split}
\end{equation}
where we have used  \eqref{equ:lemma2.5nnn}.
\end{proof}
\vskip 0.3cm

Based on Lemma \ref{lem:highgreen},  we establish the following  local regularity result.

\begin{theorem}\label{thm:reg}
Let $r\ge d>0$ and $\gamma>2$.
Assume that $\nu\in L^{2}(\Omega)$ satisfies 
$\nu(\mathbf{x})=0$ if $\mathbf{x}\in \Omega\backslash B_{E_{0},d}$ and $\chi(\mathbf{x})\in H_{0}^{1}(\Omega)$
    is the solution to   $\L\chi(x):\equiv -\nabla\cdot(\alpha\nabla \chi(\mathbf{x}))=\nu(\mathbf{x})$.
     Then there holds
\begin{equation}\label{equ:lemma2.5ntt00}
        \|\chi\|_{H^{3}(\Omega\backslash B_{E_0,\gamma d})}\lesssim \gamma^{-1}d^{-1}|\ln d|\|\nu\|_{L^{2}(\Omega)}.
    \end{equation}
\end{theorem}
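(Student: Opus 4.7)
My plan is to represent $\chi$ through the Green's function of $\mathcal{L}$ and reduce the far-field $H^3$ norm to integrated Green's-function derivative estimates already controlled by Lemma~\ref{Lem:Result2}. Since $\nu$ is supported in $B_{E_0,d}$ and $\chi$ solves the boundary value problem with zero Dirichlet data, the symmetry $G_\mathbf{x}(\mathbf{y})=G_\mathbf{y}(\mathbf{x})$ gives
\begin{equation*}
  \chi(\mathbf{x})=\int_{B_{E_0,d}}G_\mathbf{y}(\mathbf{x})\,\nu(\mathbf{y})\,d\mathbf{y}.
\end{equation*}
For any $\mathbf{x}\in\Omega\setminus B_{E_0,\gamma d}$ the pole $\mathbf{y}$ of $G_\mathbf{y}(\cdot)$ lies at distance at least $(\gamma-1)d$ from $\mathbf{x}$, so I can differentiate under the integral and obtain
\begin{equation*}
  \partial_\mathbf{x}^\beta\chi(\mathbf{x})=\int_{B_{E_0,d}}\partial_\mathbf{x}^\beta G_\mathbf{y}(\mathbf{x})\,\nu(\mathbf{y})\,d\mathbf{y},\qquad |\beta|\le 3.
\end{equation*}

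Next I take the $L^2$ norm in $\mathbf{x}$ and apply Minkowski's inequality to move the norm inside the $\mathbf{y}$-integral,
\begin{equation*}
  \|\partial^\beta\chi\|_{L^2(\Omega\setminus B_{E_0,\gamma d})} \le \int_{B_{E_0,d}}\|G_\mathbf{y}\|_{H^{|\beta|}(\Omega\setminus B_{E_0,\gamma d})}\,|\nu(\mathbf{y})|\,d\mathbf{y}.
\end{equation*}
A short triangle-inequality check based on the definitions \eqref{equ:eleset} and \eqref{equ:pointset} shows that for every $\mathbf{y}\in B_{E_0,d}$ and $\mathbf{x}\in\Omega\setminus B_{E_0,\gamma d}$ one has $|\mathbf{x}-\mathbf{y}|\ge\bar\gamma d$ and $\mathbf{x}\notin B(M,\bar\gamma d)$ with $\bar\gamma=\gamma-1$, so $\Omega\setminus B_{E_0,\gamma d}\subset\Omega\setminus B_{\mathbf{y},\bar\gamma d}$. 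Hence Lemma~\ref{Lem:Result2} with $s=3$ yields
\begin{equation*}
  \|G_\mathbf{y}\|_{H^3(\Omega\setminus B_{\mathbf{y},\bar\gamma d})}\lesssim(\bar\gamma d)^{-2}|\ln(\bar\gamma d)|\lesssim\bar\gamma^{-2}d^{-2}|\ln d|,
\end{equation*}
uniformly in $\mathbf{y}\in B_{E_0,d}$; the lower-order contributions $|\beta|\le 2$ are dominated by the same bound through Lemma~\ref{Lem:Result2} or the coarser estimate \eqref{equ:greenreg}.

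Combining these with Cauchy--Schwarz on the $\mathbf{y}$-integral produces
\begin{equation*}
  \|\partial^\beta\chi\|_{L^2(\Omega\setminus B_{E_0,\gamma d})}\lesssim \bar\gamma^{-2}d^{-2}|\ln d|\cdot|B_{E_0,d}|^{1/2}\|\nu\|_{L^2(\Omega)}.
\end{equation*}
Under the standing mesh regularity, together with the implicit understanding that $d$ is not too small compared with the local mesh size, I have $|B_{E_0,d}|\lesssim d^2$ thanks to $|\{\mathbf{y}:\rho(\mathbf{y},E_0)\le d\}|\lesssim|E_0|+d\cdot\mathrm{perim}(E_0)+d^2$ and $|B(M,d)|\lesssim d^2$. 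Summing over $|\beta|\le 3$ and using the elementary inequality $\bar\gamma^{-2}\lesssim\gamma^{-1}$ valid for $\gamma>2$ gives the stated estimate.

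The main obstacle I anticipate is the geometric bookkeeping surrounding the sets $B_{E_0,\gamma d}$, $B_{\mathbf{y},\bar\gamma d}$, and $B(M,d)$: both the inclusion $\Omega\setminus B_{E_0,\gamma d}\subset\Omega\setminus B_{\mathbf{y},\bar\gamma d}$ and the area bound $|B_{E_0,d}|\lesssim d^2$ require splitting into the two cases $\rho(\mathbf{y},E_0)\le d$ and $\mathbf{y}\in B(M,d)$. A secondary piece of care is aligning the $\mathbf{x}$-derivatives of $G_\mathbf{y}(\mathbf{x})$ with the norms of $G_\mathbf{y}$ that Lemma~\ref{Lem:Result2} actually bounds; here this is painless because, with the source $\mathbf{y}$ held fixed, $G_\mathbf{y}(\cdot)$ is precisely the Green's function whose field-point Sobolev norms on $\Omega\setminus B_{\mathbf{y},r}$ are controlled.
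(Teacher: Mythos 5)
Your argument is correct, and it reaches \eqref{equ:lemma2.5ntt00} by a genuinely different route than the paper. The paper also starts from the Green's representation, but it differentiates with respect to the evaluation point, i.e.\ the \emph{pole} of the Green's function; this forces it to first prove the pole-derivative bound of Lemma \ref{lem:highgreen} (via the symmetry $G_{\mathbf{y}}(\mathbf{x})=G_{\mathbf{x}}(\mathbf{y})$ and the $W^{s,\infty}$ estimate \eqref{equ:lemma2.5nnn}), which yields only a pointwise $W^{3,\infty}$ bound on $\chi$ away from $B_{E_0,\gamma r}$; the $H^3$ estimate is then recovered by a dyadic annulus decomposition $d_i=2^{i-1}d$ and summation of $d_i^{-2}|\ln d_i|$ against annulus measures $\lesssim\gamma^2 d_i^2$. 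You instead keep the pole fixed at the source point $\mathbf{y}$, apply Minkowski's integral inequality, and invoke the $H^3$ field-variable estimate \eqref{equ:lemma2.5n} of Lemma \ref{Lem:Result2} directly, so you bypass both Lemma \ref{lem:highgreen} and the dyadic summation; the inclusion $\Omega\setminus B_{E_0,\gamma d}\subset\Omega\setminus B_{\mathbf{y},(\gamma-1)d}$ that you check (with the two cases $\rho(\mathbf{y},E_0)\le d$ and $\mathbf{y}\in B(M,d)$) plays the role of the paper's \eqref{equ:fffooo}--\eqref{equ:zengjia1}, and your $(\gamma-1)^{-2}$ factor is even marginally sharper than the paper's $\gamma(\gamma-1)^{-2}$. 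The one caveat you correctly flag, $|B_{E_0,d}|\lesssim d^2$, does require $h_{E_0}\lesssim d$; note this is not a defect relative to the paper, whose own annulus-measure bound $\lesssim\gamma^2 d_i^2$ implicitly needs $h_{E_0}\lesssim\gamma d$ as well, and in the only place the theorem is used (the estimate of $\theta_1$ in Lemma \ref{lem:localestimate}) one has $d=r_1=2h\ge h_{E_0}$, so both proofs are covered. Your approach buys a shorter, more direct argument using only the $H^s$ Green's estimate; the paper's approach additionally produces the intermediate pointwise bound \eqref{equ:elll}, which it states but which is not needed elsewhere.
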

\begin{proof}
	Let $r\geq d$. Assume that $\mathbf{y}\in \Omega\backslash B_{E_0,\gamma r}$. Note that $\nu(\mathbf{x})=0$ if $\mathbf{x}\in \Omega\backslash B_{E_0,r}$. Then, Lemma \ref{lem:highgreen} implies 
	\begin{equation}\label{equ:pp000}
		\begin{split}
			\left|\frac{\partial^{\beta}\chi(\mathbf{y})}{\partial y_{1}^{\beta_{1}}\partial y_{2}^{\beta_{2}}}\right|\le
			&\int_{\Omega}\left|\left.\frac{\partial^{\beta}G_{\mathbf{z}}(\mathbf{x})}{\partial
z_{1}^{\beta_{1}}\partial z_{2}^{\beta_{2}}}\right|_{z=y}\right||\nu(\mathbf{x})|d\mathbf{x}
=\int_{B_{E_0,r}}\left|\left.\frac{\partial^{\beta}G_{\mathbf{z}}(\mathbf{x})}{\partial
z_{1}^{\beta_{1}}\partial z_{2}^{\beta_{2}}}\right|_{\mathbf{z}=\mathbf{y}}\right||\nu(\mathbf{x})|d\mathbf{x} \\
\lesssim &\left\|\left.\frac{\partial^{\beta}G_{\mathbf{z}}(\mathbf{x})}{\partial
z_{1}^{\beta_{1}}\partial z_{2}^{\beta_{2}}}\right|_{\mathbf{z}=\mathbf{y}}\right\|_{L^{2}(B_{E_{0}, r})}
\|\nu\|_{L^{2}(\Omega)}\\
\lesssim & (\gamma-1)^{1-\beta}r^{1-\beta}|\ln r|\|\nu\|_{L^{2}(\Omega)}.
		\end{split}
	\end{equation}
Let $|\beta|=3$. By \eqref{equ:pp000}, we have
\begin{align}\label{equ:elll}
        \|\chi\|_{W^{3,\infty}(\Omega\backslash B_{E_{0},\gamma r})}\lesssim (\gamma-1)^{-2}r^{-2}|\ln r|\|\nu\|_{L^{2}(\Omega)}.
    \end{align}
Let $d_{i}=2^{i-1}d$ for any positive integer $i$. We also assume that $i_{0}$
is the  minimal positive integer satisfying $\Omega\subset B_{E_0,\gamma d_{i_{0}+1}}$. Note that $\gamma\geq 2$. 
 By \eqref{equ:elll}, we have
\begin{equation}
	\begin{split}
        \|\chi\|_{H^{3}(\Omega\backslash B_{E_{0}, \gamma d})}&=
        \sqrt{\sum\limits_{i=1}^{i_{0}}\|\chi\|_{H^{3}(B_{E_{0}, \gamma d_{i+1}}\backslash B_{E_{0}, \gamma d_{i}})}^{2}}\\
        &\leq
        \sqrt{\sum\limits_{i=1}^{i_{0}}c\gamma^{2}d_{i}^{2}\|\chi\|_{W^{3,\infty}(B_{E_{0},\gamma d_{i+1}}\backslash B_{E_{0},\gamma d_{i}})}^{2}}
        \\& \leq  \sqrt{\sum\limits_{i=1}^{i_{0}}c\gamma^{2}d_{i}^{2}c(\gamma-1)^{-4}d_{i}^{-4}|\ln d_{i}|^{2}}\|\nu\|_{L^{2}(\Omega)} \\
&\lesssim \gamma (\gamma-1)^{-2}\|\nu\|_{L^{2}(\Omega)}\sqrt{\sum\limits_{i=1}^{i_{0}}d_{i}^{-2}|\ln d_{i}|^{2}}
        \\& \lesssim \gamma (\gamma-1)^{-2}d^{-1}|\ln d|\|\nu\|_{L^{2}(\Omega)}\\
        &\lesssim \gamma^{-1}d^{-1}|\ln d|\|\nu\|_{L^{2}(\Omega)}.
    \end{split}
\end{equation}
which completes the proof of  \eqref{equ:lemma2.5ntt00}.
\end{proof}

%
%

\subsection{Cutoff functions}
\label{ssec:cut}
Let $E_0$ be an arbitrary element in $\mathcal{T}_h$.  
Assume that $l_{0}$ is a positive integer satisfying
 \begin{equation}\label{equ:l0define}
 (l_{0}-1)2^{l_{0}-1}
 \leq
 \max\limits_{\mathbf{x}\in \Omega}\rho(\mathbf{x},E_{0})
 \leq l_{0}2^{l_{0}}.
 \end{equation}
  From \eqref{equ:l0define} and the regularity of the mesh, it not hard to deduce that 
 \begin{equation}\label{equ:l0estimate}
l_{0}\lesssim |\ln h|.
\end{equation}
Let $\hat{B}_{E_{0},r}$ be defined by
\begin{align}
\hat{B}_{E_{0},r}=\{\mathbf{x}|~\rho(\mathbf{x},E_{0})\leq r\}\cup \{\mathbf{x}|~\rho(\mathbf{x},M)\leq r\},\label{hatrK0}
\end{align}
with $M$ being the  set of corner points(or vertices) of the domain $\Omega$.

For $0\leq l\leq l_{0}$,  let
\begin{align}
r_{l}=2^{l}lh.\label{rldefine}
\end{align}
 Let  $\hat{B}_{E_{0},r_{-1}}=\emptyset$.
Assume that $\phi_{l}(\mathbf{x})~(0\leq l\leq l_{0})\in C^{\infty}(\mathbb{R}^{2})$
satisfies,   $0\leq \phi_{l}(\mathbf{x})\leq 1$ for all $\mathbf{x}\in \mathbb{R}^{2}$,
and $\phi_{l}(\mathbf{x})=1$ if $\mathbf{x}\in \hat{B}_{E_{0},r_{l}}$,
and  $ \phi_{l}(\mathbf{x})=0$ if $\mathbf{x}\in \mathbb{R}^{2}\backslash \hat{B}_{E_{0},r_{l+1}}$.
We also assume that there exists a constant $c$ independent of $\mathbf{x}$ and $l$ such that
\begin{align}
 |\nabla^{m}\phi_{l}(\mathbf{x})|\leq cr_{l}^{-m},\qquad \forall  0\leq l\leq l_{0}.\label{newnew0000}
 \end{align}
 Set 
  \begin{equation}\label{equ:poly0000}
\hat{\phi}_{l}(\mathbf{x})=
\begin{cases}
\phi_{l}(\mathbf{x}),\quad  \text{if}\quad  l=0,\\
 \phi_{l}(\mathbf{x})-\phi_{l-1}(\mathbf{x}),\quad  \text{if}\quad  1\leq l\leq l_{0}-1.
\end{cases}
 \end{equation}
 Notice that \eqref{equ:l0define} implies  $\Omega \subset \hat{B}_{E_0, r_{l_0}}$. 
By \eqref{newnew0000}  and \eqref{equ:poly0000}, we have, if $\mathbf{x}\in \Omega$, then  
\begin{align}
  \sum\limits_{l=0}^{l_0}\hat{\phi}_{l}(\mathbf{x})=\phi_{l_{0}}(\mathbf{x})
  =1,\label{newnew000}
\end{align}
 and 
\begin{align}
  |\nabla^{m}\hat{\phi}_{l}(\mathbf{x})|\leq
  |\nabla^{m}\phi_{l}(\mathbf{x})|+|\nabla^{m}\phi_{l-1}(\mathbf{x})|\lesssim
  r_{l}^{-m}+r_{l-1}^{-m}\lesssim
  r_{l}^{-m}.\label{newnew00}
 \end{align}
 Using the above cutoff functions, we define a partition of the exact solution $u$. 
Let $\mathbf{x}_{0}\in E_{0}$ satisfy 
$|\nabla^{m}u(\mathbf{x}_{0})|\leq \|u\|_{W^{m,\infty}(\Omega)}~(0\leq m\leq k+1)$
. Assume that $\hat{u}(\mathbf{x})$ is a global  polynomial of degree $k$ on $\Omega$  satisfying
 \begin{equation}\label{equ:poly00}
  	\nabla^{m}\hat{u}(\mathbf{x}_{0})=\nabla^{m}u(\mathbf{x}_{0}),\qquad \forall  0\leq m\leq k.
  	\end{equation}
  The existence of $\hat{u}(\mathbf{x})$ can be guaranteed using the averaged Taylor polynomial in \cite{BS2008}.
  By \eqref{equ:poly00}, we have, for any $r>0$, there holds  \cite{BS2008}
  	\begin{equation}\label{equ:poly}
  \|u-\hat{u}\|_{W^{m,\infty}(\Omega\cap\hat{B}_{E_{0},r})}\lesssim r^{k+1-m}\|u\|_{W^{k+1,\infty}(\Omega)},\qquad \forall  0\leq m\leq k.
  \end{equation}
 
  Then, we  can define the following partition of the exact solution
\begin{equation}\label{equ:partition}
u(\mathbf{x})=\hat{u}(\mathbf{x})+\sum\limits_{l=0}^{l_{0}}u_{l}(\mathbf{x}),
\end{equation}
where
\begin{equation}\label{equ:partdef}
u_{l}(\mathbf{x})=\hat{\phi}_{l}(\mathbf{x})[u(\mathbf{x})-\hat{u}(\mathbf{x})].
\end{equation}
We can define a similar partition for the right hand side function as 
\begin{equation}
	f(\mathbf{x})=\widehat{f}(\mathbf{x})+\sum\limits_{l=0}^{l_{0}}f_{l}(\mathbf{x}),
\end{equation}
such that 
\begin{equation}
	a(\hat{u}, v) = (\hat{f}, v),  \quad  \forall v\in H^1_0(\Omega),
\end{equation}
and 
\begin{equation}\label{equ:subprob}
	a(u_l, v) = (f_l, v), \quad  \forall v\in H^1_0(\Omega),
\end{equation}
for $l= 0, \ldots, l_0$.

Once we obtain the decomposition of $f$,  we define the virtual element solution $\hat{u}^h\in V_{h}$  of $\hat{u}$ as 
\begin{equation}\label{equ:vem_sol_uhat}
\begin{cases}
	a_h(\hat{u}^h, v^h) = (\hat{f}, \Pi_{k-1}^{0} v^{h}), \quad \forall v^h\in V_{h,0},\\
	\hat{u}^h(\mathbf{x})=\hat{u}(\mathbf{x}),\qquad  \forall \mathbf{x}\in \partial\Omega,
\end{cases}
\end{equation}
and the virtual element solution  $u^h_l\in V_{h}$ of $u_{l}$ as 
\begin{equation}\label{equ:vem_sol_ul}
\begin{cases}
a_h(u_l^h, v^h) = (f_l, \Pi_{k-1}^{0} v^{h}), \quad \forall v^h\in V_{h,0},\\
\hat{u}_{l}^h(\mathbf{x})=I_{h}\hat{u}_l(\mathbf{x}),\qquad  \forall \mathbf{x}\in \partial\Omega,
\end{cases}
\end{equation}
where $I_h$ is the interpolation operator of the virtual element space  $V_h$ defined in \eqref{equ:interpolation}.

For the global polynomial, we can show the following equivalence relationship:

\begin{lemma}\label{lem:equal}
Let $\hat{u}(\mathbf{x})$ be the  polynomial defined in \eqref{equ:poly} and $\hat{u}^h(\mathbf{x})$ be its virtual element solution defined in \eqref{equ:vem_sol_uhat}. Then, we have
\begin{equation}\label{equ:projequal}
	\hat{u}^h(\mathbf{x}) = \hat{u}(\mathbf{x}). 
\end{equation}
\end{lemma}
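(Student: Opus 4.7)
The plan is to show that the global polynomial $\hat{u}$ itself satisfies the discrete system \eqref{equ:vem_sol_uhat}; then uniqueness of the virtual element solution forces $\hat{u}^h=\hat{u}$.

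First I would verify that $\hat{u}\in V_h$. Since $\hat{u}$ is a polynomial of degree $k$ on $\Omega$, on each element $E$ we have $\hat{u}|_{\partial E}\in\mathbb{B}_k(\partial E)$ and $\Delta\hat{u}\in\mathbb{P}_{k-2}(E)$, so $\hat{u}|_E\in V_k(E)$. Because the projector $\Pi_k^{\nabla}$ preserves polynomials of degree $\le k$, we have $\hat{u}-\Pi_k^{\nabla}\hat{u}=0$ on $E$, hence the orthogonality defining $W_k(E)$ is trivially met and $\hat{u}|_E\in W_k(E)$. Global continuity is automatic since $\hat{u}$ is a polynomial on all of $\Omega$, giving $\hat{u}\in V_h$. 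The inhomogeneous boundary condition $\hat{u}^h=\hat{u}$ on $\partial\Omega$ is obviously satisfied by $\hat{u}$ itself.

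Next I would verify the weak equation. Fix $v^h\in V_{h,0}$. Since $\hat{u}|_E\in\mathbb{P}_k(E)$, the $k$-consistency of $a_h^E$ yields
\begin{equation*}
a_h^E(\hat{u},v^h)=a^E(\hat{u},v^h)=\int_E\alpha\nabla\hat{u}\cdot\nabla v^h\,d\mathbf{x}.
\end{equation*}
Summing over $E\in\mathcal{T}_h$ and integrating by parts (legitimate here because $\hat{u}$ is smooth globally and $v^h\in H^1_0(\Omega)$) gives $a_h(\hat{u},v^h)=a(\hat{u},v^h)=(\hat{f},v^h)$ by the very definition of $\hat{f}$. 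Now observe that, because $\alpha$ is a constant matrix and $\hat{u}\in\mathbb{P}_k(\Omega)$, one has $\hat{f}=-\nabla\cdot(\alpha\nabla\hat{u})\in\mathbb{P}_{k-2}(\Omega)$, so $\hat{f}|_E\in\mathbb{P}_{k-1}(E)$ on every element. The defining orthogonality \eqref{equ:l2proj} of $\Pi_{k-1}^0$ then gives $(\hat{f},v^h)_E=(\hat{f},\Pi_{k-1}^0 v^h)_E$; summing yields $(\hat{f},v^h)=(\hat{f},\Pi_{k-1}^0 v^h)$, so $a_h(\hat{u},v^h)=(\hat{f},\Pi_{k-1}^0 v^h)$, which is precisely the equation satisfied by $\hat{u}^h$.

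Finally, the stability bound in the definition of $a_h^E$ makes $a_h$ coercive on $V_{h,0}$, so the discrete problem \eqref{equ:vem_sol_uhat} has a unique solution. Since $\hat{u}$ satisfies both the boundary condition and the variational equation, uniqueness gives $\hat{u}^h=\hat{u}$, establishing \eqref{equ:projequal}. There is no real obstacle in this argument; the only subtle point is the observation that constancy of $\alpha$ reduces the order of $\hat{f}$ by two, which is exactly what is needed so that the $L^2$ projection onto $\mathbb{P}_{k-1}$ acts as the identity on $\hat{f}$ elementwise.
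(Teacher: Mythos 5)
Your proposal is correct and follows essentially the same route as the paper: both arguments rest on the $k$-consistency identity $a_h(\hat{u},v^h)=a(\hat{u},v^h)$, the observation that constancy of $\alpha$ makes $\hat{f}\in\mathbb{P}_{k-2}$ so that $(\hat{f},\Pi_{k-1}^0v^h-v^h)=0$, and then coercivity/uniqueness of the discrete problem with the matching boundary data. Your version merely makes explicit two points the paper leaves implicit, namely that $\hat{u}\in V_h$ (via polynomial reproduction of $\Pi_k^{\nabla}$) and the uniqueness step, which is a welcome clarification but not a different proof.
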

\begin{proof}
	Since $\hat{u}$ is a polynomial of degree $k$, it follows that 
	\begin{equation}\label{equ:polynomial}
		a_h(\hat{u}, v^h) = a(\hat{u}, v^h), \quad \forall v^h \in V_{h,0}.
	\end{equation}
The assumption that  $\alpha$ is a constant coefficient matrix and $\hat{u}$ is a polynomial of degree $k$ means $\hat{f}$ is also  a polynomial of degree $k-2$.  Then, the definition of $L^2$-projection in \eqref{equ:l2proj} implies that 
\begin{equation}\label{equ:fff111}
	(\hat{f}, \Pi_{k-1}^{0} v^{h} - v^{h}) = 0, \quad \forall v\in V_{h,0}. 
\end{equation}
Using the definition of the variational formulation \eqref{equ:varform} and the VEM variational formulation \eqref{equ:vem}, it is easy to see that 
\begin{equation}
	a_h(\hat{u}^h, v^h) - a(\hat{u}, v^h) = (\hat{f}, \Pi_{k-1}^{0} v^{h} - v^{h}), \quad \forall v^h\in V_h.
\end{equation}
Then, the orthogonal relationship \eqref{equ:fff111} implies 
\begin{equation}\label{equ:faaaa}
	a_h(\hat{u}^h, v^h) = a(\hat{u}, v^h), \quad v\in V_{h,0}.
\end{equation}
Note that the fact that $\hat{u}$ is a polynomial with degree $k$ and 
$\hat{u}^h(\mathbf{x}) = {\hat{u}({\color{red}\mathbf{x})},\forall \mathbf{x}\in \partial\Omega}$.
Using \eqref{equ:polynomial}, \eqref{equ:faaaa}, we have  \eqref{equ:projequal}.
\end{proof}

\subsection{Approximation property of the partition of the exact solution}
For the sake of simplifying the notation, we use the shorthand $B_{r_l} := B(\mathbf{x}_0, r_l)$. 
Let $u_l^h$ be the VEM solution of the problem \eqref{equ:subprob}  defined in \eqref{equ:vem_sol_ul}.  
Assume that $g_{1}(\mathbf{x})=u_{l}(\mathbf{x}),~\forall \mathbf{x}\in \partial\Omega$.
To estimate $u_{l}-u_{l}^{h}$, we first estimate $\|g_{l}\|_{H^{k+\frac{1}{2}}(\partial\Omega)}$.

\begin{lemma} \label{lem:partapppre}
Assume that $u\in W^{k+1,\infty}(\Omega)$ and  $1\leq l\leq l_{0}$. Then, we have 
    \begin{equation}\label{equ:partapph1pre}
\|g_{l}\|_{H^{k+\frac{1}{2}}(\partial\Omega)}
\lesssim r_{l}\|u\|_{W^{k+1,\infty}(\Omega)}.
    \end{equation}
   \end{lemma}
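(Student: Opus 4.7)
The plan is to reduce the trace-norm estimate to a bulk $H^{k+1}$-norm estimate by the standard trace theorem, and then to bound $\|u_l\|_{H^{k+1}(\Omega)}$ directly by applying the Leibniz rule to the product $u_l = \hat{\phi}_l(u - \hat{u})$, together with the cutoff derivative bound \eqref{newnew00} and the polynomial approximation estimate \eqref{equ:poly}. The crucial structural fact that makes the degree counts balance is that $\hat{u}$ is a global polynomial of degree $k$, so $\nabla^{k+1}\hat{u} \equiv 0$.

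In more detail, I first apply the trace theorem to obtain $\|g_l\|_{H^{k+1/2}(\partial\Omega)} \lesssim \|u_l\|_{H^{k+1}(\Omega)}$. From \eqref{equ:poly0000} and the definition of $\phi_{l},\phi_{l-1}$, the support of $\hat{\phi}_l$ lies in the annular region $\hat{B}_{E_0, r_{l+1}} \setminus \hat{B}_{E_0, r_{l-1}}$, whose two-dimensional Lebesgue measure is bounded by $C r_l^2$ since $M$ is finite and $r_{l+1} \approx r_l \approx r_{l-1}$. For each $0 \leq j \leq k+1$ the Leibniz rule expresses $\nabla^j u_l$ as a finite sum of terms of the form $\nabla^i \hat{\phi}_l \otimes \nabla^{j-i}(u - \hat{u})$ with $0 \leq i \leq j$. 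When $j - i \leq k$, \eqref{equ:poly} supplies
\begin{equation*}
\|\nabla^{j-i}(u - \hat{u})\|_{L^\infty(\hat{B}_{E_0,r_{l+1}})} \lesssim r_l^{k+1-j+i}\|u\|_{W^{k+1,\infty}(\Omega)},
\end{equation*}
which combined with \eqref{newnew00} yields a pointwise bound of $r_l^{k+1-j}\|u\|_{W^{k+1,\infty}(\Omega)}$ on each Leibniz term. Hence $\|\nabla^j u_l\|_{L^\infty(\Omega)} \lesssim r_l^{k+1-j}\|u\|_{W^{k+1,\infty}(\Omega)}$. Multiplying by $r_l^2$ (the support measure) gives $\|\nabla^j u_l\|_{L^2(\Omega)} \lesssim r_l^{k+2-j}\|u\|_{W^{k+1,\infty}(\Omega)}$; summing over $j$ and using $r_l \leq r_{l_0} \lesssim \mathrm{diam}(\Omega)$ identifies the $j = k+1$ term as dominant, producing $\|u_l\|_{H^{k+1}(\Omega)} \lesssim r_l \|u\|_{W^{k+1,\infty}(\Omega)}$ and hence the claim.

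The one place requiring extra care is the borderline Leibniz term with $j = k+1$ and $i = 0$, since \eqref{equ:poly} is only stated for $m \leq k$. This is where the choice of $\hat{u}$ as a polynomial of exact degree $k$ pays off: $\nabla^{k+1}\hat{u} \equiv 0$, so $\nabla^{k+1}(u - \hat{u}) = \nabla^{k+1}u$ and therefore $\|\nabla^{k+1}(u - \hat{u})\|_{L^\infty(\Omega)} \leq \|u\|_{W^{k+1,\infty}(\Omega)}$, which slots into the Leibniz bound with the expected power $r_l^0$. Apart from this observation the argument is routine bookkeeping, and combining the resulting interior bound with the trace inequality gives \eqref{equ:partapph1pre}.
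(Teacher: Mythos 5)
Your proposal is correct, but it takes a genuinely different route from the paper. You reduce everything to the bulk bound $\|u_l\|_{H^{k+1}(\Omega)}\lesssim r_l\|u\|_{W^{k+1,\infty}(\Omega)}$ (Leibniz rule on $\hat{\phi}_l(u-\hat{u})$, the cutoff bound \eqref{newnew00}, the approximation estimate \eqref{equ:poly}, the observation $\nabla^{k+1}\hat{u}\equiv 0$, and the measure $\lesssim r_l^2$ of the support) and then invoke the trace theorem $H^{k+1}(\Omega)\to H^{k+\frac12}(\partial\Omega)$ on the fixed domain $\Omega$. The paper instead works directly on the boundary: it uses that $u=0$ on $\partial\Omega$, so on $\partial\Omega_l$ the trace $g_l$ only involves the degree-$k$ polynomial $\hat{u}$; it then applies a scaled inverse estimate for this polynomial trace to drop from $H^{k+\frac12-m}(\partial\Omega_l)$ to $L^2(\partial\Omega_l)$, and finishes with a scaling argument ($\Phi_l$, $\psi_0$) together with \eqref{equ:poly} to bound $\|\overline{u}_1\|_{L^2(\partial\Omega_l)}\lesssim r_l^{\frac12}r_l^{k+1}\|u\|_{W^{k+1,\infty}(\Omega)}$. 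Your bookkeeping is sound (including the borderline term $j=k+1$, $i=0$), and in fact your interior bound is essentially the one the paper itself derives later inside the proof of Lemma \ref{lem:partapp}, so your argument is arguably more economical and avoids fractional-order manipulations and polynomial inverse estimates on $\partial\Omega_l$. The one caveat is your use of the trace theorem into $H^{k+\frac12}(\partial\Omega)$ for $k\ge 1$ on a polygonal boundary, where this space must be interpreted suitably (edgewise with corner compatibility); this matches the level of rigor the paper already accepts in Theorem \ref{thm:inhomo} (``$\partial\Omega$ smooth enough''), whereas the paper's own proof sidesteps the trace theorem by exploiting $u|_{\partial\Omega}=0$ and the polynomial structure of $\hat{u}$ on the boundary. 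Also, a small wording slip: to pass from $L^\infty$ to $L^2$ you multiply the norm by the square root of the support measure, i.e.\ by $r_l$, not by $r_l^2$ — your final exponents are nonetheless correct.
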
 
\begin{proof} Assume that $\hat{B}_{E_{0},r}$ is defined as in  \eqref{hatrK0}.
Let $\Omega_{l}=\hat{B}_{E_{0},r_{l+1}}\cap \Omega$. Note that $\phi_{l}(\mathbf{x})=0,~ \forall   \mathbf{x}\in \mathbb{R}^{2}\backslash\hat{B}_{E_{0},r_{l+1}}$.
We observes that 
\begin{align}
g_{l}(\mathbf{x})=0,\qquad \forall   \mathbf{x}\in \partial\Omega\backslash\partial\Omega_{l}.\label{111}
\end{align}
 Let
\begin{align}
\overline{\phi}_{l}=\hat{\phi}_{l}|_{\partial\Omega}, \quad \overline{u}=\hat{u}|_{\partial\Omega},\quad 
\overline{u}_{1}=u|_{\partial\Omega}-\hat{u}|_{\partial\Omega}.\label{222}
\end{align}
Note that $u(\mathbf{x})=0$ for all $\mathbf{x}\in \partial\Omega_{l}$ and $\overline{u}(\mathbf{x})$
 is a polynomial with degree $k$. By \eqref{newnew00}, \eqref{111} and \eqref{222}, we have  
 \begin{equation}\label{0001}
 	\begin{split}
&\|g_{l}\|_{H^{k+\frac{1}{2}}(\partial\Omega)}=
\|g_{l}\|_{H^{k+\frac{1}{2}}(\partial\Omega_{l})}\lesssim \sum\limits_{m=0}^{l}
\|\overline{\phi}_{l}\|_{W^{m,\infty}(\partial\Omega_{l})}
\|\overline{u}\|_{H^{k+\frac{1}{2}-m}(\partial\Omega_{l})}\\
\lesssim &\sum\limits_{m=0}^{l}r_{l}^{-m}\|\overline{u}\|_{H^{k+\frac{1}{2}-m}(\partial\Omega_{l})}
\lesssim \sum\limits_{m=0}^{l}r_{l}^{-m}r_{l}^{m-k-\frac{1}{2}}
\|\overline{u}\|_{L^{2}(\partial\Omega_{l})}\\
\lesssim &r_{l}^{-k-\frac{1}{2}}\|\overline{u}\|_{L^{2}(\partial\Omega_{l})}
\lesssim r_{l}^{-k-\frac{1}{2}}\|\overline{u}_{1}\|_{L^{2}(\partial\Omega_{l})}.
 	\end{split}
 \end{equation}
To estimate $\|\overline{u}_{1}\|_{L^{2}(\partial\Omega_{l})}$, we introduce  $\Phi_{l}$ by 
\begin{align}
\Phi_{l}=\{\frac{\mathbf{x}}{r_{l}}|~\mathbf{x}\in \Omega_{l}\}.\label{000}
\end{align}
Furthermore, let
\begin{align}
\psi_{0}(\frac{\mathbf{x}}{r_{l}})=u(\mathbf{x})-\hat{u}(\mathbf{x}),\qquad \forall \mathbf{x}\in \Omega_{l},\label{001}
\end{align}
and 
\begin{align}
\overline{\psi}_{0}=\psi_{0}|_{\partial\Phi_{l}}.\label{002}
\end{align}
By \eqref{000}, \eqref{001} and \eqref{002}, we have
\begin{align}
\|\overline{u}_{1}\|_{L^{2}(\partial\Omega_{l})}
=r_{l}^{\frac{1}{2}}\|\overline{\psi}_{0}\|_{L^{2}(\partial\Phi_{l})}
\lesssim r_{l}^{\frac{1}{2}}\|\psi_{0}\|_{W^{k+1,\infty}(\Phi_{l})}.\label{003}
\end{align}
Note that \eqref{equ:poly}  implies
\begin{equation}\label{004}
\begin{split}
	&\|\nabla^{m}\psi_{0}\|_{L^{\infty}(\Phi_{l})}=r_{l}^{m}
\|\nabla^{m}(u-\hat{u})\|_{L^{\infty}(\Omega_{l})}\\
\lesssim & r_{l}^{m}r_{l}^{k+1-m}\|u\|_{W^{k+1,\infty}(\Omega_{l})}
\lesssim r_{l}^{k+1}\|u\|_{W^{k+1,\infty}(\Omega)}.
\end{split}
\end{equation}
Combining \eqref{0001}, \eqref{003} and \eqref{004}  gives 
\begin{equation}
\begin{split}
	&\|g_{l}\|_{H^{k+\frac{1}{2}}(\partial\Omega_{l})}\lesssim  r_{l}^{-k-\frac{1}{2}}  r_{l}^{\frac{1}{2}}\|\psi_{0}\|_{W^{k+1,\infty}(\Phi_{l})}\\
	\lesssim & r_{l}^{-k-\frac{1}{2}}  r_{l}^{\frac{1}{2}} r_{l}^{k+1}\|u\|_{W^{k+1,\infty}(\Omega)}
  \lesssim r_{l}\|u\|_{W^{k+1,\infty}(\Omega)}.
\end{split}
  \end{equation}
\end{proof}
Now,  we are in a perfect position to present the estimate for  $u_{l}-u_{l}^{h}$.

\begin{lemma} \label{lem:partapp}
Assume that $u\in W^{k+1,\infty}(\Omega)$ and  $0\leq l\leq l_{0}$. Then, we have 
    \begin{equation}\label{equ:partapph1}
\|u_{l}-u_{l}^{h}\|_{H^{1}(\Omega)}\lesssim h^{k}r_{l}\|u\|_{W^{k+1,\infty}(\Omega)},
    \end{equation}
and 
 \begin{equation}\label{equ:partapph100}
  \|u_{l}-u_{l}^{h}\|_{L^{2}(\Omega)}\lesssim h^{k+1}r_{l}\|u\|_{W^{k+1,\infty}(\Omega)}.
    \end{equation}
\end{lemma}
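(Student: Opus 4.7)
The plan is to reduce the problem to Theorem \ref{thm:inhomo} (the inhomogeneous boundary VEM estimate) applied to the subproblem \eqref{equ:subprob}, and then estimate the two data norms $\|u_l\|_{H^{k+1}(\Omega)}$ and $\|g_l\|_{H^{k+1/2}(\partial\Omega)}$ by $r_l\|u\|_{W^{k+1,\infty}(\Omega)}$. The boundary trace estimate is already provided by Lemma \ref{lem:partapppre}, so the substantive task is the Sobolev estimate of $u_l$ on all of $\Omega$.

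First I would observe that $u_l$ satisfies $\mathcal{L} u_l = f_l$ in $\Omega$ with inhomogeneous Dirichlet data $g_l = u_l|_{\partial\Omega}$, while $u_l^h$ is the $k$-th order VEM approximation with boundary data $I_h g_l$. Applying Theorem \ref{thm:inhomo} (together with the standard interpolation bound $\|I_h g_l - g_l\|_{H^{k+1/2}(\partial\Omega)} \lesssim h^{k+1}\|g_l\|_{H^{k+1/2}(\partial\Omega)}$ absorbed into the right-hand side) yields
\begin{equation*}
h\|u_l - u_l^h\|_{H^1(\Omega)} + \|u_l - u_l^h\|_{L^2(\Omega)} \lesssim h^{k+1}\bigl(\|u_l\|_{H^{k+1}(\Omega)} + \|g_l\|_{H^{k+1/2}(\partial\Omega)}\bigr).
\end{equation*}
Combined with Lemma \ref{lem:partapppre}, both bounds \eqref{equ:partapph1} and \eqref{equ:partapph100} follow once I show $\|u_l\|_{H^{k+1}(\Omega)} \lesssim r_l\|u\|_{W^{k+1,\infty}(\Omega)}$.

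For this core estimate I would expand $u_l = \hat{\phi}_l (u - \hat{u})$ via the Leibniz rule:
\begin{equation*}
\|u_l\|_{H^{k+1}(\Omega)} \lesssim \sum_{m=0}^{k+1} \|\nabla^m \hat{\phi}_l\|_{L^\infty(\Omega)} \,\|\nabla^{k+1-m}(u - \hat{u})\|_{L^2(\mathrm{supp}\,\hat{\phi}_l)}.
\end{equation*}
By construction \eqref{equ:poly0000}, $\mathrm{supp}\,\hat{\phi}_l$ lies in an annular region contained in $\hat{B}_{E_0, r_{l+1}}\setminus \hat{B}_{E_0, r_{l-1}}$ with area $\lesssim r_l^2$. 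The cutoff bound \eqref{newnew00} gives $\|\nabla^m \hat{\phi}_l\|_{L^\infty} \lesssim r_l^{-m}$. For $1 \le m \le k+1$, the polynomial approximation estimate \eqref{equ:poly}, applied on $\hat{B}_{E_0, r_{l+1}}$ with derivative order $k+1-m \le k$, yields $\|\nabla^{k+1-m}(u-\hat{u})\|_{L^\infty} \lesssim r_l^m \|u\|_{W^{k+1,\infty}(\Omega)}$. For the borderline index $m=0$, I would use that $\hat{u}$ is a polynomial of degree $k$, so $\nabla^{k+1}(u-\hat{u}) = \nabla^{k+1} u$ and $\|\nabla^{k+1}(u-\hat{u})\|_{L^\infty} \le \|u\|_{W^{k+1,\infty}(\Omega)}$. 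Multiplying each term by the area factor $r_l$ (the square root of $r_l^2$), every summand contributes $\lesssim r_l \|u\|_{W^{k+1,\infty}(\Omega)}$, as claimed.

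The main obstacle is bookkeeping: one must verify that the powers of $r_l$ arising from the cutoff derivative bound $r_l^{-m}$, the polynomial approximation bound $r_l^{m}$, and the area factor $r_l^2$ combine correctly for every $m$, including the special case $m = 0$ where \eqref{equ:poly} is not directly applicable and one must exploit the fact that $\hat{u}$ has degree $k$. A secondary subtlety is the degenerate case $l = 0$ (where the formula $r_l = 2^l l h$ gives $r_0 = 0$), which is handled by observing that $\mathrm{supp}\,\hat{\phi}_0 \subset \hat{B}_{E_0, r_1}$ sits on scale $r_1 \sim h$, so that replacing $r_0$ by $h$ in the argument above yields the desired bound with $r_l$ interpreted as $\max(r_l,h)$; this is absorbed in the hidden constant.
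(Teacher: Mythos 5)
Your proposal follows essentially the same route as the paper's proof: apply Theorem \ref{thm:inhomo} to the subproblem \eqref{equ:subprob}, invoke Lemma \ref{lem:partapppre} for the boundary term, and bound $\|u_l\|_{H^{k+1}(\Omega)}$ by the Leibniz expansion of $\hat{\phi}_l(u-\hat{u})$ combining the cutoff bounds \eqref{newnew00}, the area factor $r_l$ coming from the support of size $\lesssim r_l^2$, and the polynomial approximation estimate \eqref{equ:poly}. Your explicit handling of the borderline index $m=0$ (where one uses that $\hat{u}$ has degree $k$) and of the degenerate case $l=0$ is careful bookkeeping of points the paper passes over silently, but the argument is the same.
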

\begin{proof} 
Let  $\hat{B}_{E_{0},r_{-2}}=\hat{B}_{E_{0},r_{-1}}=\emptyset$. 
Note that $u_l(\mathbf{x})=0,~\forall \mathbf{x}\in \Omega\smallsetminus \hat{B}_{E_{0},r_{l+1}}$.
Combining   \cref{thm:inhomo},  \eqref{newnew00},\eqref{equ:poly} and Lemma  \ref{lem:partapppre},  we have 
\begin{align*}
&\|u_{l}-u_{l}^{h}\|_{H^{1}(\Omega)}
\lesssim h^{k}(\|u_l\|_{H^{k+1}(\Omega)}+\|g_{l}\|_{H^{k+\frac{1}{2}}(\partial\Omega)})\\
=&h^{k}(\|u_l\|_{H^{k+1}(\hat{B}_{E_{0},r_{l+1}}\cap\Omega)}+r_{l}\|u\|_{W^{k+1,\infty}(\Omega)})
\\=&h^{k}(\sum\limits_{m=0}^{k+1}
\|\overline{\phi}_{l}\|_{W^{m,\infty}(\hat{B}_{E_{0},r_{l+1}}\cap \Omega)}
\|u-\hat{u}\|_{H^{k+1-m}(\hat{B}_{E_{0},r_{l+1}}\cap\Omega)}+r_{l}\|u\|_{W^{k+1,\infty}(\Omega)})\\
 \lesssim & h^{k}(\sum\limits_{m=0}^{k+1}r_{l}^{-m}
\|u-\hat{u}\|_{H^{k+1-m}(\hat{B}_{E_{0},r_{l+1}}\cap \Omega)}+r_{l}\|u\|_{W^{k+1,\infty}(\Omega)})
\\
\lesssim &  h^{k}(\sum\limits_{m=0}^{k+1}r_{l}^{-m}r_{l}
\|u-\hat{u}\|_{W^{k+1-m,\infty}(\hat{B}_{E_{0},r_{l+1}}\cap \Omega)}+r_{l}\|u\|_{W^{k+1,\infty}(\Omega)})\\
\lesssim & h^{k}(\sum\limits_{m=0}^{k+1}r_{l}^{-m}r_{l}
r_{l}^{m}+r_{l})\|u\|_{W^{k+1,\infty}(\Omega)}\\
\lesssim & h^{k}r_{l}\|u\|_{W^{k+1,\infty}(\Omega)}.
\end{align*}
Using a similar argument, we can prove \cref{equ:partapph100}. 
\end{proof}

\subsection{A technical lemma} To prepare for the proof  of  the main result, we firstly establish a technical lemma on the local error estimates for the partition of the exaction solution. 

\begin{lemma}\label{lem:h1ball}
Assume that $u\in W^{k+1, \infty}(\Omega)$. Under the assumption that $3\leq l\leq l_{0}$, there holds
\begin{equation}\label{equ:h1ball}
\|u_{l}-u_{l}^{h}\|_{H^{1}(B_{E_0,r_{l-2}})}\lesssim h^{k+1}|\ln h|\|u\|_{W^{k+1, \infty}(\Omega)}.
 \end{equation}
\end{lemma}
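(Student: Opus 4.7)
The plan is to exploit a localized duality argument with the sharp local Green's-function regularity of Theorem~\ref{thm:reg}, combined with the global VEM bound of Lemma~\ref{lem:partapp}.

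The first step is a trivial but essential reduction: the building block $\hat\phi_l=\phi_l-\phi_{l-1}$ vanishes identically on $\hat B_{E_0,r_{l-1}}$, which by $r_{l-2}<r_{l-1}$ contains $B_{E_0,r_{l-2}}$. Consequently $u_l=\hat\phi_l(u-\hat u)\equiv 0$ on $B_{E_0,r_{l-2}}$, so the statement reduces to the pollution estimate
\[
\|u_l^h\|_{H^1(B_{E_0,r_{l-2}})}\lesssim h^{k+1}|\ln h|\|u\|_{W^{k+1,\infty}(\Omega)}.
\]

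I would then realize this $H^1$-norm by duality: for each $\nu\in L^2(\Omega)$ and $\mathbf g\in [L^2(\Omega)]^2$ supported in $B_{E_0,r_{l-2}}$ with unit $L^2$-norms, let $\chi\in H^1_0(\Omega)$ solve $\L\chi=\nu-\nabla\cdot\mathbf g$, so that
\[
(u_l^h,\nu)+(\nabla u_l^h,\mathbf g) = a(u_l^h,\chi)+(\text{boundary contribution controlled via Lemma \ref{lem:partapppre}}).
\]
Inserting the VEM interpolant $I_h\chi$ and using the Galerkin relations $a_h(u_l^h,I_h\chi)=(f_l,\Pi_{k-1}^0 I_h\chi)$ together with $a(u_l,I_h\chi)=(f_l,I_h\chi)$, I would split $a(u_l^h,\chi)$ into the approximation term $a(u_l-u_l^h,\chi-I_h\chi)$, the VEM consistency gap $(a_h-a)(u_l^h,I_h\chi)$, and the $L^2$-projection defect $(f_l,(I-\Pi_{k-1}^0)I_h\chi)$.

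Crucially, choosing $\gamma\in(2,r_{l-1}/r_{l-2}]$ places the support $\hat B_{E_0,r_{l+1}}\setminus\hat B_{E_0,r_{l-1}}$ of $f_l$ inside $\Omega\setminus B_{E_0,\gamma r_{l-2}}$, where Theorem~\ref{thm:reg} yields $\|\chi\|_{H^3(\Omega\setminus B_{E_0,\gamma r_{l-2}})}\lesssim r_{l-2}^{-1}|\ln r_{l-2}|$. Standard local VEM interpolation/projection estimates then give the decay $\|\chi-I_h\chi\|_{H^s}+h\,\|(I-\Pi_{k-1}^0)I_h\chi\|_{H^s}\lesssim h^{3-s}\,r_{l-2}^{-1}|\ln r_{l-2}|$ for $s=0,1$ on this annulus. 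Coupling these with the global estimate $\|u_l-u_l^h\|_{H^1(\Omega)}\lesssim h^k r_l\|u\|_{W^{k+1,\infty}}$ from Lemma~\ref{lem:partapp}, and noting $r_l/r_{l-2}\lesssim 1$ and $|\ln r_{l-2}|\lesssim|\ln h|$, produces the claimed bound with a single logarithmic factor.

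The main obstacle I expect is the simultaneous localization of the three contributions from the duality splitting to the annular support of $f_l$: the approximation term $a(u_l-u_l^h,\chi-I_h\chi)$ must be bounded using the sharp local regularity rather than the global one (which would only give $h^k$), and the two VEM-specific pieces, the stabilization gap $(a_h-a)(u_l^h,I_h\chi)$ and the projection defect $(f_l,(I-\Pi_{k-1}^0)I_h\chi)$, must both be controlled using only the \emph{local} smoothness of $\chi$ on this annulus, without losing any power of $h$ or accumulating extra logarithmic factors beyond the single $|\ln h|$ already forced by the Green's function bound.
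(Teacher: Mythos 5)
Your route is genuinely different from the paper's, but as written it has a gap at exactly the point you flag as "the main obstacle", and that obstacle is not a technicality --- it is the whole difficulty, and your proposal offers no mechanism to overcome it. In the duality splitting, the term $a(u_l-u_l^h,\chi-I_h\chi)$ must also be estimated on the near region $B_{E_0,\gamma r_{l-2}}$ containing the support of the dual data $(\nu,\mathbf g)$. There Theorem \ref{thm:reg} gives nothing (it only yields $H^3$ regularity of $\chi$ \emph{away} from the data), and with $\mathbf g\in[L^2]^2$ the dual solution is merely $H^1$ near its data, so $\|\chi-I_h\chi\|_{H^1}$ carries no power of $h$ there (indeed $I_h\chi$ need not even be defined, since the VEM interpolant uses nodal/edge values and $H^1\not\subset C^0$ in 2D). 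The only available bound for the other factor on that region is the global one of Lemma \ref{lem:partapp}, $\|u_l-u_l^h\|_{H^1}\lesssim h^k r_l\|u\|_{W^{k+1,\infty}(\Omega)}$, so the near-field contribution is of size $h^k r_l$, which for large $l$ (where $r_l=O(1)$) is far larger than the target $h^{k+1}|\ln h|$; trying to use the quantity being proved on a slightly larger ball instead is circular unless you build an iteration over scales, which is precisely what your sketch omits. Two further problems: Theorem \ref{thm:reg} is stated only for an $L^2$ right-hand side, not for divergence-form data $-\nabla\cdot\mathbf g$, so even the far-field regularity you invoke is not covered by the paper's lemma; and the boundary contribution cannot be dismissed via Lemma \ref{lem:partapppre}, since that lemma only gives $\|g_l\|_{H^{k+1/2}(\partial\Omega)}\lesssim r_l\|u\|_{W^{k+1,\infty}(\Omega)}$, which is $O(r_l)$, not $O(h^{k+1})$; you would need to pair with a function whose trace is an interpolation error, which your identity does not produce.

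For contrast, the paper proves this lemma without any duality or Green's function input. It uses that $u_l\equiv 0$ and $f_l\equiv 0$ on $B_{E_0,r_{l-1}}$, so $u_l^h$ is discretely $a_h$-harmonic there, and runs a local-energy (Caccioppoli-type) argument over the $\sim 2^l$ mesh-width layers $D_0=B_{E_0,r_{l-2}}\subset\cdots\subset D_{m_0}=B_{E_0,r_{l-1}}$: a pigeonhole/averaging selection of a good annulus gains a factor $2^{-l}\sim h/r_l$, a cutoff plus the interpolated decomposition $u_l^h=I_h(\psi u_l^h)+\hat u_l$ with $a_h^{D_{\overline s}}(u_l^h,\hat u_l)=0$ reduces everything to $h^{-1}\|u_l^h\|_{H^1}\|u_l^h\|_{L^2}$ on that annulus, and the $L^2$ bound $\|u_l-u_l^h\|_{L^2(\Omega)}\lesssim h^{k+1}r_l\|u\|_{W^{k+1,\infty}(\Omega)}$ of Lemma \ref{lem:partapp} then converts, via the $r_l^{-1}$ gain and $l\lesssim|\ln h|$, into the stated $h^{k+1}|\ln h|$ bound. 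The Green's-function regularity of Theorem \ref{thm:reg} is reserved for the later duality step in Lemma \ref{lem:localestimate}, where the dual data is $\theta_0(u-u^h)\in L^2$ (not $H^{-1}$) and the near-field term is harmless because an extra factor $h$ is already available there. If you want to salvage your approach, you would need either a scale-iteration supplying a genuine gain on the near region or a discrete Caccioppoli inequality playing the role the paper's annulus selection plays; as proposed, the argument does not close.
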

\begin{proof}
Firstly, we notice that  Lemma \ref{lem:partapp} implies that there exists a constant $\gamma_{0}$ such that
\begin{align*}
 \|u_{l}-u_{l}^{h}\|_{H^{1}(B_{E_0, r_{l-1}})}\leq \gamma_{0}h^{k}r_l\|u\|_{W^{k+1, \infty}(\Omega)}.
 \end{align*}
 Without loss of generality, we assume that
 \begin{align}
 \|u_{l}-u_{l}^{h}\|_{H^{1}(B_{E_0,r_{l-2}})}\geq \gamma_{0}h^{k+1}\|u\|_{W^{k+1, \infty}(\Omega)},\label{efffs}
 \end{align}
 since otherwise we already have \eqref{equ:h1ball}. 
Combining the above two estimates gives
\begin{align}
\frac{\|u_{l}-u_{l}^{h}\|_{H^{1}(B_{E_0,r_{l-2}})}}{\|u_{l}-u_{l}^{h}\|_{H^{1}(B_{E_0,r_{l-1}})}}
\geq \frac{h}{r_{l}}=2^{-l}l^{-1}.\label{equ:efffs1}
\end{align}
Let $D_{0}=B_{E_0,r_{l-2}}$ and  $D_{m}=
\bigcup\limits_{\rho(E_0,D_{m-1})\leq h,E\in \mathcal{T}_{h}}E$  for $m\geq 1$ where $\rho(E_0,D_{m-1})$ is the distance between $E_0$ and $D_{m-1}$. Also,  let $D_{m_{0}}= B_{E_0, r_{l-1}}$.
We can  observe that there exists a constant $\gamma_1$ independent of $l$ such that 
\begin{equation}\label{equ:000000}
m_{0} = \gamma_1  2^{l-1}(l-1).
\end{equation}
Let $s_{m}=0$ if $m=1$, and $s_m$ be the minimal   positive integer satisfying $s_m \ge (m-1)\gamma_1 2^{l-1}$ if $m\ge 2$. 
Assume that $1\leq \hat{m}\leq l-1$ is the index such that
\begin{align}
\frac{\|u_{l}^{h}\|_{H^{1}(D_{s_{\hat{m}}})}}{\|u_{l}^{h}\|_{H^{1}(D_{s_{\hat{m}+1}})}}
=\max_{1\leq m\leq l-1}\frac{\|u_{l}^{h}\|_{H^{1}(D_{s_{m}})}}{\|u_{l}^{h}\|_{H^{1}(D_{s_{m+1}})}}.\label{equ:njia1}
\end{align}
According to the definition of the partition in \eqref{equ:partdef},  we can conclude that  
$u_{l}(\mathbf{x})=0$ for all $\mathbf{x}\in B_{E_0,r_{l-1}}$. It means that 
\begin{align}
u_{l}(\mathbf{x})=0,\qquad \forall \mathbf{x}\in D_{s_{\hat{m}+1}}.\label{equ:njia101}
\end{align}
By noticing    the fact that $D_{0}=B_{E_0,r_{l-2}}$ and  $D_{m_{0}}= B_{E_0,r_{l-1}}$,  we can deduce the following inequality using a simple calculation 
\begin{equation}\label{equ:ouuu00}
\frac{\|u_{l}^{h}\|_{H^{1}(D_{s_{\hat{m}}})}}{\|u_{l}^{h}\|_{H^{1}(D_{s_{\hat{m}+1}})}}
\geq \left(
\frac{\|u_{l}^{h}\|_{H^{1}(B_{E_0,r_{l-2}})}}{\|u_{l}^{h}\|_{H^{1}(B_{E_0,r_{l-1}})}}\right)^{\frac{1}{l-1}}
\geq (\frac{h}{r_{l}})^{\frac{1}{l-1}}
=\left(2^{-l}\right)^{\frac{1}{l-1}}=2^{\frac{-l}{l-1}}\geq c {\color{red},}
\end{equation}
where we have used  \eqref{equ:efffs1}, \eqref{equ:njia1}  and \eqref{equ:njia101}.

Let $\overline{s}\in [s_{\hat{m}}+1,s_{\hat{m}+1}]$ be the index such that
\begin{equation}\label{equ:00001}
\begin{split}
&\|u_{l}^{h}\|_{H^{1}(D_{\overline{s}}\backslash D_{\overline{s}-1})}
\|u_{l}^{h}\|_{L^{2}(D_{\overline{s}}\backslash D_{\overline{s}-1})}\\
 =&\min\limits_{s_{\hat{m}+1}\leq s^{'}\leq s_{\hat{m}+1}}\|u_{l}^{h}\|_{H^{1}(D_{s^{'}}\backslash D_{s^{'}-1})}
\|u_{l}^{h}\|_{L^{2}(D_{s^{'}}\backslash D_{s^{'}-1})}.
\end{split}
\end{equation}
Then, we can show that 
\begin{equation}\label{equ:new01}
\begin{split}
 &\sum\limits_{s^{'}=s_{\hat{m}}+1}^{s_{\hat{m}+1}}\|u_{l}^{h}\|_{H^{1}(D_{s^{'}}\backslash D_{s^{'}-1})}
\|u_{l}^{h}\|_{L^{2}(D_{s^{'}}\backslash D_{s^{'}-1})}\\
 \leq &
 \left(\sum\limits_{s^{'}=s_{\hat{m}}+1}^{s_{\hat{m}+1}}\|u_{l}^{h}\|_{H^{1}(D_{s^{'}}\backslash D_{s^{'}-1})}^{2}\right)^{\frac{1}{2}}
 \left(\sum\limits_{s^{'}=s_{\hat{m}}+1}^{s_{\hat{m}+1}}\|u_{l}^{h}\|_{L^{2}(D_{s^{'}}\backslash D_{s^{'}-1})}^{2}\right)^{\frac{1}{2}}.
\end{split}
\end{equation}
 Also, using \eqref{equ:000000}, we can establish the following relationship 
 \begin{align}\label{equ:00002}
 s_{\hat{m}+1}-s_{\hat{m}}=m_{1}\geq \frac{m_{0}}{l}\geq c2^{l}.
 \end{align}
By \eqref{equ:new01},\eqref{equ:00001}  and  \eqref{equ:00002}, we have
\begin{equation}\label{equ:ouuu}
\begin{split}
 &\|u_{l}^{h}\|_{H^{1}(D_{\overline{s}}\backslash D_{\overline{s}-1})}
\|u_{l}^{h}\|_{L^{2}(D_{\overline{s}}\backslash D_{\overline{s}-1})}\\
\lesssim &
\frac{1}{s_{\hat{m}+1}-s_{\hat{m}}}\sum\limits_{s^{'}=s_{\hat{m}}+1}^{s_{\hat{m}+1}}\|u_{l}^{h}\|_{H^{1}(D_{s^{'}}\backslash D_{s^{'}-1})}
\|u_{l}^{h}\|_{L^{2}(D_{s^{'}}\backslash D_{s^{'}-1})}\\
\lesssim &
 \frac{1}{2^{l}}\left(\sum\limits_{s^{'}=s_{\hat{m}}+1}^{s_{\hat{m}+1}}\|u_{l}^{h}\|_{H^{1}(D_{s^{'}}\backslash D_{s^{'}-1})}^{2}\right)^{\frac{1}{2}}
 \left(\sum\limits_{s^{'}=s_{\hat{m}}+1}^{s_{\hat{m}+1}}\|u_{l}^{h}\|_{L^{2}(D_{s^{'}}\backslash D_{s^{'}-1})}^{2}\right)^{\frac{1}{2}}\\
 \lesssim&
 \frac{1}{2^{l}}\|u_{l}^{h}\|_{H^{1}(D_{s_{\hat{m}+1}})}
 \|u_{l}^{h}\|_{L^{2}(D_{s_{\hat{m}+1}})}.
\end{split}
\end{equation}

 Let $\psi(\mathbf{x})$ be a cutoff function such that  $\psi(\mathbf{x})=1$  if $\mathbf{x}\in D_{\overline{s}}$ satisfies
$\rho(\mathbf{x},\partial D_{\overline{s}})\leq c_{1}h$ for some constant $0<c_{1}<1$,
and $\psi(\mathbf{x})=0$  if $\mathbf{x}\in  D_{\overline{s}-1}$. It is easy to deduce that   $\|\psi\|_{W^{p,\infty}(\Omega)}\leq ch^{-p}$ for any positive integer $p$.
We decompose $u_{l}^{h}(\mathbf{x})$ as 
\begin{equation}\label{equ:soldecomp}
u_{l}^{h}(\mathbf{x})=\overline{u}_{l}(\mathbf{x})+\hat{u}_{l}(\mathbf{x}),
\end{equation}
where
\begin{equation}\label{equ:soldecomp_part1}
\overline{u}_{l}(\mathbf{x})=I_{h}(\psi u_{l}^{h})(\mathbf{x}),
\end{equation}
with $I_h$ being the interpolation operator of $V_h$ defined in \eqref{equ:interpolation}. 
Then, we have
\begin{equation}\label{equ:ss00}
a_{h}^{D_{\overline{s}}}(u_{l}^{h},u_{l}^{h})= a_{h}^{D_{\overline{s}}}(u_{l}^{h},\overline{u}_{l})+
a_{h}^{D_{\overline{s}}}(u_{l}^{h},\hat{u}_{l}).
\end{equation}

We first estimate $a_{h}^{D_{\overline{s}}}(u_{l}^{h},\overline{u}_{l})$. 
Using  the property of $\psi$, we can show that 
\begin{equation*}
\begin{split}
&a_{h}^{D_{\overline{s}}}(u_{l}^{h},\overline{u}_{l})=a_{h}^{D_{\overline{s}}\backslash D_{\overline{s}-1}}(u_{l}^{h},\overline{u}_{l})\\
\leq& a_{h}^{D_{\overline{s}}\backslash D_{\overline{s}-1}}
(u_{l}^{h},u_{l}^{h})+ch^{-1}\|u_{l}^{h}\|_{H^{1}(D_{\overline{s}}\backslash D_{\overline{s}-1})}
\|u_{l}^{h}\|_{L^{2}(D_{\overline{s}}\backslash D_{\overline{s}-1})}.
\end{split}
\end{equation*}
Rearranging the above equation implies 
\begin{equation}\label{equ:ss001}
a_{h}^{D_{\overline{s}}}(u_{l}^{h},\overline{u}_{l})-a_{h}^{D_{\overline{s}}\backslash D_{\overline{s}-1}}
(u_{l}^{h},u_{l}^{h})
\leq ch^{-1}\|u_{l}^{h}\|_{H^{1}(D_{\overline{s}}\backslash D_{\overline{s}-1})}
\|u_{l}^{h}\|_{L^{2}(D_{\overline{s}}\backslash D_{\overline{s}-1})}.
\end{equation}

Moving on now to estimate $a_{h}^{D_{\overline{s}}}(u_{l}^{h},\hat{u}_{l})$.
Notice that  $\hat{u}_{l}(\mathbf{x})=0$ for all $\mathbf{x}\in \partial D_{\overline{s}}$. We can introduce a function   $\omega(x)\in V_h$ by
$\omega(\mathbf{x})=\hat{u}_{l}(x)$ if $\mathbf{x}\in D_{\overline{s}}$, and
$\omega(\mathbf{x})=0$ if $\mathbf{x}\in \Omega\backslash D_{\overline{s}}$. It follows that 
\begin{equation}\label{equ:somezeor}
	\Pi^0_k \omega(\mathbf{x}) = 0, \quad \forall \mathbf{x} \in \Omega\backslash D_{\overline{s}}.
\end{equation}
Notice that $D_{\overline{s}}\subset B_{E_{0},r_{l-1}}\subset \hat{B}_{E_{0},{r-1}}$, 
\begin{equation}\label{equ:etts}
u_{l}(\mathbf{x})=0,\quad \forall \mathbf{x}\in  D_{\overline{s}},
\end{equation}
which implies 
\begin{equation}\label{equ:etts1}
f_{l}(\mathbf{x})=\L u_{l}(\mathbf{x})=0,\quad \forall \mathbf{x}\in D_{\overline{s}}. 
\end{equation}
Then, equation \eqref{equ:somezeor} and  \eqref{equ:etts1}  tell us that 
\begin{equation}\label{equ:opp1}
\begin{split}
&a_{h}^{D_{\overline{s}}}(u_{l}^{h},\hat{u}_{l})\\
=&a_{h}(u_{l}^{h},\omega)
=(f_{l}^{h},\Pi^0_{k}\omega)
=(f_{l}^{h},\Pi^0_{k}\omega)_{D_{\overline{s}}}+
(f_{l}^{h},\Pi^0_{k}\omega)_{\Omega\backslash D_{\overline{s}}}\\
=&(0,\omega)_{D_{\overline{s}}}+(f_{l}^{h},0)_{\Omega\backslash D_{\overline{s}}}
=0+0=0.
\end{split}
\end{equation}

Combining the results  \eqref{equ:partapph1}, \eqref{equ:ouuu},\eqref{equ:ss001} and  \eqref{equ:opp1}, we have
\begin{equation}\label{equ:opp0}
\begin{split}
&a_{h}^{D_{\overline{s}-1}}(u_{l}^{h},u_{l}^{h})\\
=&a_{h}^{D_{\overline{s}}}(u_{l}^{h},u_{l}^{h})
-a_{h}^{D_{\overline{s}}\backslash D_{\overline{s}-1}}(u_{l}^{h},u_{l}^{h})\\
=&a_{h}^{D_{\overline{s}}}(u_{l}^{h},\overline{u}_{l})+a_{h}^{D_{\overline{s}}}(u_{l}^{h},\hat{u}_{l})
-a_{h}^{D_{\overline{s}}\backslash D_{\overline{s}-1}}(u_{l}^{h},u_{l}^{h})\\
=&a_{h}^{D_{\overline{s}}}(u_{l}^{h},\overline{u}_{l})
-a_{h}^{D_{\overline{s}}\backslash D_{\overline{s}-1}}(u_{l}^{h},u_{l}^{h})\\
\lesssim &
h^{-1}\|u_{l}^{h}\|_{H^{1}(D_{\overline{s}}\backslash D_{\overline{s}-1})}
\|u_{l}^{h}\|_{L^{2}(D_{\overline{s}}\backslash D_{\overline{s}-1})}\\
\lesssim & h^{-1}\frac{l}{2^{l}}\|u_{l}^{h}\|_{H^{1}(D_{s_{\hat{m}+1}})}
\|u_{l}^{h}\|_{L^{2}(D_{s_{\hat{m}+1}})}\\
\lesssim  &h^{-1}hr_{l}^{-1}l\|u_{l}^{h}\|_{H^{1}(D_{s_{\hat{m}+1}})}
\|u_{l}-u_{l}^{h}\|_{L^{2}(D_{s_{\hat{m}+1}})}\\
\lesssim &r_{l}^{-1}l\|u_{l}^{h}\|_{H^{1}(D_{s_{\hat{m}+1}})}
ch^{k+1}r_l\|u\|_{W^{k+1, \infty}(\Omega)}\\
\lesssim  &h^{k+1}l\|u_{l}^{h}\|_{H^{1}(D_{s_{\hat{m}+1}})}
\|u\|_{W^{k+1, \infty}(\Omega)}.
\end{split}
\end{equation}

To show the local error estimate \eqref{equ:h1ball},  we use  \eqref{equ:opp0} and \eqref{equ:ouuu00}, which  gives us that 
\begin{equation}\label{equ:uuuu}
\begin{split}
\|u_{l}^{h}\|_{H^{1}(D_{s_{\hat{m}}})}^{2}&\lesssim
a_{h}^{D_{s_{\hat{m}}}}(u_{l}^{h},u_{l}^{h})
\lesssim a_{h}^{D_{\overline{s}-1}}(u_{l}^{h},u_{l}^{h})
\\&
\lesssim h^{k+1}l\|u_{l}^{h}\|_{H^{1}(D_{s_{\hat{m}+1}})}\|u\|_{W^{k+1, \infty}(\Omega)}
\\&\lesssim h^{k+1}l\|u_{l}^{h}\|_{H^{1}(D_{s_{\hat{m}}})}\|u\|_{W^{k+1, \infty}(\Omega)}.
\end{split}
\end{equation}
By the definition of the partition of the exact solution in \eqref{equ:partdef}, we have
\begin{equation}\label{equ:uuuuu}
u_{l}(\mathbf{x})=0,\qquad \forall \mathbf{x}\in B_{E_0,r_{l-1}}.
\end{equation}
From \eqref{equ:uuuu} and  \eqref{equ:uuuuu},   it follows that
\begin{equation}\label{equ:newjiaadd1}
\begin{split}
	\|u_{l}-u_{l}^{h}\|_{H^{1}(B_{E_0,r_{l-2}})}
=&\|u_{l}^{h}\|_{H^{1}(B_{E_0,r_{l-2}})}
\leq \|u_{l}^{h}\|_{H^{1}(D_{s_{\hat{m}}})}\\
\lesssim &
h^{k+1}l\|u_{l}\|_{W^{k+1,\infty}(\Omega)}\lesssim h^{k+1}l_0\|u\|_{W^{k+1,\infty}(\Omega)}\\
\lesssim & h^{k+1}|\ln h|\|u\|_{W^{k+1,\infty}(\Omega)},
\end{split}
\end{equation}
which completes the proof. 
\end{proof}

\subsection{Local error estimates}  Using the results in previous subsections, we shall establish a local $H^1$ error estimate for virtual element method. 

\begin{lemma} \label{lem:lemmaadd1}
Let $1\leq \overline{l}\leq l_{0}$.
 Assume  that $E_0$ is  a given element in $\mathcal{T}_h$ and $B_{E_0,r}$  is defined as in \eqref{equ:eleset}.
 Under the same assumptions of Theorem \ref{thm:maxerror}, there holds
  \begin{align}\label{equ:lemmaadd1result2}
 \|u-u^{h}\|_{H^{1}(B_{E_0,r_{\overline{l}}})}\lesssim h^{k}r_{\overline{l}}|\ln h|^{2}\|u\|_{W^{k+1,\infty}(\Omega)}.
  \end{align}
\end{lemma}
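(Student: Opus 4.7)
The plan is to exploit the decomposition $u=\hat u+\sum_{l=0}^{l_0}u_l$ established in \eqref{equ:partition}, the corresponding decomposition $u^h=\hat u^h+\sum_{l=0}^{l_0}u_l^h$ of the virtual element solution defined in \eqref{equ:vem_sol_uhat} and \eqref{equ:vem_sol_ul}, and the identity $\hat u^h=\hat u$ from Lemma \ref{lem:equal}. These together give
\begin{equation*}
u-u^h=\sum_{l=0}^{l_0}(u_l-u_l^h),
\end{equation*}
so by the triangle inequality
\begin{equation*}
\|u-u^h\|_{H^1(B_{E_0,r_{\overline{l}}})}\le \sum_{l=0}^{l_0}\|u_l-u_l^h\|_{H^1(B_{E_0,r_{\overline{l}}})}.
\end{equation*}

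The strategy is to split this sum according to whether $B_{E_0,r_{\overline{l}}}$ is small or large compared with the support-related ball $B_{E_0,r_{l-2}}$ appearing in the local bound of Lemma \ref{lem:h1ball}. Concretely, I separate the indices into $l\le \overline{l}+1$ and $l\ge \overline{l}+2$. For the low-index piece $l\le \overline{l}+1$ I invoke the global estimate \eqref{equ:partapph1} of Lemma \ref{lem:partapp}:
\begin{equation*}
\|u_l-u_l^h\|_{H^1(B_{E_0,r_{\overline{l}}})}\le \|u_l-u_l^h\|_{H^1(\Omega)}\lesssim h^k r_l\|u\|_{W^{k+1,\infty}(\Omega)},
\end{equation*}
and since $r_l=2^l l h$ grows geometrically, $\sum_{l=0}^{\overline{l}+1}r_l\lesssim r_{\overline{l}+1}\lesssim r_{\overline{l}}$, producing a total contribution bounded by $h^k r_{\overline{l}}\|u\|_{W^{k+1,\infty}(\Omega)}$. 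For the high-index piece $l\ge \overline{l}+2$ (hence in particular $l\ge 3$), the containment $B_{E_0,r_{\overline{l}}}\subset B_{E_0,r_{l-2}}$ allows me to apply the sharper local estimate \eqref{equ:h1ball} of Lemma \ref{lem:h1ball}:
\begin{equation*}
\|u_l-u_l^h\|_{H^1(B_{E_0,r_{\overline{l}}})}\le \|u_l-u_l^h\|_{H^1(B_{E_0,r_{l-2}})}\lesssim h^{k+1}|\ln h|\,\|u\|_{W^{k+1,\infty}(\Omega)}.
\end{equation*}
Summing over at most $l_0\lesssim|\ln h|$ terms (recall \eqref{equ:l0estimate}) yields a contribution of order $h^{k+1}|\ln h|^2\|u\|_{W^{k+1,\infty}(\Omega)}$.

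Combining the two pieces and using that $\overline{l}\ge 1$ implies $r_{\overline{l}}\ge 2h$, so $h^{k+1}|\ln h|^2\lesssim h^k r_{\overline{l}}|\ln h|^2$, yields the desired bound
\begin{equation*}
\|u-u^h\|_{H^1(B_{E_0,r_{\overline{l}}})}\lesssim h^k r_{\overline{l}}|\ln h|^2\|u\|_{W^{k+1,\infty}(\Omega)}.
\end{equation*}
The main conceptual obstacle has in fact already been absorbed into Lemma \ref{lem:h1ball}, whose proof supplies the extra $h$-power needed to offset the logarithmic loss from summing $l_0\sim|\ln h|$ dyadic shells; the present step is essentially bookkeeping, with the only delicate point being the splitting index and the verification that $\sum_l r_l$ behaves like $r_{\overline{l}}$ by the geometric growth of $r_l$.
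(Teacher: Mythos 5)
Your proposal is correct and matches the paper's own argument essentially step for step: the same decomposition $u-u^h=\sum_l(u_l-u_l^h)$ via Lemma \ref{lem:equal}, the same split at $l\le\overline{l}+1$ versus $l\ge\overline{l}+2$, with Lemma \ref{lem:partapp} plus the geometric growth of $r_l$ handling the first part and Lemma \ref{lem:h1ball} together with $l_0\lesssim|\ln h|$ handling the second. No substantive differences to report.
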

\begin{proof}
Assume that $\hat{u}(\mathbf{x}),u_{0}(\mathbf{x})$ and $u_{l}^{h}(\mathbf{x})~(1\leq l\leq l_{0})$ are defined as in subsection \ref{ssec:cut}. 
By \eqref{equ:partition} and Lemma \ref{lem:equal}, we have
\begin{align}\label{equ:jian2}
(u-u^{h})(\mathbf{x})=\sum\limits_{l=0}^{\overline{l}+1}(u_{l}-u_{l}^{h})(\mathbf{x})+\sum\limits_{l=\overline{l}+2}^{l_{0}}(u_{l}-u_{l}^{h})(\mathbf{x}).
\end{align}
We first estimate $\left\|\sum\limits_{l=0}^{\overline{l}+1}(u_{l}-u_{l}^{h})\right\|_{H^{1}(B_{E_{0}, r_{\overline{l}}})}$.
By  \eqref{rldefine} and Lemma \ref{lem:partapp}, we have, if $0\leq l\leq \overline{l}+1$, then
\begin{align*}
\|u_{l}-u_{l}^{h}\|_{H^{1}(B_{E_{0},r_{\overline{l}}})}\leq \|u_{l}-u_{l}^{h}\|_{H^{1}(\Omega)}
\lesssim h^{k}r_{l}\|u\|_{W^{k+1,\infty}(\Omega)}
\lesssim 2^{l-\overline{l}}h^{k}r_{\overline{l}}\|u\|_{W^{k+1,\infty}(\Omega)},
\end{align*}
which implies
\begin{align}
\left\|\sum\limits_{l=0}^{\overline{l}+1}(u_{l}-u_{l}^{h})\right\|_{H^{1}(B_{E_{0},r_{\overline{l}}})}\lesssim
\sum\limits_{l=0}^{\overline{l}+1}2^{l-\overline{l}}h^{k}r_{\overline{l}}\|u\|_{W^{k+1,\infty}(\Omega)}
\lesssim
h^{k}r_{\overline{l}}\|u\|_{W^{k+1,\infty}(\Omega)}.
\label{equ:simple11}
\end{align}
Next we estimate $\left\|\sum\limits_{l=\overline{l}+2}^{l_{0}}(u_{l}-u_{l}^{h})\right\|_{H^{1}(B_{E_{0},r_{\overline{l}}})}$.  By  Lemma \ref{lem:h1ball}, we have 
\begin{equation}\label{equ:simple22}
	\begin{split}
		&\left\|\sum\limits_{l=\overline{l}+2}^{l_{0}}(u_{l}-u_{l}^{h})\right\|_{H^{1}(B_{E_{0},r_{\overline{l}}})}
\leq \sum\limits_{l=\overline{l}+2}^{l_{0}}\|u_{l}-u_{l}^{h}\|_{H^{1}(B_{E_{0},r_{\overline{l}}})}
\\
\leq &\sum\limits_{l=\overline{l}+2}^{l_{0}}\|u_{l}-u_{l}^{h}\|_{H^{1}(B_{E_0,r_{l-2}})}
\leq \sum\limits_{l=\overline{l}+2}^{l_{0}}ch^{k+1}|\ln h|\|u\|_{W^{k+1,\infty}(\Omega)}
\\\lesssim &h^{k}r_{\overline{l}}|\ln h|^{2}\|u\|_{W^{k+1,\infty}(\Omega)}.
	\end{split}
\end{equation}
Inserting \eqref{equ:simple11} and  \eqref{equ:simple22} into \eqref{equ:jian2}, we get the desired result \eqref{equ:lemmaadd1result2}.
\end{proof}

Now, we shall estimate the local error $\|u-u^{h}\|_{H^{1}(E_{0})}$  and $\|u-u^{h}\|_{L^{2}(E_{0})}$. 

\begin{lemma} \label{lem:localestimate}
  Assume that $E_0$ is  a given element in $\mathcal{T}_h$ and 
$u\in W^{k+1,\infty}(\Omega)$. Then
\begin{align}\label{equ:h1local}
  \|u-u^{h}\|_{H^{1}(E_0)}\lesssim h^{k+1}|\ln h|^{2}\|u\|_{W^{k+1,\infty}(\Omega)}.
  \end{align}
Furthermore, under the assumption that   $k\geq 2$, there holds
    \begin{align}\label{equ:l2local}
 \|u-u^{h}\|_{L^{2}(E_0)}\lesssim h^{k+2}|\ln h|^{4}(\|u\|_{W^{k+1,\infty}(\Omega)}+\|f\|_{H^{k}(\Omega)}).
  \end{align}
\end{lemma}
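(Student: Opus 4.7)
The plan is to derive \eqref{equ:h1local} as an immediate corollary of Lemma \ref{lem:lemmaadd1}, and to establish \eqref{equ:l2local} via a local duality argument centred at $E_0$, combined with a dyadic decomposition and the regularity estimate of Theorem \ref{thm:reg}. For the $H^1$ bound I would simply specialise Lemma \ref{lem:lemmaadd1} to $\overline{l} = 1$: since $r_1 = 2h$ and $E_0 \subset B_{E_0, r_1}$, the lemma yields
\[
\|u-u^h\|_{H^1(E_0)} \le \|u-u^h\|_{H^1(B_{E_0, r_1})} \lesssim h^{k} r_1 |\ln h|^2 \|u\|_{W^{k+1,\infty}(\Omega)},
\]
which is precisely \eqref{equ:h1local}.

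For \eqref{equ:l2local} I would pick an arbitrary $\nu \in L^2(\Omega)$ supported in $E_0$ with $\|\nu\|_{L^2(\Omega)} \le 1$ and let $\chi \in H^1_0(\Omega)$ solve $\L\chi = \nu$, so that by duality $\|u-u^h\|_{L^2(E_0)}$ equals the supremum of $(u-u^h,\nu) = a(u-u^h,\chi)$. With $\chi^h := I_h \chi$, I would split
\[
a(u-u^h,\chi) = a(u-u^h,\chi-\chi^h) + \bigl[a(u,\chi^h) - a_h(u^h,\chi^h)\bigr] + \bigl[a_h(u^h,\chi^h) - a(u^h,\chi^h)\bigr].
\]
The middle bracket collapses to $(f - \Pi_{k-1}^0 f,\, \chi^h - \Pi_{k-1}^0 \chi^h)$, of order $h^{k+2}$ under the hypothesis $f \in H^k(\Omega)$, and the last bracket is the VEM stabilisation-consistency gap, which the $k$-consistency of $a_h$ together with the standard approximation of $u^h - \Pi_k^\nabla u^h$ renders of the same order.

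The leading first term is the heart of the argument. Setting $A_l := B_{E_0, r_{l+1}} \setminus B_{E_0, r_l}$ for $2 \le l \le l_0 - 1$ and treating $B_{E_0, r_2}$ as the core, I would pair Lemma \ref{lem:lemmaadd1} (giving $\|u-u^h\|_{H^1(B_{E_0, r_2})} \lesssim h^{k+1}|\ln h|^2$) with the global $H^2$-estimate $\|\chi\|_{H^2(\Omega)} \lesssim 1$ and the interpolation bound $\|\chi - I_h \chi\|_{H^1(B_{E_0, r_2})} \lesssim h$ to bound the core contribution by $h^{k+2}|\ln h|^2$. On each annulus $A_l$, Theorem \ref{thm:reg} applied with $d=h$ and $\gamma \sim r_l/h$ gives $\|\chi\|_{H^3(A_l)} \lesssim r_l^{-1} |\ln h|$; since $k \ge 2$ the virtual element space reproduces quadratics, so standard interpolation yields $\|\chi - I_h \chi\|_{H^1(A_l)} \lesssim h^2 r_l^{-1} |\ln h|$. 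Pairing with $\|u-u^h\|_{H^1(B_{E_0, r_{l+1}})} \lesssim h^k r_{l+1}|\ln h|^2$ produces a per-annulus contribution of order $h^{k+2}|\ln h|^3 \|u\|_{W^{k+1,\infty}(\Omega)}$; summing over the $O(|\ln h|)$ rings gives the target $h^{k+2}|\ln h|^4$ bound.

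The main obstacle I anticipate is the careful bookkeeping of the two VEM-specific contributions---the stabilisation gap and the $\Pi_{k-1}^0$-projection of $f$---so that neither introduces an unwanted power of $h^{-1}$ or extra logarithmic factor beyond the budget. The restriction $k \ge 2$ enters precisely because Theorem \ref{thm:reg} delivers only $H^3$-regularity of the dual solution away from $E_0$; converting this regularity into the $O(h^2)$ annular approximation rate requires the local space to reproduce quadratics. The assumption $f \in H^k(\Omega)$ plays its role exclusively in controlling the right-hand side gap $(f - \Pi_{k-1}^0 f,\, \chi^h - \Pi_{k-1}^0 \chi^h)$, allowing it to be absorbed into the same rate.
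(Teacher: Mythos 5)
Your $H^1$ bound and the overall architecture of the $L^2$ argument (duality with a source supported near $E_0$, the three-way splitting into an interpolation term, a data-oscillation term $(f-\Pi_{k-1}^0f,\chi^h-\Pi_{k-1}^0\chi^h)$, and the $a_h$-versus-$a$ gap, plus the dyadic annuli $B_{E_0,r_{l+1}}\setminus B_{E_0,r_l}$ treated with Theorem \ref{thm:reg}) coincide with the paper's proof, which uses the test function $\theta_0(u-u^h)$ instead of a supremum over $\nu$ but is otherwise the same. Your treatment of the first term and of the data term is sound, including the role of $k\ge2$ and $f\in H^k(\Omega)$.

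The genuine gap is your last bracket, $a_h(u^h,\chi^h)-a(u^h,\chi^h)$, which you dismiss in one sentence via ``$k$-consistency together with the standard approximation of $u^h-\Pi_k^{\nabla}u^h$.'' That argument, read globally, fails by a factor of $h$: consistency lets you rewrite the gap as $\sum_{E}\bigl[a_h^E-a^E\bigr](u^h-\Pi_k^0u,\;\chi^h-\Pi_2^0\chi)$, and the best global bounds are $\|u^h-\Pi_k^0u\|_{H^1(\Omega)}\lesssim h^{k}$ and $\|\chi^h-\Pi_2^0\chi\|_{H^1(\Omega)}\lesssim h\|\chi\|_{H^2(\Omega)}$, so you only reach $h^{k+1}$, not the required $h^{k+2}$. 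This term needs exactly the same localization machinery as your ``leading'' term: split the sum over $B_{E_0,r_2}$ and the dyadic annuli, use Lemma \ref{lem:lemmaadd1} to get $\|u^h-\Pi_k^0u\|_{H^1}\lesssim h^{k}r_l|\ln h|^2$ on each ring, and pair it with the $O(h^2 r_l^{-1}|\ln h|)$ approximation of $\chi$ coming from Theorem \ref{thm:reg}. In the paper this is the term $I_3$, it occupies the bulk of the proof, and it is precisely where (together with the sum over the $O(|\ln h|)$ rings) the fourth logarithm in $|\ln h|^4$ is generated; without carrying out this localized estimate your proposal does not reach \eqref{equ:l2local}.
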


\begin{proof}
	Let $\overline{l}=1$. By \eqref{equ:lemmaadd1result2}, we have
	\begin{equation}
		\begin{split}
			 \|u-u^{h}\|_{H^{1}(E_{0})} &\leq
 \|u-u^{h}\|_{H^{1}(B_{E_0,r_{\overline{l}}})}\\
& \lesssim h^{k}r_{\overline{l}}|\ln h|^{2}\|u\|_{W^{k+1,\infty}(\Omega)}\\
 &\lesssim  h^{k+1}|\ln h|^{2}\|u\|_{W^{k+1,\infty}(\Omega)},
		\end{split}
	\end{equation}
which complete the proof of \eqref{equ:h1local}. 

 To show \eqref{equ:l2local}, let $\theta_{0}(\mathbf{x})$ be the cutoff function
satisfying  $\theta_{0}(\mathbf{x})=1$ if $\mathbf{x}\in E_0$, and
$\theta_{0}(\mathbf{x})=0$ if $\mathbf{x}\in \Omega\backslash B_{E_0, r_{1}}$.
Assume that  $\theta_{1}(\mathbf{x})$ satisfies the following problem
\begin{eqnarray}
\begin{cases}
-\nabla\cdot (\alpha\nabla \theta_{1}(\mathbf{x}))=\theta_{0}(\mathbf{x})(u-u^{h})(\mathbf{x}),\quad &\text{in}\quad \Omega,\\
  \theta_{1}(\mathbf{x})=0, &\text{on}\quad \partial\Omega.\label{equation3.1}
\end{cases}
\end{eqnarray}
 Assume also that $f^{h}=\Pi_{k-1}^{0}f$.
By the definition of the cutoff function $\theta_0(x)$, it is easy to deduce that 
\begin{equation}\label{equ:Identity}
	\begin{split}
		 \|u-u^{h}\|_{L^{2}(E_0)}^{2}\leq
 &\left(u-u^{h},\theta_{0}(u-u^{h})\right) \\
 =&(u-u^{h},-\nabla.(\alpha\nabla \theta_{1}))\\
=&a(u-u^{h},\theta_{1}-I_{h}\theta_{1})+a(u-u^{h},I_{h}\theta_{1})\\
=&a(u-u^{h},\theta_{1}-I_{h}\theta_{1})+(f,I_{h}\theta_{1})-a(u^{h},I_{h}\theta_{1})
\\=&a(u-u^{h},\theta_{1}-I_{h}\theta_{1})+(f-f^{h},I_{h}\theta_{1})
+[a_{h}(u^{h},I_{h}\theta_{1})-a(u^{h},I_{h}\theta_{1})]
\\:=&I_{1}+I_{2}+I_{3}.
	\end{split}
\end{equation}
We first concentrate on the estimate of $I_1$.  For such purpose, we make an observation that  we can split $I_1$ as
 \begin{equation}
 	\begin{split}
 		I_{1}&=a_{B_{E_0, r_{2}}}(u-u^{h},\theta_{1}-I_{h}\theta_{1})
+\sum\limits_{l=2}^{l_{0}}
a_{B_{E_0,r_{l+1}}\backslash B_{E_0, r_{l}}}(u-u^{h},\theta_{1}-I_{h}\theta_{1})
\\&:=\sum\limits_{l=1}^{l_{0}}I_{1,l}.
 	\end{split}
 \end{equation}
It is sufficient to estimate $I_{1,l}$ for $1\le l \le l_0$. 

Lemma \ref{lem:lemmaadd1} implies that 
\begin{equation}\label{equ:I1est}
	\begin{split}
		|I_{1,1}|&\lesssim \|u^{h}-u\|_{H^{1}(B_{E_0,r_{2}})}
\|\theta_{1}-I_{h}\theta_{1}\|_{H^{1}(B_{E_0,r_{2}})}
\\&
\lesssim h^{k}r_{2}|\ln h|^{2}\|u\|_{W^{k+1,\infty}(\Omega)}
h\|\theta_{1}\|_{H^{2}(\Omega)}
\\&
\lesssim h^{k+1}|\ln h|^{2}\|u\|_{W^{k+1,\infty}(\Omega)}
h\|\theta_{0}(u-u^{h})\|_{L^{2}(\Omega)}
\\&
\lesssim h^{k+2}|\ln h|^{2}\|u\|_{W^{k+1,\infty}(\Omega)}\|\theta_{0}(u-u^{h})\|_{L^{2}(\Omega)}.
	\end{split}
\end{equation}

To estimate  $I_{1,l}$ for $l\geq 2$, notice that 
$\theta_{0}(\mathbf{x})=0$ if $\mathbf{x}\in \Omega\backslash B_{E_0, r_{1}}$.
Using the  Theorem \ref{thm:reg}, we have the following regularity result for $\theta_1$
\begin{equation}\label{equ:theta1estimate}
	\|\theta_{1}\|_{H^{3}(\Omega\backslash B_{E_0, r_{l}})}\lesssim r_{l}^{-1}|\ln r_{l}|\|\theta_{0}(u-u^{h})\|_{L^{2}(\Omega)},
\end{equation}
for $l\ge 2$.  From  \eqref{equ:lemmaadd1result2} and  \eqref{equ:theta1estimate}, we can derivate that 

\begin{equation}\label{equ:Ilest}
	\begin{split}
		|I_{1,l}|&\lesssim
\|u^{h}-u\|_{H^{1}(B_{E_0,r_{l+1}}\backslash B_{E_0,r_{l}})}
\|\theta_{1}-I_{h}\theta_{1}\|_{H^{1}(B_{E_0, r_{l+1}}\backslash B_{E_0,r_{l}})}
\\&
\lesssim h^{k}r_{l+1}|\ln h|^{2}\|u\|_{W^{k+1,\infty}(\Omega)}
h^{2}\|\theta_{1}\|_{H^{3}(\Omega\backslash B_{E_0,r_{l}})}
\\&
\lesssim h^{k}r_{l}|\ln h|^{2}\|u\|_{W^{k+1,\infty}(\Omega)}
h^{2}r_{l}^{-1}|\ln r_{l}|\|\theta_{0}(u-u^{h})\|_{L^{2}(\Omega)}
\\&
\lesssim h^{k+2}|\ln h|^{3}\|u\|_{W^{k+1,\infty}(\Omega)}\|\theta_{0}(u-u^{h})\|_{L^{2}(\Omega)}.
	\end{split}
\end{equation}
Combining the estimates of \eqref{equ:I1est} and \eqref{equ:Ilest},  we get
\begin{equation}
|I_{1}|\lesssim h^{k+2}|\ln h|^{4}\|u\|_{W^{k+1,\infty}(\Omega)}\|\theta_{0}(u-u^{h})\|_{L^{2}(\Omega)}.\label{I1estimate}
\end{equation}

Now, we turn to estimate $I_2$. By the definition of the projection operator $\Pi_{k-1}^0$,   there holds
\begin{align*}
(f-f^{h},q)_{K}=
0,\qquad\forall q\in \mathbb{P}_{k-1}(K)
\end{align*}
for any $K\in
\mathcal{T}_{h}$. We can deduce that 
\begin{equation}\label{equ:I2split}
	\begin{split}
		I_{2}&=(f-f^{h},I_{h}\theta_{1})=(f-\Pi_{k-1}^{0}f,I_{h}\theta_{1}-\Pi_{k-1}^{0}\theta_{1})\\&
 =(f-\Pi_{k-1}^{0}f,I_{h}\theta_{1}-\theta_{1})
 +(f-\Pi_{k-1}^{0}f,\theta_{1}-\Pi_{k-1}^{0}\theta_{1})\\&
 :=I_{2,1}+I_{2,2}
	\end{split}
\end{equation}
We only  need to estimate $I_{2,1}$ and $I_{2,2}$.  We start with the estimation of $I_{2,1}$. 
For $I_{2,1}$, we have 
\begin{align*}
|I_{2,1}|&\lesssim \|f-\Pi_{k-1}^{0}f\|_{L^{2}(\Omega)}\|I_{h}\theta_{1}-\theta_{1}\|_{L^{2}(\Omega)}
\lesssim
h^{k}\|f\|_{H^{k}(\Omega)}h^{2}\|\theta_{1}\|_{H^{2}(\Omega)}
\\&
\lesssim h^{k+2}\|f\|_{H^{k}(\Omega)}\|\theta_{0}(u-u^{h})\|_{L^{2}(\Omega)}. 
\end{align*}
Similarly, for $I_{2,2}$, we have 
\begin{equation}
	\begin{split}
		|I_{2,2}|&\lesssim \|f-\Pi_{k-1}^{0}f\|_{L^{2}(\Omega)}\|\Pi_{k-1}^{0}\theta_{1}-\theta_{1}\|_{L^{2}(\Omega)}
\lesssim
h^{k}\|f\|_{H^{k}(\Omega)}h^{2}\|\theta_{1}\|_{H^{2}(\Omega)}
\\&
\lesssim h^{k+2}\|f\|_{H^{k}(\Omega)}\|\theta_{0}(u-u^{h})\|_{L^{2}(\Omega)}.
	\end{split}
\end{equation}
Plugging the estimates of $I_{2,1}$ and $I_{2,2}$ into \eqref{equ:I2split}, we have
\begin{align}
|I_{2}|\lesssim h^{k+2}\|f\|_{H^{k}(\Omega)}\|\theta_{0}(u-u^{h})\|_{L^{2}(\Omega)}.\label{I2estimate}
\end{align}

Then, we move to consider the estimation of $I_3$. We proceed as 
\begin{equation}\label{equ:I3_def}
	\begin{split}
		I_{3} =&a_{h}(u^{h},I_{h}\theta_1)-a(u^{h},I_{h}\theta_1)=
		\sum\limits_{E\in\mathcal{T}_h}[a_{h}^{E}(u^{h},I_{h}\theta_1)-a^{E}(u^{h},I_{h}\theta_1)]\\
=&\sum\limits_{E\in\mathcal{T}_h}[a_{h}^{E}(u^{h}-\Pi_{k}^{0}u,I_{h}\theta_1)-a^{K}(u^{h}-\Pi_{k}^{0}u,I_{h}\theta_1)]
\\
=&\sum\limits_{E\in\mathcal{T}_h}[a_{h}^{E}(u^{h}-\Pi_{k}^{0}u,I_{h}\theta_1-\Pi_{2}^{0}\theta_1)
-a^{E}(u^{h}-\Pi_{k}^{0}u,I_{h}\theta_1-\Pi_{2}^{0}\theta_1)]
\\
=&\sum\limits_{E\subset B_{E_0,r_{2}}}[a_{h}^{E}(u^{h}-\Pi_{k}^{0}u,I_{h}\theta_1-\Pi_{2}^{0}\theta_1)-a^{E}(u^{h}-\Pi_{k}^{0}u,I_{h}\theta_1-\Pi_{2}^{0}\theta_1)]
+\\&
\sum\limits_{l=2}^{l_{0}}\sum\limits_{E\subset B_{E_0, r_{l+1}}\backslash B_{E_0, r_{l}}}[a_{h}^{E}(u^{h}-\Pi_{k}^{0}u,I_{h}\theta_1-\Pi_{2}^{0}\theta_1)-a^{E}(u^{h}-\Pi_{k}^{0}u,I_{h}\theta_1-\Pi_{2}^{0}\theta_1)]
\\:=&\sum\limits_{l=1}^{l_{0}}I_{3,l}.
	\end{split}
\end{equation}
 We first estimate $I_{3,1}$. Lemma \ref{lem:lemmaadd1} implies
\begin{equation}
	\begin{split}
		\|u^{h}-\Pi_{k}^{0}u\|_{H^{1}(B_{E_0,r_{2}})}
&\leq \|u^{h}-u\|_{H^{1}(B_{E_0,r_{2}})}+\|u-\Pi_{k}^{0}u\|_{H^{1}(B_{E_0,r_{2}})}
\\&
\lesssim h^{k}r_{2}|\ln h|^{2}\|u\|_{W^{k+1,\infty}(\Omega)}+h^{k}r_{2}\|u\|_{W^{k+1,\infty}(\Omega)}
\\&\lesssim h^{k+1}|\ln h|^{2}\|u\|_{W^{k+1,\infty}(\Omega)},
	\end{split}
\end{equation}
and
\begin{equation}
	\begin{split}
		\|I_{h}\theta_1-\Pi_{2}^{0}\theta_1\|_{H^{1}(B_{E_0,r_{2}})}
&\leq \|I_{h}\theta_1-\theta_1\|_{H^{1}(B_{E_0,r_{2}})}+\|\theta_1-\Pi_{2}^{0}\theta_1\|_{H^{1}(B_{E_0,r_{2}})}
\\&
\lesssim h\|\theta_1\|_{H^{2}(\Omega)}+h\|\theta_1\|_{H^{2}(\Omega)}
\lesssim h\|\theta_1\|_{H^{2}(\Omega)}\\&\leq h\|\theta_{0}(u-u^{h})\|_{L^{2}(\Omega)}.
	\end{split}
\end{equation}
Combining those two  estimates, we have
\begin{equation}
	\begin{split}
		&\left|\sum\limits_{E\subset B_{E_0,r_{2}}}a^{E}(u^{h}-\Pi_{k}^{0}u,I_{h}\theta_1-\Pi_{2}^{0}\theta_1)\right|\\
\lesssim &\|u^{h}-\Pi_{k}^{0}u\|_{H^{1}(B_{E_0,r_{2}})}\|I_{h}\theta_1-\Pi_{2}^{0}\theta_1\|_{H^{1}(B_{E_0,r_{2}})}
\\
\lesssim &h^{k+1}|\ln h|^{2}\|u\|_{W^{k+1,\infty}(\Omega)}h\|\theta_1\|_{H^{2}(\Omega)}
\\
\lesssim & h^{k+2}|\ln h|^{2}\|u\|_{W^{k+1,\infty}(\Omega)}\|\theta_{0}(u-u^{h})\|_{L^{2}(\Omega)}.
	\end{split}
\end{equation}
Similarly, we have
\begin{align*}
&\left|\sum\limits_{E\subset B_{E_0,r_{2}}}a_{h}^{E}(u^{h}-\Pi_{k}^{0}u,I_{h}\theta_1-\Pi_{2}^{0}\theta_1)\right|\\
\lesssim & h^{k+2}|\ln h|^{2}\|u\|_{W^{k+1,\infty}(\Omega)}\|\theta_{0}(u-u^{h})\|_{L^{2}(\Omega)}.
\end{align*}
Using those two estimates, we can deduce that 
\begin{equation}
|I_{3,1}|\lesssim h^{k+2}|\ln h|^{2}\|u\|_{W^{k+1,\infty}(\Omega)}\|\theta_{0}(u-u^{h})\|_{L^{2}(\Omega)}.\label{I31est}
\end{equation}
 We are moving to  estimate $I_{3,l}$ for $2\leq l\leq l_{0}$. Using the regularity\eqref{equ:theta1estimate}, we have
\begin{equation}
	\begin{split}
		\|I_{h}\theta_1-\Pi_{2}^{0}\theta_1\|_{H^{1}(B_{E_0,r_{l+1}}\backslash B_{E_0,r_{l}})}
&\lesssim h^{2}\|\theta_1\|_{H^{3}(B_{E_0,r_{l+1}}\backslash B_{E_0,r_{l}})}
\\&
\lesssim h^{2}r_{l}^{-1}|\ln h|\|\theta_{0}(u-u^{h})\|_{L^{2}(\Omega)},
	\end{split}
\end{equation}
and
\begin{equation}
	\begin{split}
	\|I_{h}\theta_1-\theta_1\|_{H^{1}(B_{E_0,r_{l+1}}\backslash B_{E_0,r_{l}})}
\lesssim h^{2}r_{l}^{-1}|\ln h|\|\theta_{0}(u-u^{h})\|_{L^{2}(\Omega)}.	
	\end{split}
\end{equation}
Also,  \eqref{equ:lemmaadd1result2} implies
\begin{equation}
	\begin{split}
		&\|u^{h}-\Pi_{k}^{0}u\|_{H^{1}(B_{E_0, r_{l+1}}\backslash B_{E_0, r_{l}})}
\\
\lesssim & \|u^{h}-u\|_{H^{1}(B_{E_0,r_{l+1}}\backslash B_{E_0,r_{l}})}
+\|u-\Pi_{k}^{0}u\|_{H^{1}(B_{E_0,r_{l+1}}\backslash B_{E_0,r_{l}})}
\\
\lesssim & h^{k}r_{l}|\ln h|^{2}\|u\|_{W^{k+1,\infty}(\Omega)}
+h^{k}r_{l}\|u\|_{W^{k+1,\infty}(\Omega)}
\\
\lesssim &h^{k}r_{l}|\ln h|^{2}\|u\|_{W^{k+1,\infty}(\Omega)}.
	\end{split}
\end{equation}

Using  the above three estimates and the definition of $I_3$, for  $2\leq l\leq l_{0}$, we can show 
\begin{equation}\label{equ:over1}
	\begin{split}
		&\left|\sum\limits_{E\subset B_{E_0,r_{l+1}}\backslash B_{E_0,r_{l}}}a^{E}(u^{h}-\Pi_{k}^{0}u,I_{h}\theta_1-\Pi_{2}^{0}\theta_1)\right|
\\
\lesssim &\|u^{h}-\Pi_{k}^{0}u\|_{H^{1}(B_{E_0, r_{l+1}}\backslash B_{E_0,r_{l}})}
\|I_{h}\theta_1-\Pi_{2}^{0}\theta_1\|_{H^{1}(B_{E_0, r_{l+1}}\backslash B_{E_0,r_{l}})}
\\
\lesssim &\|u^{h}-\Pi_{k}^{0}u\|_{H^{1}(B_{E_0, r_{l+1}}\backslash B_{E_0,r_{l}})}
\|\theta_1-\Pi_{2}^{0}\theta_1\|_{H^{1}(B_{E_0, r_{l+1}}\backslash B_{E_0,r_{l}})}
+ \\
&\|u^{h}-\Pi_{k}^{0}u\|_{H^{1}(B_{E_0, r_{l+1}}\backslash B_{E_0,r_{l}})}
\|I_{h}\theta_1-\theta_1\|_{H^{1}(B_{E_0, r_{l+1}}\backslash B_{E_0,r_{l}})}
\\
\lesssim & h^{k}r_{l}|\ln h|^{2}\|u\|_{W^{k+1,\infty}(\Omega)}
h^{2}r_{l}^{-1}|\ln h|\|\theta_{0}(u-u^{h})\|_{L^{2}(\Omega)} +\\
&h^{k}r_{l}|\ln h|^{2}\|u\|_{W^{k+1,\infty}(\Omega)}
h^{2}r_{l}^{-1}|\ln h|\|\theta_{0}(u-u^{h})\|_{L^{2}(\Omega)}
\\
\lesssim &h^{k+2}|\ln h|^{3}\|u\|_{W^{k+1,\infty}(\Omega)}
\|\theta_{0}(u-u^{h})\|_{L^{2}(\Omega)}.
	\end{split}
\end{equation}
Similarly, we have
\begin{equation}\label{equ:over2}
	\begin{split}
		&\left|\sum\limits_{E\subset B_{E_0,r_{l+1}}\backslash B_{E_0,r_{l}}}a_{h}^{E}(u^{h}-\Pi_{k}^{0}u,I_{h}\theta_1-\Pi_{2}^{0}\theta_1)\right|
\\&\lesssim h^{k+2}|\ln h|^{3}\|u\|_{W^{k+1,\infty}(\Omega)}
\|\theta_{0}(u-u^{h})\|_{L^{2}(\Omega)}.
	\end{split}
\end{equation}
Substituting \eqref{equ:over1} and \eqref{equ:over2} into \eqref{equ:I3_def}, we have
\begin{align}
|I_{3,l}|\lesssim  h^{k+2}|\ln h|^{3}\|u\|_{W^{k+1,\infty}(\Omega)}
\|\theta_{0}(u-u^{h})\|_{L^{2}(\Omega)}.\label{I3lestimate}
\end{align}
Combining \eqref{I31est} and \eqref{I3lestimate} gives
\begin{align}
|I_{3}|\lesssim h^{k+2}|\ln h|^{4}\|u\|_{W^{k+1,\infty}(\Omega)}\|\theta_{0}(u-u^{h})\|_{L^{2}(\Omega)}.\label{I3est}
\end{align}
By \eqref{equ:Identity}, \eqref{I1estimate}, \eqref{I2estimate} and  \eqref{I3est}, we have
\begin{align*}
 \|\theta_{0}(u-u^{h})\|_{L^{2}(\Omega)}^{2}\lesssim
 h^{k+2}|\ln h|^{4}\left(\|u\|_{W^{k+1,\infty}(\Omega)}+\|f\|_{H^{k}(\Omega)}\right)
 \|\theta_{0}(u-u^{h})\|_{L^{2}(\Omega)}.
\end{align*}
This implies
\begin{align*}
\|u-u^{h}\|_{L^{2}(E_0)}\leq
\|\theta_{0}(u-u^{h})\|_{L^{2}(\Omega)}
\lesssim h^{k+2}|\ln h|^{4}(\|u\|_{W^{k+1,\infty}(\Omega)}+\|f\|_{H^{k}(\Omega)}).
\end{align*}
This concludes the proof of  the local $L^2$ error estimate. 
\end{proof}

\subsection{An inverse estimate for VEM functions} It is well known that the classical 
polynomial inverse estimates \cite{BS2008,Ci2002} are no longer valid for VEM functions. In this subsection, we establish an inverse estimate using the maximum principle of harmonic function \cite{BS2018}.

\begin{lemma} \label{lem:inverse}
Assume that $v\in V_h$.  Under the same assumptions of Theorem \ref{thm:maxerror}, there holds
    \begin{align}\label{equ:inverse}
 \|v\|_{L^{\infty}(E_0)}\lesssim h^{-1}\|v\|_{L^{2}(E_0)}
  \end{align}
\end{lemma}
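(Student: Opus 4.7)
The plan is to invoke a weak maximum principle for $H^1$ functions directly on $v$, thereby sidestepping the usual polynomial inverse estimate that is not available for VEM functions. Since $v\in V_h$ implies $\Delta v|_{E_0}\in\mathbb{P}_{k-2}(E_0)$, I view $v$ on $E_0$ as the $H^1$ solution of the Dirichlet problem $-\Delta v = -\Delta v$ on $E_0$ with boundary data $v|_{\partial E_0}$. Applying the weak maximum principle of BGS2017 on the reference-scaled element and tracking the dependence on $h_{E_0}\sim h$ should yield a bound of the form
\begin{equation*}
\|v\|_{L^{\infty}(E_0)} \lesssim \|v\|_{L^{\infty}(\partial E_0)} + h^{2}\|\Delta v\|_{L^{\infty}(E_0)}.
\end{equation*}

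Each term on the right can then be handled by a classical polynomial inverse inequality. For the boundary term, the VEM construction guarantees $v|_e\in\mathbb{P}_k(e)$ on every edge $e\subset\partial E_0$, so the one-dimensional polynomial inverse estimate, combined with the mesh regularity $|e|\ge\rho h_E$, gives $\|v\|_{L^{\infty}(\partial E_0)}\lesssim h^{-1/2}\|v\|_{L^{2}(\partial E_0)}$. For the volumetric term, $\Delta v$ is itself a polynomial of degree at most $k-2$ on $E_0$, so the polynomial inverse estimate on polygonal domains from DPDR2020 yields $\|\Delta v\|_{L^{\infty}(E_0)}\lesssim h^{-1}\|\Delta v\|_{L^{2}(E_0)}$. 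Inserting these into the previous display gives
\begin{equation*}
\|v\|_{L^{\infty}(E_0)} \lesssim h^{-1/2}\|v\|_{L^{2}(\partial E_0)} + h\|\Delta v\|_{L^{2}(E_0)}.
\end{equation*}

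The main obstacle is then to reduce both right-hand-side terms to $h^{-1}\|v\|_{L^{2}(E_0)}$, which is the genuinely VEM-specific portion of the argument. For the boundary contribution I plan to use a VEM trace inequality of the form $\|v\|_{L^{2}(\partial E_0)}^{2}\lesssim h^{-1}\|v\|_{L^{2}(E_0)}^{2} + h\|\nabla v\|_{L^{2}(E_0)}^{2}$ together with a VEM $H^{1}$-to-$L^{2}$ inverse estimate $\|\nabla v\|_{L^{2}(E_0)}\lesssim h^{-1}\|v\|_{L^{2}(E_0)}$, both standard in the enhanced virtual element framework used in BGS2017. For the source term $\|\Delta v\|_{L^{2}(E_0)}$, I will integrate by parts on $E_0$ and exploit the fact that $\Delta v\in\mathbb{P}_{k-2}(E_0)$ so that polynomial inverse estimates apply to $\nabla\Delta v$; this reduces the estimate to the gradient and boundary quantities already controlled, closing the argument and producing \eqref{equ:inverse}.
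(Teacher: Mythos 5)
Your proposal follows the same skeleton as the paper's proof: a maximum-principle bound on $E_0$, the one-dimensional polynomial inverse estimate for the piecewise-polynomial trace $v|_{\partial E_0}$, the scaled trace inequality, and a VEM-specific $H^1$-to-$L^2$ inverse estimate. The difference is the first ingredient: the paper invokes the Brenner--Sung maximum principle \cite{BS2018} in the form $\|v\|_{L^{\infty}(E_0)}\lesssim \|v\|_{L^{\infty}(\partial E_0)}+\|v\|_{H^{1}(E_0)}$ (and note it is \cite{BS2018}, not \cite{BGS2017}, that supplies this; \cite{BGS2017} is used for the trace inequality), so no term in $\Delta v$ ever appears and the proof closes immediately with the inverse estimate $\|\nabla v\|_{L^{2}(E_0)}\lesssim h^{-1}\|v\|_{L^{2}(E_0)}$ of \cite{ChHu2018}. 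You instead use a barrier-type comparison bound $\|v\|_{L^{\infty}(E_0)}\lesssim \|v\|_{L^{\infty}(\partial E_0)}+h^{2}\|\Delta v\|_{L^{\infty}(E_0)}$, which is legitimate for $v\in V_h$ (weak comparison principle; $\Delta v$ is a polynomial, hence bounded), and you then handle the boundary term exactly as the paper does. The price of your variant is the extra volumetric term, which the paper's route never has to face.

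That extra term is where your argument has a genuine gap as written. You need $h\|\Delta v\|_{L^{2}(E_0)}\lesssim h^{-1}\|v\|_{L^{2}(E_0)}$, and your plan is to integrate by parts; but testing $\Delta v$ against itself gives
\begin{equation*}
\|\Delta v\|_{L^{2}(E_0)}^{2}=-\int_{E_0}\nabla(\Delta v)\cdot\nabla v\,d\mathbf{x}+\int_{\partial E_0}\Delta v\,\partial_n v\,ds,
\end{equation*}
and the boundary term involves the Neumann trace $\partial_n v$, which is \emph{not} among the quantities you control (only the Dirichlet trace of $v$ is a known piecewise polynomial); bounding it in $H^{-1/2}(\partial E_0)$ reintroduces $\|\Delta v\|_{L^{2}(E_0)}$ on the right with an $O(1)$ constant that cannot be absorbed. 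The standard repair is to test with $b_{E_0}\Delta v$, where $b_{E_0}$ is an interior bubble function vanishing on $\partial E_0$: the boundary term disappears, norm equivalence of $\|\cdot\|_{L^2}$ and $\|b_{E_0}^{1/2}\cdot\|_{L^2}$ on $\mathbb{P}_{k-2}(E_0)$ plus the polynomial inverse estimate give $\|\Delta v\|_{L^{2}(E_0)}\lesssim h^{-1}\|\nabla v\|_{L^{2}(E_0)}$; alternatively, simply cite the inverse inequalities of \cite{ChHu2018}, which cover this bound. With that fix, $h\|\Delta v\|_{L^{2}(E_0)}\lesssim \|\nabla v\|_{L^{2}(E_0)}\lesssim h^{-1}\|v\|_{L^{2}(E_0)}$ and your argument yields \eqref{equ:inverse}, at the cost of one more VEM-specific estimate than the paper's proof requires.
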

\begin{proof}
  Brenner and Sung (See \cite[Lemma 3.3]{BS2018}) showed 
  the following maximum principle
\begin{align}
 \|v\|_{L^{\infty}(E_{0})}\lesssim (\|v\|_{L^{\infty}(\partial E_{0})}+\|v\|_{H^{1}(E_{0})}),\qquad  \forall v\in H^{1}(\Omega).\label{0000ppp}
  \end{align}
  The key observation is that $v|_{\partial E_0}$ is a polynomial and the standard polynomial inverse estimates \cite{BS2008, Ci2002} are applicable, which implies 
  \begin{align*}
  \|v\|_{L^{\infty}(\partial E_0)}\lesssim h_{E_0}^{-\frac{1}{2}}\|v\|_{L^{2}(\partial E_0)}.
  \end{align*}
  Using the scaled  trace inequality for $H^1$ functions\cite{BGS2017}, we obtain
  \begin{align*}
   \|v\|_{L^{2}(\partial E_0)}\lesssim h_{E_0}^{-\frac{1}{2}}\|v\|_{L^{2}(E_0)}+h_{E_0}^{\frac{1}{2}}\|\nabla v\|_{L^{2}(E_0)}.
  \end{align*}
Combining the above two estimates, we have
  \begin{align*}
  \|v\|_{L^{\infty}(\partial E_0)}\lesssim h_{E_0}^{-1}\|v\|_{L^{2}(E_0)}+\|\nabla v\|_{L^{2}(E_0)}^.
  \end{align*}
Also,  the inverse estimate in \cite[Theorem 3.6]{ChHu2018} implies
  \begin{align}
   \|\nabla v\|_{L^{2}(E_0)}\lesssim h_{E_0}^{-1}\|v\|_{L^{2}(E_0)}.\label{0000000}
  \end{align}
   Using the above two estimates, it is relatively easy to deduce \eqref{equ:inverse}. 
\end{proof}

\subsection{Proof of the main result} With the above preparation, we are now in a perfect position to present the proof of our main numerical results.   Before we start,  we  recall the approximation property of the $L^2$-projection operator $\Pi^0_k$ as defined in \eqref{equ:l2proj}. For $\Pi^0_k$, the following approximation result holds\cite[Theorem 1.45]{DPDR2020}
\begin{equation}\label{equ:l2approxprop}
	|v-\Pi^0_kv |_{W^{m,p}(E_0)} \le h^{s-m}|v|_{W^{s,m}(E_0)},
\end{equation}
for any $v\in W^{s,p}(E_0)$,  $m\in \{0, 1, \ldots, s\}$, and  $p\in [1, \infty]$

We begin with the maximum error estimate of the difference between the gradient of  exact solution and the gradient of the  projection of the virtual element method solution. 
For such purpose, let $E_0 \in \mathcal{T}_h$ such that 
\begin{equation}
	\|\nabla u - \nabla  \Pi_k^{\nabla} u^h\|_{L^{\infty}(\Omega)} = \|\nabla u - \nabla  \Pi_k^{\nabla} u^h\|_{L^{\infty}(E_0)}.
\end{equation}
The key fact is that $\nabla  \Pi_k^{0} u - \nabla  \Pi_k^{\nabla} u^h$  and $\nabla  \Pi_k^{\nabla} u - \nabla  \Pi_k^{\nabla} u^h$ are polynomials so the inverse error estimate \cite[Corollary 1.29]{DPDR2020} for polynomials on general polygonal meshes can be applied. In addition, for the projection operator $ \Pi_k^{\nabla}$, we have the following boundedness property \cite[(2.36)]{BGS2017} 
\begin{equation}\label{equ:projbnd}
	 \|\nabla\Pi_k^{\nabla} v\|_{L^2(E_0)} \lesssim  \|\nabla v\|_{L^2(E_0)}, \quad \forall v\in H^1(E_0).
\end{equation}
 Then, \eqref{equ:h1local}, \eqref{equ:l2approxprop}, and  \eqref{equ:projbnd} imply 
\begin{equation}
	\begin{split}
		&\|\nabla u - \nabla \Pi_k^{\nabla} u^h\|_{L^{\infty}(\Omega)} \\
		=& \|\nabla u - \nabla  \Pi_k^{\nabla}u^h\|_{L^{\infty}(E_0)}\\
		\le &\|\nabla u - \nabla  \Pi^0_k u\|_{L^{\infty}(E_0)} + \|\nabla \Pi^0_ku - \nabla  \Pi_k^{\nabla} u\|_{L^{\infty}(E_0)} +  \\
		& \|\nabla  \Pi_k^{\nabla} u - \nabla  \Pi_k^{\nabla} u^h\|_{L^{\infty}(E_0)}\\
		\lesssim &\|\nabla u - \nabla  \Pi^0_k u\|_{L^{\infty}(E_0)}+
		h^{-1}\|\nabla \Pi^0_k u - \nabla  \Pi_k^{\nabla} u\|_{L^{2}(E_0)} +\\
		& h^{-1}\|\nabla  \Pi_k^{\nabla} u - \nabla  \Pi_k^{\nabla}u^h\|_{L^2(E_0)}\\
		\lesssim &\|\nabla u - \nabla  \Pi^0_k u\|_{L^{\infty}(E_0)}+
		h^{-1}\|\nabla u - \nabla  \Pi_k^{0} u\|_{L^{2}(E_0)} +\\
		& h^{-1}\|\nabla u - \nabla  \Pi_k^{\nabla} u\|_{L^{2}(E_0)}  + h^{-1}\|\nabla  \Pi_k^{\nabla} u - \nabla  \Pi_k^{\nabla}u^h\|_{L^2(E_0)}\\
		\lesssim &h^{k}{\|u\|_{W^{k+1,\infty}(\Omega)}}+ h^{k-1}{\|u\|_{H^{k+1}(E_0)}} +  h^{-1}\|\nabla  u - \nabla  u^h\|_{L^2(E_0)}\\
		\lesssim &h^{k}{\|u\|_{W^{k+1,\infty}(\Omega)}}+ h^{k}{\|u\|_{W^{k+1, \infty}(E_0)}} +  h^{-1}\|\nabla  u - \nabla  u^h\|_{L^2(E_0)}\\
		\lesssim& h^{k}{|\ln h|\|u\|_{W^{k+1,\infty}(\Omega)}}+ h^{-1}h^{k+1}|\ln h|^{2}\|u\|_{W^{k+1,\infty}(\Omega)}\\
		\lesssim & h^{k}|\ln h|^{2}\|u\|_{W^{k+1,\infty}(\Omega)},
	\end{split}
\end{equation}
where we have used the $L^{\infty}$ to $L^2$ inverse estimate for polynomial on polygons  in obtaining  the second inequality.  This completes the proof of \eqref{equ:h1max}.

Then, we are proving the $L^{\infty}$ error estimate. Assume $k\ge 2$. In that case, we can analogously let  $E_0 \in \mathcal{T}_h$ satisfy 
\begin{equation}
	\|u -  u^h\|_{L^{\infty}(\Omega)} = \| u -  u^h\|_{L^{\infty}(E_0)}.
\end{equation}
Then, Lemma \ref{lem:localestimate}, Lemma \ref{lem:inverse},  and the approximation property \eqref{equ:l2approxprop} give that 
\begin{equation}
	\begin{split}
		&\|u-u^{h}\|_{L^{\infty}(\Omega)}\\
		=&\|u-u^{h}\|_{L^{\infty}(E_{0})}\\
		\leq &\|u-\Pi_{k}^{0}u\|_{L^{\infty}(E_{0})}
+\|\Pi_{k}^{0}u-u^{h}\|_{L^{\infty}(E_{0})}
\\
\lesssim & h^{k+1}\|u\|_{W^{k+1,\infty}(\Omega)}
+h^{-1}\|\Pi_{k}^{0}u-u^{h}\|_{L^{2}(E_{0})}
\\
\lesssim &h^{k+1}\|u\|_{W^{k+1,\infty}(\Omega)}+
h^{-1}\left[\|\Pi_{k}^{0}u-u\|_{L^{2}(E_{0})}+\|u-u^{h}\|_{L^{2}(E_{0})}\right]
\\
\lesssim & h^{k+1}\|u\|_{W^{k+1,\infty}(\Omega)}
\\
&+h^{-1}\left[h^{k+2}\|u\|_{W^{k+1,\infty}(E_{0})}
+h^{k+2}|\ln h|^{4}(\|u\|_{W^{k+1,\infty}(\Omega)}+\|f\|_{H^{k}(\Omega)})\right]
\\
\lesssim & h^{k+1}|\ln h|^{4}(\|u\|_{W^{k+1,\infty}(\Omega)}+\|f\|_{H^{k}(\Omega)}).
	\end{split}
\end{equation}
It concludes the proof of \eqref{equ:l2max}.

\section{Numerical Examples}
In this section, we present a numerical example to validate our theoretical results.  In all the following numerical, the stabilizing bilinear form $S^E(\cdot, \cdot)$ is chosen as  \cite{BBCMMR2013}
\begin{equation}
	S^E(\phi_i-\Pi_k^{\nabla} \phi_i, \phi_j-\Pi_k^{\nabla} \phi_j)
	= \sum_{r=1}^{N_E} \chi_r(\phi_i-\Pi_k^{\nabla} \phi_i)\chi_r(\phi_j-\Pi_k^{\nabla} \phi_j),
\end{equation}
where $\chi_1, \cdots, \chi_{N_E}$ are the basis functions of the dual space of $W_k(E)$.

The optimal convergence of the maximal norm errors  shall be measured using discrete maximal norm error at vertices of mesh. Let $\mathcal{N}_h$ denote the set of all vertices of $\mathcal{T}_h$. We shall consider the discrete $L^{\infty}$ norm of $u-u^h$ as $\|u-u^h\|_{L^{\infty}} = \max_{\mathbf{p}\in \mathcal{N}_h}|u(\mathbf{p})-u^h(\mathbf{p})|$.  
In the virtual element method,  the piecewise  gradient of virtual element solution $u^h$ at vertices is not directly available. In practice, we use the piecewise  gradient of the projection $\Pi_k^{\nabla}u^h$ to approximate $\nabla u^h$. We take simple averaging to obtain the gradient of  $\Pi_k^{\nabla}u^h$ at a mesh vertex $\mathbf{p}\in\mathcal{N}_h$, which is denoted by $\overline{\nabla} \Pi_k^{\nabla}u^h(\mathbf{p})$.  The discrete $W^{1, \infty}$ norm of $u-u^h$ is defined as $\|u-u^h\|_{W^{1,\infty}} = \max_{p\in \mathcal{N}_h}|\nabla u(\mathbf{p})-\overline{\nabla} \Pi_k^{\nabla}u^h(\mathbf{p})|$. Let $N$ be the number of vertices of $\mathcal{T}_h$.

\begin{figure}[!h]
   \centering
  \subcaptionbox{$\mathcal{T}_{h,1}$\label{fig:hexagonal_mesh}}
   {\includegraphics[width=0.32\textwidth]{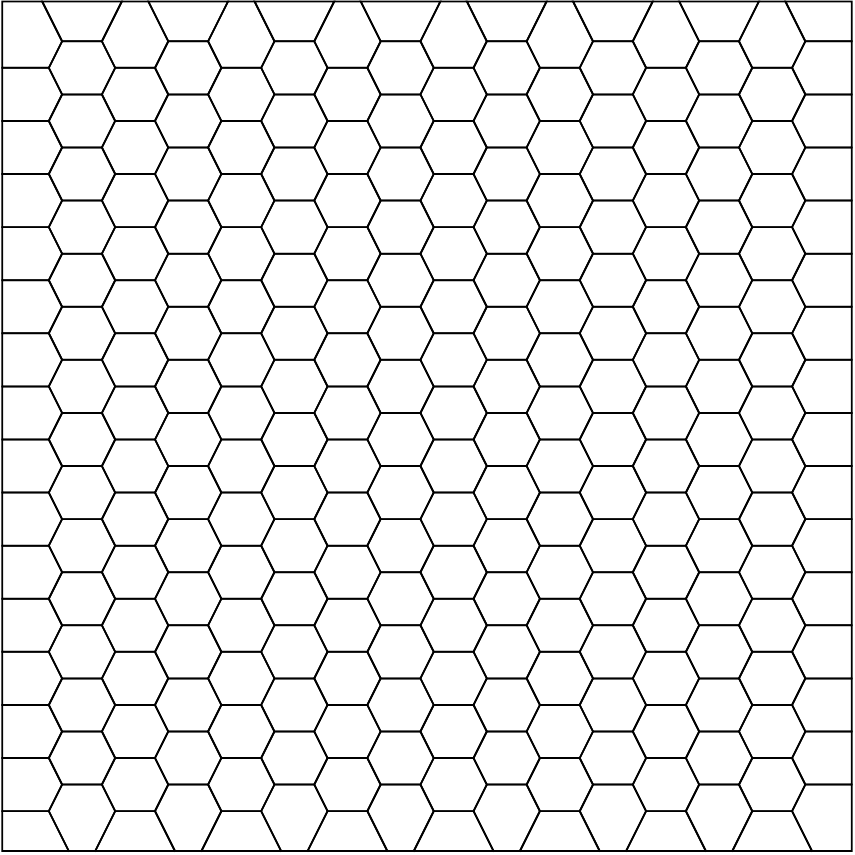}}
   \subcaptionbox{$\mathcal{T}_{h,2}$\label{fig:randquad_mesh}}
   {\includegraphics[width=0.32\textwidth]{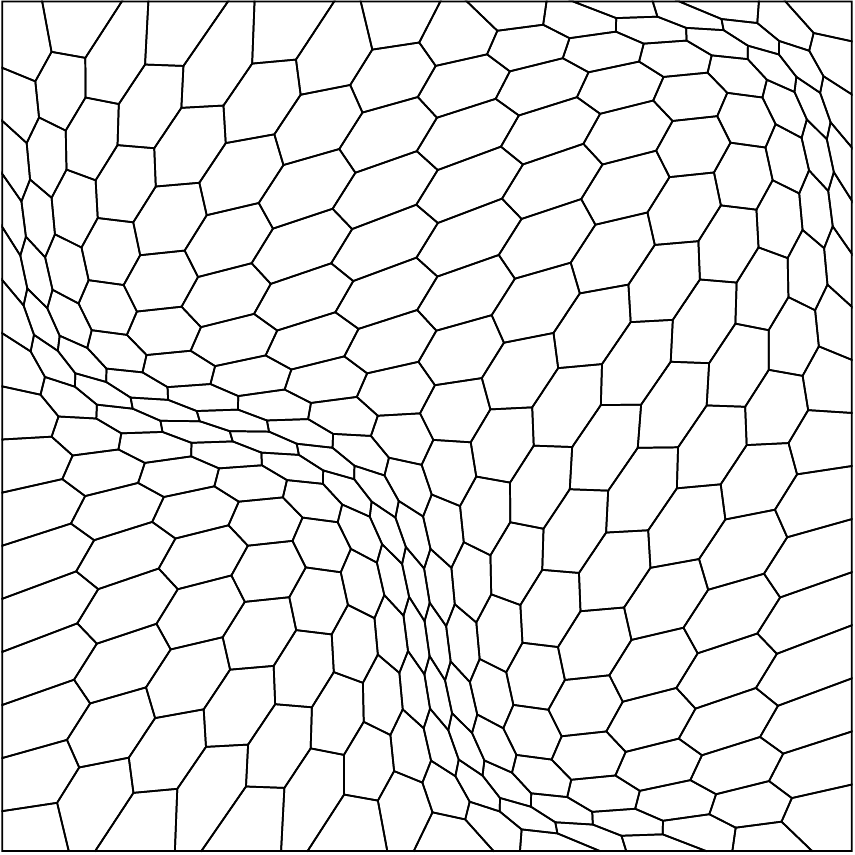}}
     \subcaptionbox{$\mathcal{T}_{h,3}$\label{fig:nonconvex_mesh}}
  {\includegraphics[width=0.32\textwidth]{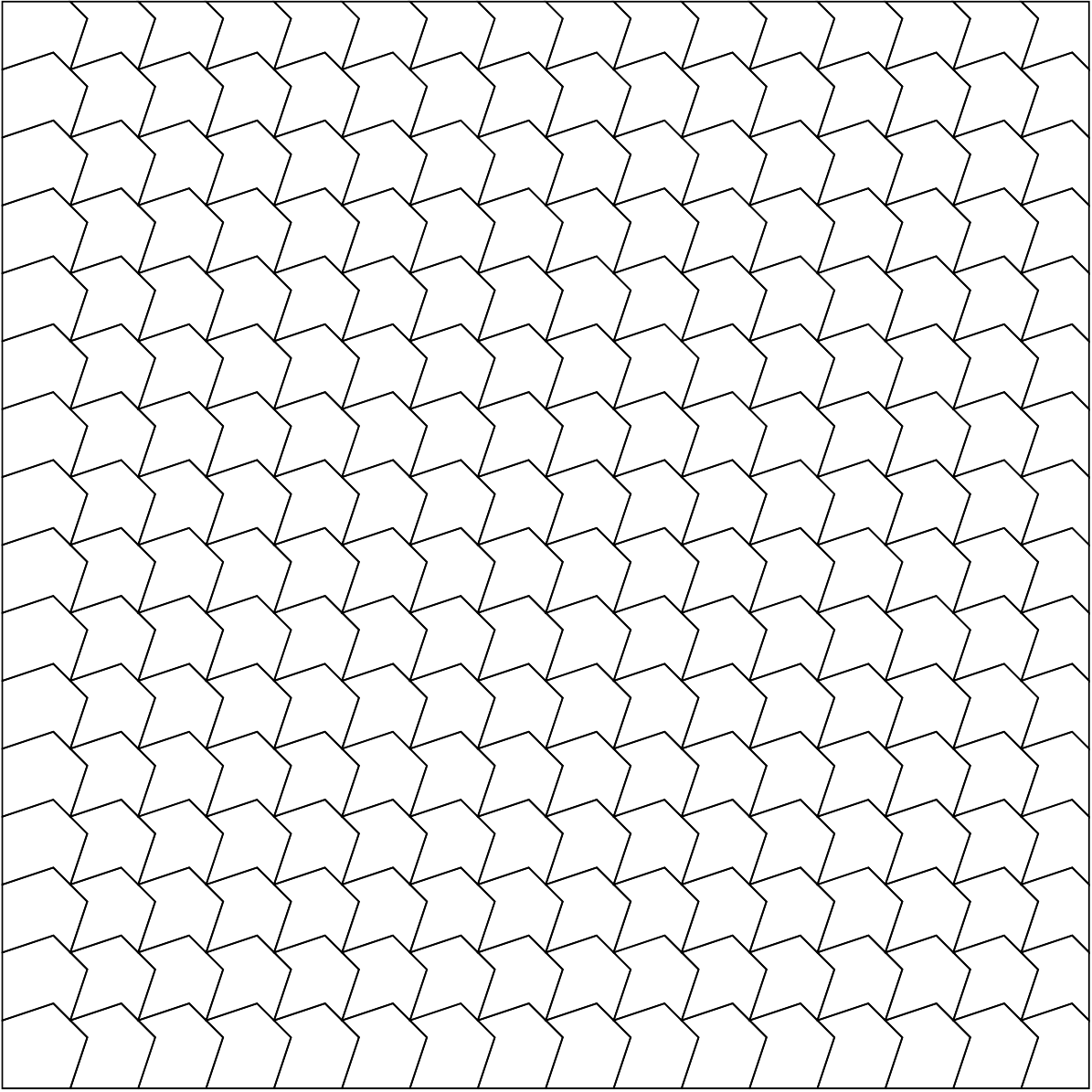}}
   \caption{Sample meshes for numerical tests: (a) structured hexagonal mesh;
    (b) transformed  hexagonal mesh; (c)  mesh with non-convex elements.}\label{fig:mesh}
\end{figure}

One of the merits of VEM is its ability  to use arbitrary polygonal meshes. To illustrate the generality of our theoretical results in term of the flexibility of VEM, we test our numerical example using three different types of polygonal meshes. 
The first level of each type of meshes  are plotted in  Figure \ref{fig:mesh}.  
The first type  of mesh $\mathcal{T}_{h,1}$ is uniform hexagonal mesh.
The second type of mesh $\mathcal{T}_{h,2}$ is generated by applying  the following coordinate transform
\begin{equation*}
\begin{split}
 x_1= \hat{x}_1+\frac{1}{10}\sin(2\pi \hat{x}_1)\sin(2\pi \hat{x}_2),\\
 x_2 = \hat{x}_2+\frac{1}{10}\sin(2\pi \hat{x}_1)\sin(2\pi \hat{x}_2),\\
\end{split}
\end{equation*}
to the uniform hexagonal  mesh $\mathcal{T}_{h,1}$.  
The third type  of mesh $\mathcal{T}_{h,3}$ is the uniform non-convex  mesh.

\subsection{Test case I: Smooth solution}  In this test, we consider the following exemplary equation with homogeneous Dirichlet boundary condition: 
\begin{equation}\label{equ:test}
-\Delta u = 2\pi^2\sin(\pi x_1)\sin(\pi x_2), \quad \text{ in  } \Omega = (0,1) \times (0,1).
\end{equation}
The exact solution is $u(\mathbf{x}) = \sin(\pi x_1)\sin(\pi x_2)$.

\begin{table}[htb!]
\centering
\caption{Numerical errors of test case I on structured hexagonal meshes }\label{tab:hex}
\resizebox{0.8\textwidth}{!}{
\begin{tabular}{|c|c|c|c|c|c|c|c|}
\hline 
Degree &N & $\|u-u^h\|_{L^{\infty}} $ & Order & $\|u-u^h\|_{W^{1,\infty}}$&Order \\ \hline
\multirow{5}{*}{k=1}  & 514 & 2.39e-03 & -- & 4.04e-01 & --  \\ \cline{2-6}
 & 2050 & 5.75e-04 & 2.06 & 2.05e-01 & 0.98  \\ \cline{2-6}
 & 8194 & 1.41e-04 & 2.03 & 1.03e-01 & 1.00  \\ \cline{2-6}
 & 32770 & 3.49e-05 & 2.02 & 5.13e-02 & 1.00  \\ \cline{2-6}
 & 131074 & 8.67e-06 & 2.01 & 2.57e-02 & 1.00  \\ \hline
\multirow{5}{*}{k=2}  & 514 & 7.72e-05 & -- & 2.38e-02 & --  \\ \cline{2-6}
 & 2050 & 9.93e-06 & 2.96 & 6.02e-03 & 1.99  \\ \cline{2-6}
 & 8194 & 1.26e-06 & 2.98 & 1.51e-03 & 2.00  \\ \cline{2-6}
 & 32770 & 1.58e-07 & 2.99 & 3.77e-04 & 2.00  \\ \cline{2-6}
 & 131074 & 1.98e-08 & 3.00 & 9.44e-05 & 2.00  \\ \hline
\multirow{5}{*}{k=3}  & 514 & 1.02e-05 & -- & 1.52e-03 & --  \\ \cline{2-6}
 & 2050 & 5.93e-07 & 4.11 & 1.94e-04 & 2.98  \\ \cline{2-6}
 & 8194 & 3.56e-08 & 4.06 & 2.44e-05 & 2.99  \\ \cline{2-6}
 & 32770 & 2.18e-09 & 4.03 & 3.05e-06 & 3.00  \\ \cline{2-6}
 & 131074 & 1.36e-10 & 4.00 & 3.82e-07 & 3.00  \\ \hline
\end{tabular}}
\end{table}

In the numerical test, we consider virtual element methods of degrees from 1 to 3. The numerical errors are documented in Table \ref{tab:hex} for the structured hexagonal meshes. What is striking in this table is the $\mathcal{O}(h^{k+1})$ optimal convergence rate for $L^{\infty}$ error and  $\mathcal{O}(h^{k})$ optimal convergence rate for $W^{1, \infty}$.  The observed convergence rates are consistent with the theoretical results predicted by Theorem \ref{thm:maxerror}.  Even though our theoretical results for $L^{\infty}$ error work for virtual element methods of degree $k\ge 2$, we can observe the optimal convergence results for the  linear virtual element method.

Let us now turn to the numerical results for the transformed hexagonal meshes, which is displayed in Table \ref{tab:brezzi}. Despite the unstructured nature of the mesh $\mathcal{T}_{h,2}$,  we can still observe the optimal maximal error, which demonstrates the Theorem \ref{thm:maxerror}.

For the non-convex meshes $\mathcal{T}_{h,3}$, we show the convergence history in Table \ref{tab:nonconvex}. Note that in the case the element is not always convex, which is not allowed in classical finite element methods.  Similar to the previous two tests, the same optimal convergence rates are observed as anticipated by the Theorem \ref{thm:maxerror}.

\begin{table}[htb!]
\centering
\caption{Numerical errors of test case I on transformed hexagonal meshes }\label{tab:brezzi}
\resizebox{0.8\textwidth}{!}{\normalfont
\begin{tabular}{|c|c|c|c|c|c|c|c|}
\hline 
Degree &N & $\|u-u^h\|_{L^{\infty}} $ & Order & $\|u-u^h\|_{W^{1,\infty}}$&Order \\ \hline
\multirow{5}{*}{k=1}  & 514 & 6.64e-03 & -- & 4.27e-01 & --  \\ \cline{2-6}
 & 2050 & 1.64e-03 & 2.02 & 2.12e-01 & 1.02  \\ \cline{2-6}
 & 8194 & 4.01e-04 & 2.03 & 1.05e-01 & 1.02  \\ \cline{2-6}
 & 32770 & 9.91e-05 & 2.02 & 5.18e-02 & 1.01  \\ \cline{2-6}
 & 131074 & 2.46e-05 & 2.01 & 2.58e-02 & 1.01  \\ \hline
\multirow{5}{*}{k=2}  & 514 & 3.12e-04 & -- & 3.98e-02 & --  \\ \cline{2-6}
 & 2050 & 4.12e-05 & 2.93 & 1.07e-02 & 1.89  \\ \cline{2-6}
 & 8194 & 5.09e-06 & 3.02 & 2.69e-03 & 2.00  \\ \cline{2-6}
 & 32770 & 6.22e-07 & 3.03 & 6.70e-04 & 2.00  \\ \cline{2-6}
 & 131074 & 7.66e-08 & 3.02 & 1.67e-04 & 2.00  \\ \hline
\multirow{5}{*}{k=3}  & 514 & 3.26e-05 & -- & 2.07e-03 & --  \\ \cline{2-6}
 & 2050 & 2.01e-06 & 4.03 & 2.89e-04 & 2.85  \\ \cline{2-6}
 & 8194 & 1.25e-07 & 4.01 & 3.59e-05 & 3.01  \\ \cline{2-6}
 & 32770 & 7.76e-09 & 4.01 & 4.48e-06 & 3.00  \\ \cline{2-6}
 & 131074 & 4.83e-10 & 4.01 & 5.57e-07 & 3.01  \\ \hline
\end{tabular}}
\end{table}

\begin{table}[htb!]
\centering
\caption{Numerical errors of test case I on  non-convex meshes}\label{tab:nonconvex}
\resizebox{0.8\textwidth}{!}{\normalfont
\begin{tabular}{|c|c|c|c|c|c|c|c|}
\hline 
Degree &N & $\|u-u^h\|_{L^{\infty}} $ & Order & $\|u-u^h\|_{W^{1,\infty}}$&Order \\ \hline
	\multirow{5}{*}{k=1} & 833 & 7.47e-03 & -- & 3.53e-01 & --  \\ \cline{2-6}
   & 3201 & 1.89e-03 & 2.04 & 1.79e-01 & 1.01  \\ \cline{2-6}
 & 12545 & 4.75e-04 & 2.02 & 8.98e-02 & 1.01  \\ \cline{2-6}
 & 49665 & 1.19e-04 & 2.01 & 4.50e-02 & 1.01  \\ \cline{2-6}
 & 197633 & 2.98e-05 & 2.01 & 2.25e-02 & 1.00  \\ \hline
\multirow{5}{*}{k=2} & 833 & 1.85e-04 & -- & 2.22e-02 & --  \\ \cline{2-6}
 & 3201 & 2.36e-05 & 3.06 & 5.63e-03 & 2.04  \\ \cline{2-6}
 & 12545 & 2.97e-06 & 3.03 & 1.41e-03 & 2.02  \\ \cline{2-6}
 & 49665 & 3.72e-07 & 3.02 & 3.54e-04 & 2.01  \\ \cline{2-6}
 & 197633 & 4.65e-08 & 3.01 & 8.85e-05 & 2.01  \\ \hline
 \multirow{5}{*}{k=3} & 833 & 1.35e-05 & -- & 1.36e-03 & --  \\ \cline{2-6}
 & 3201 & 8.04e-07 & 4.19 & 1.74e-04 & 3.05  \\ \cline{2-6}
 & 12545 & 4.92e-08 & 4.09 & 2.19e-05 & 3.04  \\ \cline{2-6}
 & 49665 & 3.08e-09 & 4.03 & 2.74e-06 & 3.02  \\ \cline{2-6}
 & 197633 & 1.94e-10 & 4.00 & 3.43e-07 & 3.01  \\ \hline
\end{tabular}}
\end{table}

\subsection{Test case II: Problem with a Gaussian surface} In this test, we consider the following Poisson equation
\begin{equation}\label{equ:test2}
-\Delta u = f, \quad \text{ in  } \Omega = (0,1) \times (0,1),
\end{equation}
with non-homogenous boundary condition $u|_{\partial\Omega}=g$. The right hand side  function $f(\mathbf{x})$  and the  boundary condition $g(\mathbf{x})$ can be calculated from the exact solution  $u(\mathbf{x}) = \exp(-\ell((x_1-0.5)^2 + (x_2-0.5)^2))$. When $\ell$ is large, the function $u$ has a rapidly varying gradient.

In this test, we select $\ell = 25$. The numerical errors on  non-convex meshes are displayed in Table \ref{tab:nonconvex_test2}. Similar to the previous test case, we can observe the optimal  convergence rates in maximal norm errors. But it requires a little bit finer meshes to observe the perfect optimal convergence rates since the rapidly varying gradient.  We also test it on other two polygonal meshes, which also gives us the same results.

\begin{table}[htb!]
\centering
\caption{Numerical errors of test case II on  non-convex meshes}\label{tab:nonconvex_test2}
\resizebox{0.8\textwidth}{!}{\normalfont
\begin{tabular}{|c|c|c|c|c|c|c|c|}
\hline 
Degree &N & $\|u-u^h\|_{L^{\infty}} $ & Order & $\|u-u^h\|_{W^{1,\infty}}$&Order \\ \hline
	\multirow{5}{*}{k=1} 
 & 833 & 2.80e-02 & -& 7.06e-01 & --\\ \cline{2-6}
 & 3201 & 7.54e-03 & 1.95 & 2.68e-01 & 1.44  \\ \cline{2-6}
 & 12545 & 1.92e-03 & 2.00 & 1.10e-01 & 1.31  \\ \cline{2-6}
 & 49665 & 4.83e-04 & 2.01 & 5.06e-02 & 1.13  \\ \cline{2-6}
 & 197633 & 1.21e-04 & 2.01 & 2.46e-02 & 1.04  \\ \hline
\multirow{5}{*}{k=2}
 & 833 & 1.08e-03 & -- & 1.94e-01 & --  \\ \cline{2-6}
 & 3201 & 1.41e-04 & 3.02 & 5.51e-02 & 1.87  \\ \cline{2-6}
 & 12545 & 1.78e-05 & 3.03 & 1.42e-02 & 1.98  \\ \cline{2-6}
 & 49665 & 2.23e-06 & 3.02 & 3.59e-03 & 2.00  \\ \cline{2-6}
 & 197633 & 2.79e-07 & 3.01 & 8.99e-04 & 2.00  \\ \hline
 \multirow{5}{*}{k=3}
 & 833 & 5.77e-04 & -- & 2.27e-02 & --  \\ \cline{2-6}
 & 3201 & 3.86e-05 & 4.02 & 3.34e-03 & 2.85  \\ \cline{2-6}
 & 12545 & 2.45e-06 & 4.03 & 4.37e-04 & 2.98  \\ \cline{2-6}
 & 49665 & 1.54e-07 & 4.02 & 5.52e-05 & 3.01  \\ \cline{2-6}
 & 197633 & 9.64e-09 & 4.01 & 6.92e-06 & 3.01  \\ \hline
\end{tabular}}
\end{table}

\section{Conclusion}
\label{sec:con}
In this paper,  we consider the error estimations in the maximum norm for virtual element methods. We establish the optimal maximum norm error estimations as to the classical numerical methods. In special, we show $\mathcal{O}(h^k)$ order convergence between the exact gradient and the gradient of the projection of virtual element solution for $k$th order virtual element methods.  When $k\ge 2$, we prove the optimal $\mathcal{O}(h^{k+1})$ order convergence for $L^{\infty}$ error.  We present a numerical example on both convex and non-convex general polygonal meshes to support our theoretical results. 


\section*{Acknowledgment}
 W. H. was partially supported by the National Natural Science Foundation of China under grants 11671304 and 11771338.
H. G. was partially supported by Andrew Sisson Fund of the University of Melbourne.\bibliographystyle{siamplain}
\bibliography{mybibfile}
\end{document}